\documentclass[
    leqno,
    11pt,
]{amsart}
\usepackage{amsmath}
\usepackage{amssymb}
\usepackage{amsthm}
\usepackage{aliascnt}
\usepackage[dvipsnames]{xcolor}
\usepackage{dsfont}
\usepackage{mathrsfs}
\usepackage{mathtools}
\usepackage{float}
\usepackage{tikz}
\usepackage{pgfplots}
\usepackage{hyperref}
\usepackage{nameref}
\usepackage{xspace}
\usepackage[
    nameinlink
]{cleveref}
\usepackage{autonum}
\usepackage{hyphenat}
\usepackage{microtype}
\usepackage{xfrac}

\RequirePackage{etex}

\hypersetup{
    colorlinks=true,
    linkcolor=blue,
    citecolor=orange,
    filecolor=magenta,
    urlcolor=blue,
    pdftitle={Gradual smoothing},
    }
\let\oldtocsection=\tocsection
\let\oldtocsubsection=\tocsubsection
\renewcommand{\tocsection}[2]{\hspace{0em}\oldtocsection{#1}{#2}}
\renewcommand{\tocsubsection}[2]{\hspace{1em}\oldtocsubsection{#1}{#2}}
\newtheorem{Theorem}{Theorem}[section]

\newaliascnt{Lemma}{Theorem}
\newtheorem{Lemma}[Lemma]{Lemma}
\aliascntresetthe{Lemma}

\newaliascnt{Proposition}{Theorem}
\newtheorem{Proposition}[Proposition]{Proposition}
\aliascntresetthe{Proposition}

\newaliascnt{Corollary}{Theorem}
\newtheorem{Corollary}[Corollary]{Corollary}
\aliascntresetthe{Corollary}

\newaliascnt{Definition}{Theorem}
\newtheorem{Definition}[Definition]{Definition}
\aliascntresetthe{Definition}

\newaliascnt{Remark}{Theorem}
\newtheorem{Remark}[Remark]{Remark}
\aliascntresetthe{Remark}

\newaliascnt{Example}{Theorem}

\aliascntresetthe{Example}

\crefname{Theorem}{Theorem}{Theorems}
\Crefname{Theorem}{Theorem}{Theorems}

\crefname{Lemma}{Lemma}{Lemmas}
\Crefname{Lemma}{Lemma}{Lemmas}

\crefname{Proposition}{Proposition}{Propositions}
\Crefname{Proposition}{Proposition}{Propositions}

\crefname{Corollary}{Corollary}{Corollaries}
\Crefname{Corollary}{Corollary}{Corollaries}

\crefname{Definition}{Definition}{Definitions}
\Crefname{Definition}{Definition}{Definitions}

\crefname{Remark}{Remark}{Remarks}
\Crefname{Remark}{Remark}{Remarks}

\crefname{Example}{Example}{Examples}
\Crefname{Example}{Example}{Examples}

\calclayout

\newcommand{\RR}{\mathbb{R}}
\newcommand{\cont}{\mathcal{C}}

\newcommand{\ent}{\textup{Ent}}

\renewcommand{\theequation}{\arabic{section}.\arabic{equation}}

\numberwithin{equation}{section}

\begin{document}
\title[Gradual smoothing]{Gradual smoothing: strong hypercontractivity and logarithmic Sobolev inequalities}
\date{}
\author[A.\,de Pablo, D.\,Lee,  F.\,Quirós and J.\,Ruiz-Cases]{Arturo de Pablo, David Lee, Fernando Quirós \& Jorge Ruiz-Cases}

\address{A.\,de Pablo.
    Departamento de Matem\'{a}ticas, Universidad Carlos III de Madrid, 28911-Legan\'{e}s, Spain
    \& Instituto de Ciencias Matem\'aticas ICMAT (CSIC-UAM-UC3M-UCM), 28049-Madrid, Spain.}
\email{arturop@math.uc3m.es}

\address{D.\,Lee.
    School of Mathematical and Physical Sciences, University of Technology Sydney (UTS), Australia.}
\email{davidchanwoo.lee@uts.edu.au}

\address{F.\,Quir\'os.
    Departamento de Matem\'{a}ticas, Universidad Aut\'onoma de Madrid,
    \& Instituto de Ciencias Matem\'{a}ticas ICMAT (CSIC-UAM-UCM-UC3M),
    28049-Madrid, Spain.}
\email{fernando.quiros@uam.es}

\address{J.\,Ruiz-Cases.
    Departamento de Matem\'{a}ticas, Universidad Aut\'onoma de Madrid,
    \& Instituto de Ciencias Matem\'{a}ticas ICMAT (CSIC-UAM-UCM-UC3M),
    28049-Madrid, Spain.}
\email{jorge.ruizc@uam.es}
\urladdr{https://sites.google.com/view/jorgeruizcases}

\begin{abstract}
    We study the possibility of a gradual improvement as time progresses of the regularity of solutions to evolution problems of parabolic type driven by Lévy-type operators, not necessarily translation invariant. In the course of our analysis we study the equivalence between general smoothing effects and a family of logarithmic Sobolev inequalities. This equivalence allows us to identify a new type of regularization, \emph{strong hypercontractivity}, characterized by the existence of a time at which solutions belong to every~$L^p$ space with~$p$ finite. It can also be used to prove logarithmic Sobolev inequalities in a context not previously seen in the literature. We then show that any purely nonlocal Lévy-type operator whose kernel is comparable to that of~$\log(I-\Delta)$ is strongly hypercontractive, but fails to be supercontractive and, consequently, also fails to be ultracontractive. Furthermore, in the translation-invariant case, we also prove that solutions get bounded eventually and start improving in differentiability right after doing so. Finally, we show that this behaviour only appears if the kernel defining the operator behaves as $|x-y|^{-N}$ for small interactions (\emph{$0^+$-order operators}): more singular kernels yield instantaneous smoothing, while less singular ones do not produce any regularization.
\end{abstract}

\subjclass[2020]{
    35R11, 
    35R09, 
    46E35, 
    47D07, 
    49J40. 
}

\keywords{Gradual smoothing, hypercontractivity, logarithmic Sobolev inequalities, Markov semigroup}

\maketitle

\setcounter{tocdepth}{2}
\tableofcontents

\section{Introduction}

\subsection{Motivation and goal}
We investigate the possibility of a gradual improvement, as time evolves, of the regularity of solutions to
\begin{equation}\label{problem-L} \tag{$\textup{P}_{\mathcal{L}}$}
    \partial_tu +\mathcal{L}u=0\quad\text{in }\mathbb{R}^N\times(0,\infty),\qquad u(\cdot,0)=u_0\quad\text{in }\mathbb{R}^N,
\end{equation}
for linear operators~$\mathcal{L}$ as general as possible. By \emph{gradual smoothing} of solutions we mean that solutions belong to progressively better integrability spaces as time evolves (\emph{hypercontractivity}), and once they become bounded, their differentiability also starts improving gradually. We will study the whole improving process, although we focus more on the first part of it: the gradual improvement in integrability. As part of our analysis, we prove that a general (quantitative) notion of hypercontractivity is equivalent to a family of logarithmic Sobolev inequalities. This characterization leads to the introduction of a new regularization phenomenon, which we call \emph{strong hypercontractivity}. 

Under minimal contractivity and positivity-preserving assumptions on the solutions of~\eqref{problem-L}, no generality is lost by restricting to Lévy operators. On the other hand, it is natural to consider only operators such that solutions to the equation conserve their mass. Finally, we exclude both second-order local terms, since uniformly elliptic diffusion yields instantaneous regularization, and local transport terms, since they have no influence on the smoothing properties of the equation. Hence, the class in which we are going to look for gradual smoothing is that of purely nonlocal self-adjoint Lévy-type operators, which have the form
\begin{equation}\label{general-L}
    \mathcal{L} f(x) = \text{P.V.} \int_{\RR^N} (f(x) - f(y))J(x,y)\, dy
\end{equation}
for some measurable, nonnegative  kernel~$J$ satisfying the symmetry condition~\eqref{eq:symmetry} and the Lévy condition~\eqref{eq:conditions.Levy.kernel}.  Among them lies the operator~$\log(I-\Delta)$, whose kernel behaves as~$|x-y|^{-N}$ for~$x\sim y$. It turns out that this singular property is what characterizes strong hypercontractivity within this family.

We could perform our analysis for Lévy-type operators~$\mathcal{L}$ defined by a measure instead of a kernel; we assume here, instead, that the defining measure is absolutely continuous with respect to Lebesgue's measure, and hence that it is given by a kernel, for the sake of simplicity.

In the probabilistic literature, operators of the form~\eqref{general-L} are called Lévy operators only when they are translation invariant. The more general operators we consider, although not translation invariant, share many properties with genuine Lévy operators and have attracted significant attention in recent years because their analysis presents additional mathematical challenges, partly due to the unavailability of Fourier-transform techniques. With a mild abuse of terminology, we refer to them simply as Lévy operators throughout the paper.

\noindent \textbf{Smoothing effects. }
The heat semigroup~$e^{-t(-\Delta)}$ satisfies an~$L^p$--$L^\infty$ smoothing effect for all~$p\geq 1$: there is a constant~$c_{N,p}$ such that
$$
    \|e^{-t(-\Delta)}u_0\|_\infty\le c_{N, p} t^{-\frac{N}{2p}}\|u_0\|_p \quad \text{for all } t>0.
$$
An analogous property holds for the semigroup~$e^{-t(-\Delta)^{\sigma/2}}$ associated to the fractional Laplacian, defined by
$$
    (-\Delta)^{\sigma/2} f(x) = C_{N,\sigma} \text{P.V.} \int_{\RR^N} \frac{f(x) - f(y)}{|x-y|^{N+\sigma}}\, dy,\quad 0<\sigma<2,
$$
which satisfies that for all~$p \geq 1$ there is a constant~$c_{N, p, \sigma}$ such that
$$
    \|e^{-t(-\Delta)^{\sigma/2}} u_0\|_\infty \leq c_{N, p, \sigma} t^{-\frac{N}{\sigma p}}\|u_0\|_p \quad \text{for all } t>0.
$$
Following the standard usage in the literature, we say that the semigroups~$e^{-t(-\Delta)}$ and~$e^{-t(-\Delta)^{\sigma/2}}$ are ultracontractive. With a slight abuse of terminology, we will also refer to their infinitesimal generators,~$-\Delta$ and
$(-\Delta)^{\sigma/2}$, as ultracontractive.

Let us now consider operators~$\mathcal{L}_0$ of the form~\eqref{general-L} that are: of \emph{order zero}, i.e., given by a kernel $J$  integrable in the second variable; and \emph{translation invariant}, i.e., with~$J(x,y)=\widetilde J(x-y)$ for some nonnegative symmetric function~$\widetilde J$. Let us fix~$\|\widetilde{J}\|_1=1$. These operators do not define an ultracontractive semigroup. Indeed, when~$\mathcal{L}=\mathcal{L}_0$, the solution to~\eqref{problem-L} is given by~$u(t) = e^{-t} u_0 + v(t)*u_0$ for some function~$v$, the so-called \emph{regular} part of the heat kernel; see for instance~\cite{ChasseigneChavesRossi2007}. Therefore,~$u(t)$ is only as good as the initial datum is; it does not gain integrability nor regularity.

These are the two expected behaviours for the evolution problem~\eqref{problem-L}: either solutions become instantaneously smooth, as happens for the Laplacian and fractional Laplacians, or they retain exactly the same level of regularity---or irregularity---as the initial data, as in the case of integrable kernels. These two regimes sit at opposite ends of the smoothing spectrum. A natural question then arises: are there operators with intermediate smoothing properties? To investigate this, we seek an example lying between these two extremes within the class of translation-invariant operators, a setting in which the Fourier transform becomes available as a powerful analytical tool.

\noindent \textbf{Translation-invariant operators. }
When~$\mathcal{L}$ is a translation invariant operator, the solution to the Cauchy problem~\eqref{problem-L} is given by the convolution in space of the initial datum with the fundamental solution of the problem,~${H}_t$. This special solution can be defined via the Fourier transform,
\begin{equation}\label{eq:definition.fundamental.solution}
    \widehat{{H}_t}(\xi)=e^{-tm(\xi)},
\end{equation}
where~$m$ is the Fourier symbol of~$\mathcal{L}$. In order to prove the ultracontractivity of the semigroup~$e^{-t\mathcal{L}}$ for such operators, it is enough to check that~$e^{-tm}\in L^1(\mathbb{R}^N)$ for all~$t>0$. It is reasonable to begin by restricting ourselves to radial, continuous symbols~$m$, requiring also~$m(0)=0$ to ensure mass conservation. Therefore, the integrability of~$e^{-tm}$ is determined by the behaviour of~$m(\xi)$ at infinity. In the above examples,~$\mathcal{L}=-\Delta,\,(-\Delta)^{\sigma/2},\,\mathcal{L}_0$, the Fourier symbols are, respectively,
$$
    m(\xi)=|\xi|^2,\, |\xi|^{\sigma},\, 1 -\widehat{J_0}(\xi).
$$
The first two grow polynomially, hence the integrability condition is satisfied. The third one is bounded, and the condition fails. A natural candidate for the intermediate situation we are looking for is a multiplier with logarithmic growth at infinity, for instance~$m(\xi) = \log(1 + |\xi|^2)$.  This particular choice leads to what will be our model case throughout this work, the operator~$\log(I-\Delta)$.

\noindent \textbf{The model case:~$\log(I-\Delta)$. Eventual ultracontractivity and hypercontractivity. }
Since the symbol of this model case is~$m(\xi)=\log(1+|\xi|^2)$, then
\begin{equation}
    \widehat{H_t}(\xi) = e^{-t\log(1 + |\xi|^2)} = (1+|\xi|^2)^{-t}\in L^1(\mathbb{R}^N)\quad \textup{if and only if } t>N/2.
\end{equation}
From this eventual boundedness of~$H_t$ we obtain what is called \emph{eventual ultracontractivity}: solutions become bounded after some finite time. Indeed,  
\begin{equation}
    \|e^{-t\log(I-\Delta)}u_0\|_\infty\le A_p(t) \|u_0\|_p \quad \text{for } p\geq 1,\, t>t_\infty=\frac{N}{2p},
\end{equation}
for some function~$A_p(t) > 0$. Up to time~$t_\infty$ we have~\emph{hypercontractivity}: 
a gradual gain of integrability. Furthermore, we will prove that~$H_t\in L^{\frac{N}{N-2t}, \infty}(\mathbb{R}^N)$ if~$t<N/2$, whence, by weak Young's inequality,
\begin{equation}
    \|e^{-t\log(I-\Delta)}u_0\|_{\frac{Np}{N-2pt}}\le A_p(t)\|u_0\|_p\quad \text{ for } p > 1,\, 0<t<\frac{N}{2p},
\end{equation}
so that any solution with initial data in~$L^p(\RR^N)$,~$p>1$, is at time~$t=t_\infty$ in every~$L^q(\RR^N)$,~$q\in[p,\infty)$. 
We will refer to this property as~\emph{strong hypercontractivity}. Despite its close connection to eventual ultracontractivity, this notion does not necessarily entail the latter for general operators. When the gain of integrability arises instantaneously, it is referred to as~\emph{supercontractivity}. Let us mention that solutions become as regular as desired, not just more integrable, for~$t$ big enough; see~\Cref{rk:higher.regularity}. 

It turns out that~$\log(I-\Delta)$ is a Lévy operator of the form~\eqref{general-L}. For this operator, and any other translation-invariant examples, we already mentioned a simple way to measure the differentiability, and hence the smoothing properties: the behaviour of its Fourier multiplier at infinity. How do we measure it when the Fourier transform is not available? If~$\mathcal{L}$ is a Lévy operator of the form~\eqref{general-L}, the answer comes through the singularity of the kernel.

\noindent \textbf{Singular kernels.} Observe that the infinitesimal generators~$(-\Delta)^{\sigma/2}$,~$\sigma\in(0,2)$, require a certain amount of regularity of a function~$f$ in order for~$(-\Delta)^{\sigma/2}f$ to be well defined. Indeed, the operator~$(-\Delta)^{\sigma/2}$ can be written in the form~\eqref{general-L}, with kernel
\[
J(x,y) = C_{N,\sigma}\,|x-y|^{-(N+\sigma)}, \qquad C_{N,\sigma}>0,
\]
so that~$f$ must at least belong to~$C^{\sigma+\varepsilon}(\mathbb{R}^N)$ for some~$\varepsilon>0$ to ensure that~$(-\Delta)^{\sigma/2}f$ is pointwise meaningful. In contrast, for zero-order translation-invariant operators~$\mathcal{L}_0$, no regularity is required from the functions they act upon for~$\mathcal{L}_0 f$ to be properly defined. The underlying intuition is that the more singular the kernel becomes at short distances, the more regular the solution must be in order to compensate for the singularity. This compensatory mechanism is precisely what leads to the smoothing effects observed in the aforementioned evolution equations.

The operator~$\log(I-\Delta)$ falls into the class of nonlocal operators of type~\eqref{general-L} with a kernel that behaves like~$|x-y|^{-N}$ when~$x$ and~$y$ are close. This places it in an intermediate position between the two previous cases, precisely at the threshold of integrability:~$|x|^{-N-\delta}$ is locally integrable if~$\delta<0$,  but not when~$\delta>0$.
We will show that all operators of the form~\eqref{general-L} whose kernels have the same critical singularity share smoothing properties analogous to those of~$\log(I-\Delta)$. This motivates the terminology \emph{$0^+$-order operators} to denote them; in the literature, they are also referred to as \emph{near-zero order} or \emph{geometrically stable} operators.

An alternative way to assess the differentiability of an operator, more natural within the functional‑analytic framework adopted here, is to quantify it through appropriate Sobolev inequalities.

\noindent\textbf{Sobolev and logarithmic Sobolev inequalities.} We define the Dirichlet form~$\mathcal{E}(\cdot, \cdot)$ associated to an operator of the form~\eqref{general-L} by
\begin{equation}
    \mathcal{E}(f,g) = \frac12\int_{\RR^N} \int_{\RR^N}(f(x) - f(y))(g(x) - g(y))J(x,y)\, dxdy.    
\end{equation}
If the kernel~$J$ is comparable to the one of the fractional Laplacian~$(-\Delta)^{\sigma/2}$,~$\sigma \in (0,2)$, and~$N>\sigma$, Hardy-Littlewood-Sobolev's inequality holds: there is a constant~$c>0$ such that
\begin{equation} \label{eq:sobolevembedding}
    \|f\|_{\frac{2N}{N-\sigma}}^2 \leq c\left(\|f\|_2^2 + \mathcal{E}(f,f)\right) \quad \text{for all }f\in H^{\sigma/2}(\mathbb{R}^N).
\end{equation}
The same inequality is true for the case~$\sigma=2$, corresponding to the operator~$-\Delta$, if~$N>2$, taking as Dirichlet form
\[
    \mathcal{E}(f,g) = \frac12\int_{\RR^N}\nabla f(x)\cdot\nabla g(x)\,dx.    
\]
Observe that the more singular the operator ---its singularity quantified by~$\sigma \in (0,2]$--- the greater the gain in integrability for functions with finite energy. In fact, it is widely known, see for instance~\cite{DaviesBook}, that Sobolev inequalities as~\eqref{eq:sobolevembedding} imply ultracontractivity.

For the case of zero-order operators, there is no gain in integrability. One can easily check that~$\mathcal{E}(f,f) \leq c \|f\|_2^2$ for any~$f \in L^2(\RR^N)$, whence no inequality of the type~\eqref{eq:sobolevembedding} can be available. 

What is then expected of operators like~$\log(I-\Delta)$? They are not ultracontractive, so the Sobolev inequality~\eqref{eq:sobolevembedding} cannot be satisfied. But we expect \emph{some} kind of Sobolev inequality. The answer lies again in an intermediate case: a logarithmic Sobolev inequality
\begin{equation}
 \|f^2\log(f^2)\|_1-\|f\|_2^2\log(\|f\|_2^2)    \leq C\|f\|_2^2 + D \mathcal{E}(f,f).
\end{equation}
However, one of the main insights of this paper is that the above inequality, while sufficient to characterize a certain form of hypercontractivity, as shown by Gross~\cite{GrossLogSobolev}, falls short of describing strong hypercontractivity. To recover such property, \emph{a family} of~$p$--logarithmic Sobolev inequalities of the form 
\begin{equation}
    \||f|^p\log(|f|^p)\|_1-\|f\|_p^p\log(\|f\|_p^p) \leq C(p)\|f\|_p^p + D(p) \mathcal{E}(|f|^{p-2}f,f)
\end{equation}
is required to hold for all $p\ge2$. The asymptotic behaviour of $C(p)$ and $D(p)$ for $p\to \infty$ is what will matter for strong hypercontractivity to appear.

There is no direct mechanism for transferring hypercontractivity from one operator to another. Logarithmic Sobolev inequalities, however, are energy estimates, and operators of the form~\eqref{general-L} with comparable kernels satisfy the same such estimates. Our first aim is to show that general hypercontractivity is equivalent to the existence of an appropriate family of logarithmic Sobolev inequalities of the above type. This reduces the problem: hypercontractivity for any $0^+$-order operator follows once it is proved for a single model in the class. We therefore focus on $\log(I-\Delta)$.

\subsection{Main results}

We start our work by defining the concepts of entropy, hypercontractivity and~$p$–logarithmic Sobolev inequalities. We next prove~\Cref{re:hypertolog}, showing that hypercontractivity for some~$p\geq2$ implies a corresponding~$p$–logarithmic Sobolev inequality. In~\Cref{re:logtohyper} we establish the reciprocal statement, thus proving the equivalence between hypercontractivity and logarithmic Sobolev inequalities. These two statements are, in essence, Gross’s results, suitably reformulated for our framework. Depending on the precise behaviour as~$p\to\infty$ of the coefficients in the family of~$p$–logarithmic Sobolev inequalities, we characterize strong hypercontractivity and provide sufficient conditions for eventual ultracontractivity. Through a duality argument, we extend hypercontractivity to the range~$p\in(1,2)$ in~\Cref{re:dualityresult}. Our approach also leads to supercontractivity and ultracontractivity results. Indeed, as a consequence of our equivalence theorems, we obtain a full characterization of supercontractivity and recover the classical sufficient conditions for ultracontractivity established in the literature. Fully understanding these equivalences between smoothing effects and logarithmic Sobolev inequalities represents the first main contribution of this work.

In order to exploit the previous results, we undertake a detailed analysis of the $0^+$-order operator~$\log(I-\Delta)$, which serves as an ideal model for the theorems above. In~\Cref{re:solutionforlog} we show that solutions to problem~\eqref{problem-L} involving this logarithmic operator gain regularity gradually: as time evolves,~$u(t)$ enters into increasingly more regular Sobolev-type spaces. By Sobolev embeddings, this improvement yields arbitrarily high regularity for sufficiently large times and, in particular, eventual ultracontractivity. A finer, quantitative study of hypercontractivity is carried out in~\Cref{re:loghypercontractivity}, which essentially reduces to a careful examination of the Sobolev embeddings  associated with the Bessel potential $(I-\Delta)^{-t}$ as $t\to 0$. Building on this groundwork, we then apply the equivalence results established earlier to prove in~\Cref{re:logdellog} the logarithmic Sobolev inequalities satisfied by~$\log(I-\Delta)$.

Although the operator~$\log(I-\Delta)$ has been previously considered in the literature, we have not found the associated logarithmic Sobolev inequalities obtained here stated anywhere else. To the best of our knowledge, they constitute the first example of \emph{essential} logarithmic Sobolev inequalities in an infinite measure space, in the sense that our operator does not satisfy any stronger Sobolev inequality of the form~\eqref{eq:sobolevembedding}. Furthermore, the logarithmic Sobolev inequality serves a different purpose here than in the previous literature.  It is typically used to obtain asymptotic results via entropy methods; in our setting, it provides the best energy estimate (Sobolev embedding) expected for $0^+$-order operators. Establishing these inequalities is the second main contribution of the paper.

Then, we examine general operators whose kernels satisfy~$J(x,y) \sim |x-y|^{-N}$ for~$x\sim y$. We prove that such operators enjoy strong hypercontractivity, but fail to be~supercontractive, and hence also ultracontractive. The proof relies essentially on the equivalence established earlier, together with a fine analysis of the model operator $\log(I-\Delta)$. The key idea is to transfer hypercontractivity via logarithmic Sobolev inequalities for operators with comparable kernels. We also show that the result is optimal: regularization is necessarily gradual. This smoothing effect for Lévy operators that need not be translation invariant is the third main contribution of the paper. 

Finally, we turn our attention to translation-invariant operators, whose kernels take the form $J(x,y)=\widetilde J(x-y)$,
with~$\widetilde J$ a symmetric function. We show in \Cref{re:commutationtheorem} that if $\widetilde{J}(z) \geq c|z|^{-N}$ for some $c>0$ and $|z| \ll 1$, then solutions to~\eqref{problem-L} for $\mathcal{L}$ defined by the kernel~$\widetilde{J}$ enter into increasingly more regular Sobolev-type spaces, just like the corresponding solutions for~$\log(I-\Delta)$. Thus, solutions improve first in integrability and then in differentiability after getting bounded. Finally, we show in~\Cref{re:summaryresult} that this gradual smoothing phenomenon only appears if the kernel defining the operator behaves as $|x-y|^{-N}$ for small interactions. Any radial perturbation making the kernel slightly more or less singular breaks this behaviour: if $\ell(|z|) = \widetilde{J}(|z|)|z|^N$ verifies~$\lim_{z \to 0} \ell(z) = \infty$, solutions get bounded and $\cont^\infty$ instantaneously, just as when~$\mathcal{L} = (-\Delta)^{\sigma/2}$ for~$\sigma \in (0,2]$; and if~$\lim_{z \to 0} \ell(z) = 0$, solutions remain only as regular as the initial data, just as when $\mathcal{L}$ is defined via an integrable kernel. These results in the translation-invariant setting constitute the fourth and final main contribution of the paper.

\subsection{Precedents} \label{subse:precedents}

A progressive improvement in the regularity of solutions is highly unusual for parabolic equations and, to the best of our knowledge, essentially new in the context of infinite measures such as the Lebesgue measure. The only example we have found in the literature of hypercontractivity with respect to the Lebesgue measure is precisely that of the operator~$\log(I-\Delta)$, a case treated via subordination theory, as will be discussed below. The logarithmic Sobolev inequalities established here are likewise new for operators defined on infinite measure spaces in situations where no Sobolev embedding is available.

Hypercontractivity was a known and widely studied phenomenon in the context of quantum mechanics. It already appeared for the Ornstein–Uhlenbeck semigroup~$e^{-t\mathcal{M}}$, defined by the Fokker-Plank equation (harmonic oscillator)
\[
    \partial_tu=-\mathcal{M} u:=\Delta u-x\cdot\nabla u.
\] 

Let~$d\gamma(x)=(4\pi)^{-N/2}e^{-|x|^2/4}\,dx$  denote the Gaussian measure. Nelson~\cite{NelsonFreeMarkoff} proved that 
\[
    \begin{gathered}
        \|e^{-t\mathcal{M}}f\|_{L^q(d\gamma)}\le \|f\|_{L^p(d\gamma)} \quad \text{for every } t\ge \frac12\log\left(\frac{q-1}{p-1}\right),\quad 1<p\le q<\infty,\quad\textup{where}\\
        \|f\|_{L^p(d\gamma)}=\left(\int_{\mathbb{R}^N}|f(x)|^p\,d\gamma(x)\right)^{1/p}.
    \end{gathered}
\]
Observe that this hypercontractivity estimate can be written in the form
\[
    \|e^{-t\mathcal{M}}f\|_{L^{q(t)}(d\gamma)}\le \|f\|_{L^p(d\gamma)}, \qquad q(t)=1+e^{2t}(p-1).
\]
Thus, solutions to the Fokker-Plank equation get better integrability with respect to the Gaussian measure as time advances, but they never become bounded.  Gross proved later in~\cite{GrossLogSobolev} that this particular hypercontractivity is equivalent to the Sobolev inequality of logarithmic type
\[
    \|f^2\log(f^2)\|_{L^1(d\gamma)}-\|f\|_{L^2(d\gamma)}^2\log(\|f\|_{L^2(d\gamma)}^2)   \leq C\|f\|^2_{L^2(d\gamma)}  + D\|\mathcal{M}^{1/2}f\|^2_{L^2(d\gamma)},
\]
where the left-hand side is the so-called \emph{entropy} of~$f$. This work was the first to establish the equivalence between hypercontractivity and a logarithmic Sobolev inequality.

That paper already includes what we denote here as~$p$--logarithmic Sobolev inequalities. When an operator~$\mathcal{L}$ satisfies a~$p$--logarithmic Sobolev inequality, Gross refers to it as a \emph{Sobolev generator of index~$p$}. However, in his work, these inequalities are used only as an intermediate technical tool for establishing Nelson's hypercontractivity, and the fundamental 
importance of this entire family of inequalities has until now gone unrecognized; see~\Cref{se:equivalence} for further details.

It is worth stressing that the hypercontractivity exhibited by the Ornstein–Uhlenbeck semigroup is \emph{intrinsic}: the operator is hypercontractive when measured with respect to its ground state, in this case the Gaussian function. A substantial portion of the literature on hypercontractivity, supercontractivity, and ultracontractivity focuses precisely on such intrinsic properties of semigroups. For a comprehensive treatment of intrinsic hypercontractivity, supercontractivity, and ultracontractivity, we refer the reader to~\cite{DaviesSimonIntrinsic}.

The contractivity properties of subordinate semigroups, particularly in contexts involving infinite measures such as Lebesgue's measure, have been extensively analysed in the literature. Such semigroups arise naturally within the framework of Bernstein functional calculus, see for instance the monograph~\cite{Schilling-Song-Vondracek'12}. A systematic treatment of eventual ultracontractivity within this framework was first established in~\cite[Theorem~3.1]{Bendikov-Coulhon-SaloffCoste'07}. In this theory, the operator~$\log(I-\Delta)$ appears naturally as the functional calculus realization of the mapping~$s \mapsto \log(1+s)$ applied to the positive operator~$-\Delta$. This theory also allows to establish functional inequalities of Nash or super-Poincar\'e type, like the ones obtained in~\cite{GentilMaheuxNash}. They are then used to characterize hypercontractivity in~\cite[Proposition~13]{Schilling-Wang'12}. Nevertheless, whilst there is a direct relation between Nash, super-Poincar\'e inequalities and ultracontractivity, it seems that they are not sufficient if one wants to prove eventual ultracontractivity; see~\Cref{se:extensions} for a more detailed explanation.

For the Laplacian with respect to the Lebesgue measure, logarithmic Sobolev inequalities are well understood, and the optimal constant is now known; see, for  instance,~\cite{DelPinoDolbeault1,DelPinoDolbeault2,WeisslerLog}. Several developments have also been made in the nonlocal setting; for instance, logarithmic Sobolev inequalities for the fractional Laplacian have been established in~\cite{ChatzakouRuzhanskyFracLog, VivekSahuFraclog, XiaoSharp}. It is important to note that in all these works the logarithmic Sobolev inequality is not essential, in the sense that a stronger (fractional) Sobolev inequality also holds. We also refer to~\cite{Wang-Wang'13}, where the authors develop the fractional analogue of the results for the Fokker–Planck equation

One of the main sources of inspiration for the present work is~\cite{ArturoCristinaRegularizingEffect}, where it is shown that, for translation-invariant Lévy operators, the threshold for ultracontractivity occurs precisely when the function~$\widetilde J$ defining the kernel satisfies
$\widetilde{J}(z) \sim|z|^{-N}$ near the origin. Further related developments include the analysis in~\cite{ArturoCorreaNearZero} of the energy space associated with $0^+$-order Lévy operators yielding, among other results, a Hardy inequality and an embedding into a Lorentz space; and the proofs in~\cite{jarohs2025continuitysolutionsequationsweakly,KassmannMimicaIntrinsicScaling} that solutions to a related elliptic problem with Dirichlet conditions are bounded and continuous.

Finally, it is worth mentioning that the operator $\log(I-\Delta)$ can be understood as the following limit of fractional relativistic Schrödinger operators $(I-\Delta)^{\alpha}-I$,
\begin{equation}
    \log(I-\Delta) = \lim_{\alpha \to 0^+} \frac{(I-\Delta)^{\alpha} -I}{\alpha}.
\end{equation}
These operators are well known and widely studied; see, for example,~\cite{fall_sharp_2014, fall_unique_2015,frank_hardy-lieb-thirring_2007,herbst_spectral_1977,LiebLossbook,roncal_carleman_2023} for a representative (though not exhaustive) list of references.
Observe that $\log(I-\Delta)$ can also be seen as the derivative at~$\alpha=0$ of the Bessel operators~$(I-\Delta)^{\alpha}$. We do not make use of these two limits to obtain information about~$\log(I-\Delta)$. However, what will be of importance is that $-\log(I-\Delta)$ is the infinitesimal generator associated to the Bessel potential~$(I-\Delta)^{-t}$ semigroup. 

\subsection{Outline of the paper}

In~\Cref{se:preliminaries} we introduce several necessary concepts and recall some useful tools already available in the literature.~\Cref{se:equivalence} is devoted to proving the equivalence between smoothing effects and families of logarithmic Sobolev inequalities. In~\Cref{se:logoperator} we focus on the model operator~$\log(I-\Delta)$, applying the equivalence established above to obtain a new family of logarithmic Sobolev inequalities.~\Cref{se:generaloperators} addresses general operators, making extensive use of the results from the preceding sections. In~\Cref{se:convolution} we establish eventual ultracontractivity under the additional assumption of translation invariance, and we identify the borderline behaviour~$|z|^{-N}$ at the origin as the threshold for this gradual regularization phenomenon. Finally,~\Cref{se:extensions} contains additional comments and possible extensions. For completeness, an~\hyperref[se:existence]{Appendix} summarizes the relevant existence theory.

\section{Preliminaries} \label{se:preliminaries}

\subsection{Lévy operators}
We consider purely nonlocal Lévy operators of the form~\eqref{general-L}, defined by a nonnegative Lévy kernel~$J$ satisfying
\begin{gather}
    \label{eq:symmetry}
    J(x,y) = J(y,x)\quad\text{for all } (x,y) \in \RR^{2N} \setminus \{x = y\}, \\
    \label{eq:conditions.Levy.kernel}
       \sup_{x\in\mathbb{R}^N}\int_{\mathbb{R}^N}J(x,y)\min\{1,|x-y|^2\}\,dy<\infty.
\end{gather}
It has an associated Dirichlet form,
\begin{equation}
    \mathcal{E}(f,g) = \langle \mathcal{L}f, g\rangle = \frac12\int_{\RR^N} \int_{\RR^N}(f(x) - f(y))(g(x) - g(y))J(x,y)\, dxdy,
\end{equation}
which defines the energy~$\overline{\mathcal{E}}(f)=\mathcal{E}(f,f)$, and the Sobolev space
\begin{equation}
    H_{\mathcal{L}}(\RR^N)=\{f\in L^2(\RR^N)\,:\, \overline{\mathcal{E}}(f)<\infty\},
\end{equation}
with norm
\begin{equation}
    \|f\|_{H_{\mathcal{L}}}=\|f\|_2+\overline{\mathcal{E}}^{1/2}(f).
\end{equation}
For the special case~$\mathcal{L}=(-\Delta)^{\sigma/2}$,~$H_\mathcal{L}(\RR^N)$ reduces to the standard fractional Sobolev space~$H^{\sigma/2}(\RR^N)$.

\subsection{Concept of solution}

Recall that our aim is to study the Cauchy problem~\eqref{problem-L}. Let us define the concept of solution used throughout this work, sometimes called \emph{mild solution}.

\begin{Definition} \label{def:L2solutions}
    Given~$u_0\in L^2(\RR^N)$,  we say that~$u$ is a (mild) solution to problem~\eqref{problem-L} if
    \begin{equation}
        u \in \cont([0,\infty); L^2(\RR^N)) \cap \cont^1((0,\infty); L^2(\RR^N)) \cap \cont((0,\infty); H_\mathcal{L}(\RR^N))
    \end{equation}
    and satisfies the abstract ODE
    \begin{equation}
        \dfrac{du}{dt}+\mathcal{L}u =0\quad\text{in }(0,\infty), \qquad u(0) = u_0.
    \end{equation}
\end{Definition}

These solutions are also clearly weak solutions in the standard sense. Indeed, if we multiply the equation by a smooth function~$\phi$, compactly supported in~$\RR^N\times(0,\infty)$, and then integrate by parts, we find that~$u$ satisfies
\begin{equation}
    \int_0^\infty\int_{\mathbb{R}^N}u(x,t)\partial_t\phi(x,t)\,dxdt=\int_0^\infty\mathcal{E}(u,\phi)(t)\,dt.
\end{equation}

For~$\mathcal{L}$ a Lévy operator, if the initial datum belongs to~$L^1(\RR^N)\cap L^2(\RR^N)$,  then~$\|u(t)\|_1 \leq \|u_0\|_1$ and~$u\in\cont([0,\infty); L^1(\RR^N))$. If~$u_0 \in L^p(\RR^N) \cap  L^2(\RR^N)$ for some~$p \in (1, \infty]$, then we also have~$\|u(t)\|_p \leq \|u_0\|_p$. For nonnegative initial data~$u_0\geq 0$, we have that~$u(t) \geq 0$. These properties imply that the solution operator~$e^{-t\mathcal{L}} \colon L^2(\RR^N) \to L^2(\RR^N)$ defined as~${e^{-t\mathcal{L}}u_0 \coloneq u(t)}$ is a \emph{symmetric Markov semigroup}, which is the natural framework in which to tackle the equivalence question of~\Cref{se:equivalence}.

\begin{Definition}
    We say that the solution operator~${e^{-t\mathcal{L}}\colon L^2(\RR^N) \to L^2(\RR^N)}$ is a \emph{symmetric Markov semigroup} if it is a contraction in~$L^\infty(\RR^N)$ and preserves  positivity for all~$t\geq 0$. 
\end{Definition}

For initial data just in~$L^p(\RR^N)$, the notion of weak solution is not the appropriate one, since it is not possible to prove that the energy of a mild solution is bounded. However, as long as the Lévy kernel of the operator is of the form~$J(x,y) = \widetilde{J}(x-y)$, or~$J(x,y)$ is less singular than~$|x-y|^{-(N+1)}$ for~$x\sim y$, the pointwise value~$\mathcal{L}f(x)$ is well defined for~$f$ smooth and bounded. Thus, for~$u_0 \in L^p(\RR^N)$, 
we use an approximation by~$L^2\cap L^p$ mild solutions and the estimate~$\|u(t)\|_p \leq \|u_0\|_p$ to pass to the limit and show that there exists a \emph{very weak solution}; that is, a solution~$u(t)$ with initial data~$u_0$ in~$L^p(\RR^N)$ satisfying
\[
    -\int_0^\infty\int_{\mathbb{R}^N}u\partial_t\varphi+\int_0^\infty\int_{\mathbb{R}^N}u\mathcal{L}\varphi=0\quad\text{for all }\varphi\in C^\infty_{\textup{c}}(\mathbb{R}^N\times(0,\infty)).
\]

In the case of translation-invariant operators, the convolution~$u(t) = {H}_t*u_0$, with~${H}_t$ as in~\eqref{eq:definition.fundamental.solution} is well defined for any initial data~$u_0 \in L^p(\RR^N)$,~$p \in [1, \infty]$. 
When~$p=2$, this convolution coincides with the mild solution defined above for the operators considered here. The only operator mentioned in this paper where both notions of solution may not coincide for~$p=2$ is~$\mathcal{L} = \log(-\Delta)$. We explain this with more care at the end of~\Cref{se:logoperator}.

There is no loss of generality in considering only nonnegative solutions. For the general case we make the decomposition~$u_0 = (u_0)_+ - (u_0)_-$ and consider the nonnegative solutions~$u^+(t)$ and~$u^-(t)$ associated respectively to the initial data~$(u_0)_+$ and~$(u_0)_-$ (remember that if~$u_0 \geq 0$, then~$u(t) \geq 0$ for all~$t\geq 0$). Then,  due to linearity,~$u(t) = u^+(t) - u^-(t)$, so we can deduce the behaviour of~$u(t)$ from that of~$u^+(t)$ and~$u^-(t)$. For more information about the existence of solutions and their basic properties see the~\hyperref[se:existence]{Appendix}.

\subsection{Weak Lebesgue spaces and weak Young's inequality}
We define the \emph{weak Lebesgue space}~$L^{p,\infty}(\RR^N)$ (also known as the \emph{Marcinkiewicz space}) as the set of measurable functions~$f$ defined in $\RR^N$ such that
\begin{equation}
    \|f\|_{p,\infty} = \sup\left(\lambda \mu\{x \in \RR^N: |f(x)|>\lambda\}^{1/p} \right) < \infty, 
\end{equation}
where $\mu(A)$ denotes the measure of the set $A$. The functional $\|\cdot\|_{p,\infty}$ defines a quasinorm in this space. Weak Lebesgue spaces are heavily used in this work combined with \emph{weak Young's inequality}, which we state below.  

The usual \emph{Young's inequality} states that, for $p,q,r \geq 1$ satisfying 
\begin{equation} \label{eq:youngcondition}
    1+ \frac 1r = \frac 1p + \frac 1q,
\end{equation}
and $f \in L^p(\RR^N)$, $g \in L^q(\RR^N)$, the convolution $f*g$ verifies
\begin{equation}
    \|f*g\|_r \leq \|f\|_p \|g\|_q.
\end{equation}
If $p,q,r > 1$, this inequality can be extended to \emph{weak Young's inequality}
\begin{equation}
    \|f*g\|_r \leq c \|f\|_{p,\infty} \|g\|_q
\end{equation}
valid for all $f \in L^{p,\infty}(\RR^N)$, $g \in L^q(\RR^N)$, where $c>0$ is a constant depending on the exponents and the dimension. A natural question arises in this context: is weak Young's inequality optimal? That is, assume that for any $r>q>1$ there exists some $C>0$ such that
\begin{equation}
    \|f*g\|_r \leq C \|g\|_q
\end{equation}
for any~$g \in L^q(\RR^N)$. Does this imply that~$f$ belongs to $L^{p,\infty}(\RR^N)$ for the corresponding $p$ satisfying~\eqref{eq:youngcondition}? The result trivially holds under the assumption that $r=\infty$, see~\Cref{re:translationequivalence2}. Then, we will prove in~\Cref{re:optimalweak} that this statement also holds if $f$ is a nonnegative radial function, nonincreasing in $|x|$. We believe that this result is likely available in the literature, although we have been unable to find a reference. We consider this lemma to be of independent interest.

\subsection{Bessel potentials spaces}
Let us consider the Sobolev space~$W^{k,p}(\RR^N)$, defined as the set of functions in~$L^p(\RR^N)$ with weak derivatives up to order~$k \in \mathbb{N}$ also in~$L^p(\RR^N)$. There are several ways to extend this definition to a fractional framework. A natural possibility is to interpolate between the integer~$W^{k,p}(\RR^N)$ spaces: real interpolation yields the fractional Sobolev spaces~$W^{\sigma,p}(\mathbb{R}^N)$; interpolating through the complex method gives the so-called \emph{Bessel potential spaces}, denoted by~$L^p_\sigma(\RR^N)$. These two interpolations coincide in the Hilbert case, namely~$L^2_\sigma(\RR^N) = W^{\sigma,2}(\RR^N)$, which is commonly denoted by~$H^\sigma(\RR^N)$. We will focus on the Bessel potential spaces. The name comes from an equivalent definition that we state below.
\begin{Definition}
    Let~$p>1$ and~$\sigma \geq 0$. We define the \emph{Bessel potential spaces}~$L^p_\sigma(\RR^N)$ as
    \begin{equation}
        L^p_\sigma(\RR^N) = \{f \in L^p(\RR^N): (I-\Delta)^{\sigma/2} f \in L^p(\RR^N) \},
    \end{equation}
    where~$(I-\Delta)^{\sigma/2}$ is the \emph{Bessel operator}, defined as
    \begin{equation}
        \big[(I-\Delta)^{\sigma/2}f\big]\,\widehat{\;}\,(\xi) =  (1+|\xi|^2)^{\sigma/2} \widehat f.
    \end{equation}
    The space~$L^p_0(\RR^N)$ is understood as~$L^p(\RR^N)$.
\end{Definition}
Although these spaces can also be defined for~$p=1$, this case is usually left out in the literature since the subsequent theory can be difficult to deal with. In these instances the space~$L^1(\RR^N)$ is often replaced by the corresponding real Hardy space. 

Let us now define the Bessel potentials, inverse of the Bessel operators.

\begin{Definition}
    Let~$p>1$ and~$\sigma > 0$. We define the \emph{Bessel potential}~$(I-\Delta)^{-\sigma/2}$ as
    \begin{equation}
        \big[(I-\Delta)^{-\sigma/2}f\big]\,\widehat{\;}\,(\xi) =  (1+|\xi|^2)^{-\sigma/2} \widehat f.
    \end{equation}
\end{Definition}

Observe that if~$f\in L^p(\RR^N)$, then~$(I-\Delta)^{-\sigma/2}f\in L^p_\sigma(\RR^N)$. 

We also need to define fractional Laplacians and their inverses, the Riesz potentials.

\begin{Definition}
    Let~$\sigma > 0$. We define the \emph{fractional Laplacian}~$(-\Delta)^{\sigma/2}$ as
    \begin{equation}
        \big[(-\Delta)^{-\sigma/2}f\big]\,\widehat{\;}\,(\xi) = |\xi|^{\sigma} \widehat f
    \end{equation}
    for~$f$ a good enough function. We define the \emph{Riesz potential} as its inverse, that is,
    \begin{equation}
        \big[(-\Delta)^{-\sigma/2}f\big]\,\widehat{\;}\,(\xi) = |\xi|^{-\sigma}  \widehat f.
    \end{equation}
\end{Definition}

For more information about the Riesz and Bessel potentials, see for  instance~\cite{BesselII,Adamsbook,AdamsFourierbook,BesselI,GrafakosModern,Steinbook}.

\section{Hypercontractivity and logarithmic Sobolev inequalities} \label{se:equivalence}

Throughout this section, all integrals and~$L^p$ norms could be understood as defined on~$(X,d\mu)$, where~$d\mu$ is a Borel measure on a locally compact, second countable Hausdorff space~$X$. However, for simplicity, we take~$X = \mathbb{R}^N$ and~$\mu = dx$, the Lebesgue measure. A very natural framework for this section is the one of symmetric Markov operators~$e^{-t\mathcal{L}}$, which is the case for all semigroups having as infinitesimal generators operators~$\mathcal{L}$ like the ones considered here.

Let us rigorously define the different versions of \emph{hypercontractivity} and \emph{ultracontractivity} we will be dealing with. We may talk indistinctly about an operator~$\mathcal{L}$ or the associated semigroup~$e^{-t\mathcal{L}}$.

\begin{Definition} \label{contractivity2}
    Let~$\mathcal{L}$ be an operator and let~$u(t)=u(\cdot,t)$ be the solution to problem~\eqref{problem-L}. Then~$\mathcal{L}$ (the  semigroup~$e^{-t\mathcal{L}}$) is:
    \begin{itemize}
        \item \emph{$p$--hypercontractive} if there exist functions~$q:[0, t_\infty)\to[p,\infty)$ and $A_p:[0,t_\infty) \to [0,\infty)$ for some~$t_\infty \in (0, \infty]$ and $p\in[1,\infty)$, such that 
            \begin{equation}\label{def:hypercontractive}
                \|u(t)\|_{q(t)}\le A_p(t)\|u_0\|_p \quad\text{for } 0 \leq t<t_\infty,
            \end{equation}
        with $q$ nondecreasing and satisfying~$q(0)=p$,~$\lim\limits_{t \to t_\infty} q(t) = \infty$, and $A_p(0) \geq 1$.
        \item \emph{Strongly~$p$--hypercontractive} if it is~$p$--hypercontractive with~$t_\infty<\infty$.
        \item\emph{Supercontractive} if there exists a function~$B(t,q) \geq 0$ such that 
        \begin{equation} \label{def:supercontractive}
            \|u(t)\|_{q}\le B(t,q)\|u_0\|_2 \quad\text{for every } 2 \leq q < \infty \text{ and } t>0.
        \end{equation}
        \item \emph{Ultracontractive} if it is supercontractive and~$B(t) \coloneq \sup_q B(t,q) < \infty$ for all~$t>0$ so that in particular
        \begin{equation} \label{def:ultracontractive}
                \|u(t)\|_{\infty}\le B(t)\|u_0\|_2 \quad\text{for } t>0.
        \end{equation}
        \item \emph{Eventually ultracontractive} if it satisfies~\eqref{def:ultracontractive} for all~$t\geq t_0$ for some~$0<t_0<\infty$.
    \end{itemize}
\end{Definition}

\begin{Remark}
    There is a reason to talk about~$p$--hypercontractivity and not about~$p$--supercontractivity or~$p$--ultracontractivity. In the usual instances found in the literature, two~$p$--hypercontractivities with two different values of~$p$ imply each other quantitatively, since they are both equivalent to the standard~$2$--logarithmic Sobolev inequality. However, to show strong hypercontractivity we need a family of \emph{independent}~$p$--logarithmic Sobolev inequalities; therefore, the~$p$--hypercontractivities are also expected to be quantitatively independent; see the discussion below. The case of supercontractivity and ultracontractivity is different, since through an interpolation and iteration argument in~\eqref{def:supercontractive} and~\eqref{def:ultracontractive}, standard in the literature, one could substitute the~$2$--norm by any~$p$--norm.
\end{Remark}
\begin{Remark}
    The concept of \emph{eventual supercontractivity} can be defined in a completely analogous way to eventual ultracontractivity. At first glance, one might think that strong hypercontractivity and eventual supercontractivity coincide. However, there is an important distinction: the gradual, quantitative nature of strong hypercontractivity, which may be absent in eventual supercontractivity.
\end{Remark}

In order to define the logarithmic Sobolev inequalities properly, let us first define the~\emph{entropy} of a function.

\begin{Definition}
    Given a measurable function~$f$, we define its \emph{entropy} as
    \begin{equation} \label{eq:entropy}
        \ent(f)=\int_{\RR^N} \Phi(|f|) - \Phi\left(\int_{\RR^N}|f|\right), \quad \text{where }\Phi(s)=s\log s.
    \end{equation}
\end{Definition}
Observe that Jensen's inequality is not available, and thus we cannot ensure nonnegativity of the entropy; however, this is of no importance for this work. We recall next the standard logarithmic Sobolev inequality that gives a control of the entropy of~$f^2$ in terms of the energy of~$f$.

\begin{Definition}
    We say that the operator~$\mathcal{L}$ satisfies a standard \emph{logarithmic Sobolev inequality} if there exist constants~$C \in \RR$ and~$D>0$ such that
    \begin{equation}\label{eq:log-sob-ineq}
        \ent(f^2)\leq C\|f\|_2^2 + D \overline{\mathcal{E}}(f)\quad\text{for every }f\in H_{\mathcal{L}}(\RR^N).
    \end{equation}
\end{Definition}

The equivalence between some smoothing effects and logarithmic Sobolev inequalities has been known for some time. For example, assume that the inequality
\begin{equation}
    \ent(f^2)\leq 2\beta(\varepsilon)\|f\|_2^2 + 2\varepsilon \overline{\mathcal{E}}(f)
\end{equation}
holds for all~$\varepsilon>0$, and~$\beta$ a monotonically decreasing continuous function such that for all~$t>0$,
\begin{equation}
    M(t) = \frac 1t \int_0^t \beta(\varepsilon) \, d\varepsilon < \infty.
\end{equation}
Then the operator~$\mathcal{L}$  is ultracontractive and solutions to~\eqref{problem-L} satisfy
\begin{equation}
    \|u(t)\|_\infty \leq e^{M(t)} \|u_0\|_2 \quad \text{for all } t>0.
\end{equation}
The reciprocal implication is also true; see~\cite{DaviesBook} for more information. It is also known that satisfying the logarithmic Sobolev inequality~\eqref{eq:log-sob-ineq} is equivalent to the following quantitative hypercontractivity property,
\begin{equation}\label{eq:hypercontractivity.Davies}
    \begin{gathered}
        \|u(t)\|_{q(t)} \leq e^{M_p(t)} \|u_0\|_p,\quad\text{with}\\
        q(t) = (p-1) e^{t/D} + 1, \qquad M_p(t) = C \left(\frac 1p + \frac 1{q(t)} \right);
    \end{gathered}
\end{equation}
see~\cite{BarkyBook,GrossLogSobolev}.

The function~$q$ in~\eqref{eq:hypercontractivity.Davies} is monotonically increasing and unbounded. Notice that, no matter the choice of~$C$ and~$D$, we cannot force~$q$ to blow up in finite time. This is a limitation of the technique, hidden in the fact that we are using \emph{only one} logarithmic Sobolev inequality. To overcome it, we are going to require a family of~$p$--logarithmic Sobolev inequalities. 

\begin{Definition}
    Given~$p\ge2$, we say that~$\mathcal{L}$ satisfies a \emph{$p$--logarithmic Sobolev inequality} if  there exist constants~$C(p), D(p)$, with~$D(p)>0$, such that
    \begin{equation}\label{eq:entropy-ineq}
        \ent(|f|^p) \leq C(p) \|f\|_p^p + D(p) \mathcal{E}(|f|^{p-2}f,f)\quad\textup{for every }|f|^{p/2}\in H_{\mathcal{L}}(\RR^N).
    \end{equation}
\end{Definition}

Observe that the~$2$--logarithmic Sobolev inequality reduces to the standard logarithmic Sobolev inequality. The fact that a family of~$p$--logarithmic Sobolev inequalities is useful is already present in the proofs of ultracontractivity and hypercontractivity performed respectively in~\cite{DaviesBook} and~\cite{GrossLogSobolev}, although these families only appear as an intermediate step in the proofs. We claim that having \emph{a family} of independent logarithmic Sobolev inequalities, and not just one, is essential when studying hypercontractivity and ultracontractivity issues, since this allows for much richer behaviours.

Recall Stroock-Varopoulos' inequality,
\begin{equation}
    \overline{\mathcal{E}}(|f|^{p/2}) \leq \frac{p^2}{4(p-1)}  \mathcal{E}(|f|^{p-2}f,f);
\end{equation}
see~\cite{Stroock-1984,Varopoulos-1985}. Combining this with the~$2$--logarithmic Sobolev inequality~\eqref{eq:log-sob-ineq} with~$f$ replaced by~$|f|^{p/2}$, we get
\begin{equation} \label{eq:badlogfamily}
    \ent(|f|^p) \leq C(2) \|f\|_p^p + \frac{D(2)p^2}{4(p-1)} \mathcal{E}(|f|^{p-2}f,f).
\end{equation}
One could use this family of~$p$--logarithmic inequalities and the method explained below to obtain the hypercontractivity result~\eqref{eq:hypercontractivity.Davies}. This is a very particular case in which all the~$p$--logarithmic Sobolev inequalities span from the same one. But in doing this, we are limiting ourselves to a certain family of~$p$--logarithmic Sobolev inequalities, and hence to a particular type of hypercontractivity.

\begin{Remark}
    All~$p$--logarithmic Sobolev inequalities of the form
    \begin{equation}
        \ent(|f|^p) \leq C \|f\|_p^p + \frac{D p^2}{4(p-1)} \mathcal{E}(|f|^{p-2}f,f)
    \end{equation}
    with~$C$ and~$D$ independent of~$p$ are equivalent for \emph{diffusion operators}, that is, operators satisfying
    \begin{equation}
        \mathcal{L}(\Psi(f))=\Psi'(f)\mathcal{L}(f)+\Psi''(f)\Gamma(f),
    \end{equation}
    where~$\Gamma$ is the \emph{carré du champ} (essentially the density) of the Dirichlet form~$\overline{\mathcal{E}}$; see~\cite{BarkyBook}. Nonlocal operators lack this property, which is a major differentiating factor between them and local operators.
\end{Remark}

Let now~$u$ be a solution to~\eqref{problem-L}. Recall that we may assume that~$u\ge0$, and we will do so in the sequel. Also, all the integrals are understood to be over~$\RR^N$. Differentiating~$\displaystyle\int u^{q(t)}(t)$, we formally obtain
\begin{equation} \label{eq:logdiff}
    \frac{d}{dt} \int u^{q(t)}(t)=q'(t)\int u^{q(t)}(t)\log(u(t))+q(t)\int u^{q(t)-1}(t)\partial_t u(t).
\end{equation}
Using that~$u$ is a solution and the definition of entropy~\eqref{eq:entropy}, we can rewrite this identity as
\begin{equation} \label{eq:diff}
    \frac{d}{dt} \int u^{q(t)}(t)  + q(t) \int u^{q(t)-1} \mathcal{L}u(t)  = \frac{q'(t)}{q(t)} \left\{ \text{Ent}(u^{q(t)}(t)) + \left(\int u^{q(t)}(t)\right)  \log \left( \int u^{q(t)}(t)  \right) \right\},
\end{equation}
which is the starting point for the results that follow. To justify these computations, we need the next auxiliary results. The first two can be found in~\cite{DaviesBook}. We add here the proofs for completeness.

\begin{Lemma} \label{re:teclemma}
    Let~$q:[0, t_\infty)\to[2, \infty)$, for some~$t_\infty\in(0,\infty]$, be a~$\cont^1$ function, and let~$u\ge0$ be the solution of~\eqref{problem-L} with~$u_0 \in L^1(\RR^N) \cap L^\infty(\RR^N)$. Then~$\partial_t u(t) u(t)^{q(t)-1}$ and~$u(t)^{q(t)} \log(u(t))$ belong to~$\cont([0, \infty); L^1(\RR^N))$.
\end{Lemma}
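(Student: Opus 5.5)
The plan is to combine the a priori bounds $\|u(t)\|_r\le\|u_0\|_r$ for $r\in[1,\infty]$ with the semigroup regularity from \Cref{def:L2solutions}, and then use dominated convergence. Set $M=\|u_0\|_\infty$. By the Markov property, $0\le u(t)\le M$ for all $t\ge0$, and $\|u(t)\|_1\le\|u_0\|_1$. Note that a $\cont^1$ map $q$ on $[0,t_\infty)$ is bounded on compact subintervals, so near any $t_0$ we may assume $2\le q(t)\le Q$. (On $[t_\infty,\infty)$, where $q$ is not defined, the statement should be read only for $t<t_\infty$; I will argue on $[0,t_\infty)$.)

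\textbf{The term $u^{q}\log u$.} The function $\phi(s,r)=s^r\log s$, extended by $0$ at $s=0$, is continuous on $[0,M]\times[2,Q]$. It also satisfies $|\phi(s,r)|\le C_{M,Q}\,s$ there, since $s^{r-1}|\log s|$ is bounded for $r\ge2$ and $s\le M$. This gives $u(t)^{q(t)}\log u(t)\in L^1$.

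For continuity, take $t_n\to t_0$. Because $u\in\cont([0,\infty);L^1)$, a subsequence satisfies $u(t_n)\to u(t_0)$ a.e. and $u(t_n)\le g$ for some $g\in L^1$, by the partial converse of dominated convergence. Then
\[
|\phi(u(t_n),q(t_n))|\le C g,
\]
and $\phi(u(t_n),q(t_n))\to\phi(u(t_0),q(t_0))$ a.e. Dominated convergence gives $L^1$ convergence along the subsequence. Since every sequence has such a subsequence, the whole family converges. The $L^1$ continuity of $u$ follows from $u_0\in L^1\cap L^2$, as recalled before \Cref{contractivity2}.

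\textbf{The term $\partial_t u\,u^{q-1}$.} For $t>0$, $\partial_tu(t)=-\mathcal{L}u(t)$ lies in $L^2$ and is continuous in $t$ with values in $L^2$ by \Cref{def:L2solutions}. Moreover, $u(t)^{q(t)-1}\in L^2$, since
\[
u^{q-1}\le M^{q-2}u \qquad\text{and}\qquad u\in L^2 .
\]
Hölder's inequality then places the product in $L^1$. For continuity, write
\[
\partial_tu(t_n)u(t_n)^{q_n-1}-\partial_tu(t_0)u(t_0)^{q_0-1}
=(\partial_tu(t_n)-\partial_tu(t_0))u(t_n)^{q_n-1}+\partial_tu(t_0)\big(u(t_n)^{q_n-1}-u(t_0)^{q_0-1}\big).
\]
The first term is controlled by the $L^2$ continuity of $\partial_tu$ together with the uniform bound $\|u(t_n)^{q_n-1}\|_2\le M^{Q-2}\|u_0\|_2$. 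For the second term, the map $r\mapsto u^{r-1}$ is handled exactly as in the previous step, now in $L^2$, with domination by $C g_2$ where $g_2\in L^2$ dominates a subsequence of $u(t_n)$ in $L^2$.

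\textbf{The obstacle at $t=0$.} The main difficulty is the endpoint $t=0$, where $\partial_tu$ need not belong to $L^2$ unless $u_0$ lies in the domain of $\mathcal{L}$. My first attempt would be to use the standing assumption, stated before \Cref{def:L2solutions}, that the kernel satisfies \eqref{eq:conditions.Levy.kernel}. If that did not settle it, I would interpret the claim on $(0,\infty)$ and observe that only integrability near $t=0$ is needed later, for instance through $\int_0^t$ in \eqref{eq:diff}, which suffices for the energy identities. Alternatively, one can first prove the result for $u_0$ in the domain of $\mathcal{L}$, where $\partial_tu=-e^{-t\mathcal{L}}\mathcal{L}u_0$ is continuous on $[0,\infty)$ in $L^2$, and then approximate.
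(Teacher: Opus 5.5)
Your proof is correct, but you get the continuity by a different mechanism from the paper's.

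The paper avoids subsequence arguments altogether. Using $0\le u\le\|u_0\|_\infty$ and $q\ge2$, it applies the mean value theorem separately in the base and in the exponent to obtain pointwise Lipschitz bounds,
\[
|u(s_1)^{q(s_1)-1}-u(s_2)^{q(s_2)-1}|\le c_1|u(s_1)-u(s_2)|+c_2|q(s_1)-q(s_2)|\,u(s_2)^{1/2},
\]
together with the analogous bound for $u^{q}\log u$, with $u(s_2)$ in place of $u(s_2)^{1/2}$. It then takes $L^2$, respectively $L^1$, norms, using $\|u(s)\|_1\le\|u_0\|_1$ and the $L^2$ and $L^1$ continuity of $u$. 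The product term follows from the $L^2$ continuity of $\partial_tu$.

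You instead extract a.e.\ convergent subsequences dominated by a fixed $L^1$ (or $L^2$) function and apply dominated convergence. This needs only continuity of $(s,r)\mapsto s^{r}\log s$ and $s^{r-1}$ on $[0,M]\times[2,Q]$, plus the linear growth bound $|\phi(s,r)|\le Cs$. The paper's route is shorter and gives an explicit modulus of continuity in terms of $\|u(s_1)-u(s_2)\|$ and $|q(s_1)-q(s_2)|$. Yours is purely qualitative but does not use differentiability of the nonlinearity. A minor correction: your uniform bound should read $\|u(t)^{q(t)-1}\|_2\le\max\{1,M\}^{Q-2}\|u_0\|_2$.

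On the endpoint, you are right that \Cref{def:L2solutions} only gives $\partial_tu\in\cont((0,\infty);L^2(\RR^N))$. The paper's proof invokes this continuity ``by hypothesis'' and does not address $t=0$, nor the range $t\ge t_\infty$ where $q$ is undefined.

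Two of your suggested remedies do not fix this:
\begin{itemize}
\item The L\'evy condition does not place $L^1\cap L^\infty$ data in $D(\mathcal{L})$.
\item Approximation by data in $D(\mathcal{L})$ only transfers integrated identities, since $\partial_tu(0)$ need not exist for general $u_0$.
\end{itemize}
Reading the claim on the open interval is the right fix. Concretely, $u^{q(t)}(t)\log u(t)$ is continuous in $L^1$ on $[0,t_\infty)$. The product $\partial_tu\,u^{q-1}$ is continuous on $(0,t_\infty)$, and on $[0,t_\infty)$ when $u_0\in D(\mathcal{L})$, since in that case Hille--Yosida gives $u\in\cont^1([0,\infty);L^2(\RR^N))$.
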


\begin{proof}
    By hypothesis,~$\partial_t u(t)$ is continuous with values in~$L^2$, so we need to check that~$u^{q(t)-1}(t)$ is also continuous with values in~$L^2$. Since~$\|u(s)\|_\infty\le \|u_0\|_\infty$ for all~$s\ge0$,
    \begin{equation}
        \begin{aligned}
            |u^{q(s_1)-1}(s_1) - u^{q(s_2)-1}(s_2)| &\leq |u^{q(s_1)-1}(s_1) - u^{q(s_1)-1}(s_2)|+ |u^{q(s_1)-1}(s_2) - u^{q(s_2)-1}(s_2)| \\
            &\leq c_1 |u(s_1) - u(s_2)| + c_2 |q(s_1) - q(s_2)| \, |u(s_2)|^{1/2},
        \end{aligned}
    \end{equation}
    whence the desired continuity in~$L^2(\mathbb{R}^N)$, since
    \begin{equation}
        \begin{aligned}
                \|u(s_1)^{q(s_1)-1} - u(s_2)^{q(s_2)-1}\|_2 &\leq c_1 \|u(s_1) - u(s_2)\|_2 +  c_2 |q(s_1) - q(s_2)| \, \|u(s_2)\|^{1/2}_1\\
                &\leq c_1 \|u(s_1) - u(s_2)\|_2 +  c_2 |q(s_1) - q(s_2)| \, \|u_0\|^{1/2}_1.
        \end{aligned}
    \end{equation}
    To show that~$u^{q(t)}(t) \log(u(t))$ is continuous in~$L^1$ we proceed similarly,
    \begin{equation}
        \begin{aligned}
            |u(s_1)^{q(s_1)}\log(u(s_1)) - u(s_2)^{q(s_2)}\log(u(s_2))|&\leq |u(s_1)^{q(s_1)}\log(u(s_1)) - u(s_2)^{q(s_1)}\log(u(s_2))| \\
            &\quad+ |u(s_2)^{q(s_1)}\log(u(s_2)) - u(s_2)^{q(s_2)}\log(u(s_2))| \\
            &\leq c_1 |u(s_1) - u(s_2)| + c_2 |q(s_1) - q(s_2)| \, |u(s_2)|,
        \end{aligned}
    \end{equation}
    whence
    \begin{equation}
        \begin{aligned}
            \|u(s_1)^{q(s_1)}\log(u(s_1)) - u(s_2)^{q(s_2)}\log(u(s_2))\|_1 &\leq c_1 \|u(s_1) - u(s_2)\|_1 + c_2 |q(s_1) - q(s_2)| \, \|u(s_2)\|_1 \\
            &\leq c_1 \|u(s_1) - u(s_2)\|_1 + c_2 |q(s_1) - q(s_2)| \, \|u_0\|_1,
        \end{aligned}
    \end{equation}
    where we used the~$L^1$-continuity of the solution.
\end{proof}
\begin{Corollary}
    Let~$u\ge0$ be a solution to problem~\eqref{problem-L}. Then,~\eqref{eq:logdiff} is satisfied.
\end{Corollary}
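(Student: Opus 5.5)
The plan is to obtain the corollary from \Cref{re:teclemma} through the integrated form of the chain rule, working first with the nice data for which the lemma applies. Assume first that $u_0\in L^1(\RR^N)\cap L^\infty(\RR^N)$, $u_0\ge0$, and that $q$ is $\cont^1$ with values in $[2,\infty)$. I will prove the integrated identity
\begin{equation}
    \int u^{q(t_2)}(t_2)-\int u^{q(t_1)}(t_1)=\int_{t_1}^{t_2}\Big(q'(s)\int u^{q(s)}\log u(s)+q(s)\int u^{q(s)-1}\partial_s u(s)\Big)\,ds .
\end{equation}
By \Cref{re:teclemma} the integrand on the right is continuous in $s$. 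Hence the integrated identity shows that $t\mapsto\int u^{q(t)}(t)$ is $\cont^1$ and that \eqref{eq:logdiff} holds pointwise.

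For the integrated identity, I will use difference quotients. Fix $s$ and a small $h$, and split
\begin{equation}
    u^{q(s+h)}(s+h)-u^{q(s)}(s)=\big[u^{q(s+h)}(s+h)-u^{q(s+h)}(s)\big]+\big[u^{q(s+h)}(s)-u^{q(s)}(s)\big].
\end{equation}
For the second bracket, apply the mean value theorem in the exponent. Pointwise this gives $u^{\theta}(s)\log u(s)\,(q(s+h)-q(s))$ with $\theta$ between the two exponents. Since $\theta\ge 2$, $u\le\|u_0\|_\infty$ and $u\in L^1$, we get $|u^\theta\log u|\le c\,u$, so dominated convergence yields the limit $q'(s)\int u^{q(s)}\log u(s)$.

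For the first bracket, write $a^r-b^r=r\xi^{r-1}(a-b)$ with $\xi$ between $a=u(s+h)$ and $b=u(s)$. Then $(a-b)/h\to\partial_s u(s)$ in $L^2$, since $u\in\cont^1((0,\infty);L^2)$. Also $\xi^{r-1}\to u^{q(s)-1}(s)$ in $L^2$: the exponents stay at least $1$, everything is bounded in $L^\infty$ by $\|u_0\|_\infty$, and $u(s+h)\to u(s)$ in $L^1\cap L^2$. This is the same Lipschitz-type estimate as in the proof of the lemma, and the Hölder pairing then gives the limit $q(s)\int u^{q(s)-1}\partial_s u(s)$. Combining both brackets, the right derivative exists and is continuous, which gives the integrated identity. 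The endpoint $t=0$ is handled by one-sided limits, using continuity at $0$ from \Cref{re:teclemma}.

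The main obstacle is extending beyond data in $L^1\cap L^\infty$. For general nonnegative $u_0$ I would approximate by truncations $u_{0,n}=\min\{u_0,n\}\mathds{1}_{B_n}$ and pass to the limit in the integrated identity. On the left this uses the $L^p$-contraction of the semigroup. On the right it needs monotone or dominated convergence for the entropy-type terms, and these may be infinite, in which case \eqref{eq:logdiff} should be read as an identity in the extended sense. Since the results that follow only apply \eqref{eq:logdiff} to such regular data and then argue by density, I would state the corollary rigorously for $u_0\in L^1\cap L^\infty$. I would treat the general case formally through this approximation.
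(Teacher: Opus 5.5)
Your proof is correct, but it is organized differently from the paper's. The paper writes the chain rule $\frac{d}{dt}u^{q(t)}(t)=q(t)u^{q(t)-1}\partial_t u+q'(t)u^{q(t)}\log u$ for smooth $u$ and integrates it over $(0,t)$. It then transfers the integrated identity to the actual solution by approximating with functions that are continuous and piecewise linear in $s$. The only analytic input is the $L^1$-continuity statements of \Cref{re:teclemma}, and differentiating back gives \eqref{eq:logdiff} in $L^1$.

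You instead differentiate directly. You split the increment into a change of $u$ at frozen exponent and a change of exponent at frozen $u$, apply the mean value theorem to each, and pass to the limit by the $L^2\times L^2$ H\"older pairing and dominated convergence. This removes the approximation step, whose details the paper leaves implicit, and gives the $L^1$-valued derivative of $t\mapsto u^{q(t)}(t)$ for $t>0$ directly. The cost is that you redo, for difference quotients, the Lipschitz-type bounds that the paper packages once in \Cref{re:teclemma}. Your restriction to $u_0\in L^1\cap L^\infty$ matches what the paper actually uses. Its proof also rests on \Cref{re:teclemma}, which carries that hypothesis, and the corollary is only applied to such data before a density argument.

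One minor point concerns the endpoint. The continuity at $t=0$ that you borrow from \Cref{re:teclemma} is only justified for $u^{q}\log u$. For $\partial_t u\,u^{q-1}$ it would need $\partial_t u$ to be continuous at $0$ in $L^2$. A mild solution need not have this unless $u_0\in D(\mathcal{L})$, since the definition only gives $\partial_t u\in\cont((0,\infty);L^2)$. So pass to $t_1\to0$ in your integrated identity using the $L^1$-continuity of $u^{q(t)}(t)$ at $t=0$, which follows from the same estimates, and read the right-hand side as an improper integral. This does not affect \eqref{eq:logdiff} for $t>0$.
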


\begin{proof}
    If~$u$ is a smooth function, then
    \begin{equation}\label{eq:differential.version}
        \frac{d}{dt} u^{q(t)}(t)  = q(t)\partial_t u(t) u^{q(t)-1}(t)  + q'(t) u^{q(t)}(t) \log(u(t)).
    \end{equation}
    Integrating in~$(0,t)$ we get
    \begin{equation}\label{eq:integral.version}
        u^{q(t)}(t)  = u_0^p+ \int_0^t \left(q(s)u^{q(s)-1}(s)\partial_t u(s)+ q'(s) u^{q(s)}(s) \log(u(s))\right)\,ds.
    \end{equation}
    An approximation argument using continuous piecewise linear functions of $s$ and~\Cref{re:teclemma} shows that~\eqref{eq:integral.version} is also true in~$L^1$. Hence,~\eqref{eq:differential.version} holds in~$L^1$.
\end{proof}
For the next result we need the following energy lemma for the composition, whose proof follows directly by using the Mean Value Theorem.
\begin{Lemma} \label{re:betterfunction}
    Let~$f \in L^\infty(\RR^N) \cap H_\mathcal{L}(\RR^N)$ and~$\varphi\in \cont^1(\RR)$ such that~$\varphi(0) = 0$. Then~$\varphi(f) \in H_\mathcal{L}(\RR^N)$. In particular,~$|f|^q\in H_\mathcal{L}(\RR^N)$ for any~$q\geq1$.
\end{Lemma}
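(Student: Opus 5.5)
The idea is to show that a $C^1$ function vanishing at $0$ is Lipschitz on the range of $f$, so the Dirichlet energy of $\varphi(f)$ is controlled by that of $f$, while its $L^2$ norm is controlled by $\|f\|_2$.

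Set $M=\|f\|_\infty$ and $L=\max_{|s|\le M}|\varphi'(s)|$. This maximum is finite because $\varphi'$ is continuous on the compact interval $[-M,M]$.

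For almost every $x,y$ both $f(x)$ and $f(y)$ lie in $[-M,M]$. The Mean Value Theorem then gives a point $\xi$ between them with
\[
|\varphi(f(x))-\varphi(f(y))| = |\varphi'(\xi)|\,|f(x)-f(y)| \le L\,|f(x)-f(y)|.
\]

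\textbf{Energy bound.} Squaring this inequality, multiplying by $J(x,y)\ge 0$ and integrating over $\RR^{2N}$ gives $\overline{\mathcal{E}}(\varphi(f))\le L^2\,\overline{\mathcal{E}}(f)<\infty$.

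\textbf{$L^2$ bound.} Take $y$ with $f(y)=0$ in the same estimate, using $\varphi(0)=0$. This gives $|\varphi(f(x))|\le L|f(x)|$ pointwise, hence $\|\varphi(f)\|_2\le L\|f\|_2<\infty$.

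Together the two bounds give $\varphi(f)\in H_\mathcal{L}(\RR^N)$.

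\textbf{The case $|f|^q$.} For $q>1$ the function $\varphi(s)=|s|^q$ is $C^1$ with $\varphi(0)=0$, so the first part applies directly.

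For $q=1$ the function $s\mapsto |s|$ is not $C^1$, so the first part does not apply. In this case we argue directly instead. The reverse triangle inequality $\big||f(x)|-|f(y)|\big|\le |f(x)-f(y)|$ gives $\overline{\mathcal{E}}(|f|)\le\overline{\mathcal{E}}(f)$, and clearly $\||f|\|_2=\|f\|_2$.

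The only point needing care is that $\varphi$ is merely $C^1$ on $\RR$. This is handled by the boundedness of $f$, which confines every Mean Value Theorem point to the compact interval $[-M,M]$ where $|\varphi'|\le L$.
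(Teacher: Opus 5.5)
Your proof is correct and is the Mean Value Theorem argument the paper has in mind; the paper gives no detail beyond naming it. One wording fix: for the $L^2$ bound, apply the Lipschitz estimate on $[-M,M]$ directly to the pair $f(x),0$ rather than choosing a $y$ with $f(y)=0$, which need not exist. Your separate treatment of $q=1$ via $\big||a|-|b|\big|\le|a-b|$ is a good catch, since $s\mapsto|s|$ is not $\cont^1$ and the paper's ``in particular'' glosses over this case.
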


We are now ready to prove the first main result of this paper.

\begin{Theorem}\label{re:hypertolog}
    Let~$\mathcal{L}$ be~$p$--hypercontractive for some~$p\geq 2$, satisfying~\eqref{def:hypercontractive} with~$q$ an increasing~$\cont^1$ function defined on~$[0, t_\infty)$,~$t_\infty \in (0, \infty]$, with~$q'(0) > 0$, and
    \begin{equation}
        A_p(0)=\lim_{h\to0^+}A_p(h)=1,\qquad A'_p(0)=\lim_{h\to0^+}\frac{A_p(h)-1}{h}<\infty.
    \end{equation}
    Then the~$p$--logarithmic Sobolev inequality~\eqref{eq:entropy-ineq} holds for any~$f \in L^1(\RR^N) \cap L^p(\RR^N) \cap H_\mathcal{L}(\RR^N)$ with
    \begin{equation}
        C(p)=\frac{p^2A_p'(0)}{q'(0)},\qquad D(p)=\frac{p^2}{q'(0)}.
    \end{equation}
\end{Theorem}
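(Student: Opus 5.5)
We differentiate the hypercontractivity estimate at $t=0$, where it becomes an equality, following Gross. Fix $f\ge0$ (the general case follows by replacing $f$ with $|f|$; see below) with $f\in L^1\cap L^p\cap H_\mathcal{L}$, and first assume additionally $f\in L^\infty$. Let $u(t)=e^{-t\mathcal{L}}f$. Define
\[
F(t)=\|u(t)\|_{q(t)}^{q(t)}\qquad\text{and}\qquad G(t)=A_p(t)^{q(t)}\|f\|_p^{q(t)} .
\]
By $p$--hypercontractivity, $F(t)\le G(t)$ for $t\in[0,t_\infty)$. Since $q(0)=p$ and $A_p(0)=1$, we also have $F(0)=\|f\|_p^p=G(0)$. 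Hence
\[
\limsup_{h\to0^+}\frac{F(h)-F(0)}{h}\le \liminf_{h\to 0^+}\frac{G(h)-G(0)}{h}.
\]

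Next we compute both one-sided derivatives at $0$. For $F$, the Corollary above gives \eqref{eq:logdiff}/\eqref{eq:diff} in $L^1$, and \Cref{re:teclemma} makes the right-hand side continuous up to $t=0$. Therefore
\[
F'(0)=\frac{q'(0)}{p}\Big\{\ent(f^p)+\|f\|_p^p\log\|f\|_p^p\Big\}-p\,\mathcal{E}(f^{p-1},f).
\]
To identify $\int u^{p-1}\mathcal{L}u$ at $t=0$ with $\mathcal{E}(f^{p-1},f)$, we use that $f^{p-1}\in H_\mathcal{L}$ by \Cref{re:betterfunction}, together with the continuity of $u$ in $H_\mathcal{L}$, or via $\int u^{q-1}\partial_tu$ continuous. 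This step needs some care because $u\in\cont((0,\infty);H_\mathcal{L})$ only for $t>0$. We plan to handle it through the weak formulation: $\langle \partial_t u(h),u^{q(h)-1}(h)\rangle=-\mathcal{E}(u(h),u^{q(h)-1}(h))$, and then let $h\to0$ using $\mathcal{E}(u(h),g)=\langle u(h),\mathcal{L}g\rangle$-type continuity. Alternatively, we can integrate over $(0,h)$, divide by $h$, and use lower semicontinuity of the energy.

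For $G$, write $\log G(t)=q(t)\log A_p(t)+q(t)\log\|f\|_p$, so that
\[
G'(0)=\|f\|_p^p\big(p\,A_p'(0)+q'(0)\log\|f\|_p\big).
\]
Inserting both derivatives into $F'(0)\le G'(0)$ and multiplying by $p/q'(0)$ gives
\[
\ent(f^p)+\|f\|_p^p\log\|f\|_p^p-\frac{p^2}{q'(0)}\mathcal{E}(f^{p-1},f)\le \frac{p^2A_p'(0)}{q'(0)}\|f\|_p^p+p\|f\|_p^p\log\|f\|_p .
\]
The logarithmic terms cancel, since $\log\|f\|_p^p=p\log\|f\|_p$. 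This leaves exactly \eqref{eq:entropy-ineq} with $C(p)=p^2A_p'(0)/q'(0)$ and $D(p)=p^2/q'(0)$.

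Finally we remove the boundedness assumption and treat sign-changing $f$. We truncate $f_n=\min\{|f|,n\}$. This is a normal contraction, so $f_n\in H_\mathcal{L}$ with $\overline{\mathcal{E}}(f_n)\le\overline{\mathcal{E}}(|f|)$. We then pass to the limit $n\to\infty$: the entropy terms converge by monotone or dominated convergence, with the part where $f<1$ controlled via $L^1$. For the energy $\mathcal{E}(f_n^{p-1},f_n)$ we use its pointwise integrand monotonicity, which requires $\mathcal{E}(|f|^{p-2}f,f)$ to be finite or else the inequality is trivial. Using $\mathcal{E}(|f|^{p-1},|f|)\le\mathcal{E}(|f|^{p-2}f,f)$ then handles signs. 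We expect the main obstacle to be the justification of the derivative of $F$ at $t=0$, specifically the convergence of the energy term; the truncation limit is secondary.
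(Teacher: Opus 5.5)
Your argument is the paper's. Your $F$ is the paper's $U(t)=\int u^{q(t)}(t)$, and you compare it with $G(t)=A_p(t)^{q(t)}U(0)^{q(t)/p}$. Equality at $t=0$ gives $U'(0)\le U(0)\big[\tfrac{q'(0)}{p}\log U(0)+pA_p'(0)\big]$. You then insert \eqref{eq:diff} at $t=0$ so that the logarithmic terms cancel, and remove boundedness by approximation. Your two derivative computations and the resulting constants are correct.

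Your approximation by $f_n=\min\{|f|,n\}$ is actually tidier than the paper's. The paper uses Fatou on $f_n+f_n^p\log f_n^p\ge0$ and says the right-hand side passes to the limit ``trivially''. The pointwise monotonicity of $(a_n^{p-1}-b_n^{p-1})(a_n-b_n)$ is exactly what justifies $\mathcal{E}(f_n^{p-1},f_n)\uparrow\mathcal{E}(|f|^{p-1},|f|)\le\mathcal{E}(|f|^{p-2}f,f)$.

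The step you flag is genuinely delicate. The paper settles it by citing \Cref{re:teclemma}, whose proof uses continuity of $\partial_t u$ in $L^2$, which the notion of solution only provides on $(0,\infty)$. However, neither repair you sketch works as stated.

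Lower semicontinuity of the energy points the wrong way. From $U(h)\le G(h)$, $U(0)=G(0)$ and $U(h)-U(0)=\int_0^h U'(s)\,ds$ you get
\[
\frac1h\int_0^h\frac{q'(s)}{q(s)}\Big\{\ent\big(u(s)^{q(s)}\big)+U(s)\log U(s)\Big\}\,ds\le\frac{G(h)-G(0)}{h}+\frac1h\int_0^h q(s)\,\mathcal{E}\big(u(s)^{q(s)-1},u(s)\big)\,ds .
\]
So you need the $\liminf_{h\to0^+}$ of the last average to be \emph{at most} $p\,\mathcal{E}(f^{p-1},f)$, whereas lower semicontinuity only gives ``at least''. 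Your other route also fails: the identity $\mathcal{E}(u(h),g)=\langle u(h),\mathcal{L}g\rangle$ requires $g\in D(\mathcal{L})$. The function $g=u(h)^{q(h)-1}$ need not satisfy this, and it varies with $h$ anyway.

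The missing fact is that $u\in\cont([0,\infty);H_\mathcal{L})$ when $f\in H_\mathcal{L}$. Since $H_\mathcal{L}$ is the form domain $D(\mathcal{L}^{1/2})$, you have $\overline{\mathcal{E}}(u(h)-f)=\|(e^{-h\mathcal{L}}-I)\mathcal{L}^{1/2}f\|_2^2\to0$. The remaining steps are:
\begin{enumerate}
\item Since $0\le u\le\|f\|_\infty$, the mean value theorem bounds $\overline{\mathcal{E}}(u(h)^{q(h)-1})\le C\,\overline{\mathcal{E}}(u(h))$, which stays bounded.
\item $u(h)^{q(h)-1}\to f^{p-1}$ in $L^2$. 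The change of exponent is controlled pointwise by $C|q(h)-p|\,u(h)^{1/2}$, using $f\in L^1\cap L^\infty$. Together with the bound above, this gives weak convergence in $H_\mathcal{L}$.
\item Writing $\mathcal{E}(g_h,u_h)-\mathcal{E}(g_0,f)=\mathcal{E}(g_h,u_h-f)+\mathcal{E}(g_h-g_0,f)$ then yields convergence of the energy term.
\end{enumerate}
Hence the right-hand side of \eqref{eq:diff} is continuous up to $t=0$, $U$ is right-differentiable at $0$ with the value you wrote, and your proof closes.
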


\begin{proof}
    Assume first that~$f\ge0$ is bounded, and let~$u$ be the solution with initial datum~$f$.  
    Defining~$U(t) = \displaystyle\int u^{q(t)}(t)$, identity~\eqref{eq:diff} is written as
    \begin{equation}\label{eq:equality.U}
        U'(t)+q(t) \mathcal{E}\big(u^{q(t)-1}(t),u(t)\big)=\frac{q'(t)}{q(t)}\left\{\text{Ent}(u^{q(t)}(t)) + U(t)\log \left(U(t)\right) \right\}.
    \end{equation}
    Rewriting hypercontractivity with this notation, we get
    \begin{equation}
        U(t)^{\frac{1}{q(t)}} \leq A_p(t) U(0)^{\frac{1}{p}}\quad\textup{for all }t\ge0,
    \end{equation}
    with~$q(0) = p$ and~$A_p(0) = 1$. Since both sides coincide at~$t=0$, the derivatives at~$t=0$ verify
    \begin{equation}
        U'(0) \leq \left.\frac{d}{dt} \left( A_p^{q(t)}(t)U(0)^{\frac{q(t)}{p}} \right) \right|_{t=0} = U(0)\left[ \frac{ q'(0)}{p} \log(U(0)) + p A_p'(0)\right].
    \end{equation}
    If~$A_p'(0) = -\infty$, we simply replace~$A_p(t)$ by any bound above satisfying the required hypotheses. Inserting this into~\eqref{eq:equality.U} we obtain
    \begin{equation}
        \text{Ent}(f^{p}) \leq \frac{p^2}{q'(0)} A_p'(0) \int_{\RR^N} f^p + \frac{p^2}{q'(0)} \mathcal{E}(f^{p-1},f).
    \end{equation}
    For~$f\in L^1(\RR^N) \cap L^p(\RR^N) \cap H_\mathcal{L}(\mathbb{R}^N)$, take a sequence~$f_n \in L^1(\RR^N) \cap L^\infty(\RR^N) \cap H_\mathcal{L}(\mathbb{R}^N)$ approximating it. We have that~$f_n$ satisfies the logarithmic Sobolev inequality
    \begin{equation}
        \int_{\RR^N} f_n^p \log{f_n^p} \leq C(p) \|f_n\|_p^p + \|f_n\|_p^p \log{\|f_n\|_p^p} + D(p) \mathcal{E}(f_n^{p-1}, f_n).
    \end{equation}
    In the right-hand side we can trivially take limits, so we only need to examine the limit of the left-hand side. Observe that
    \begin{equation}
        f_n(x) + f_n^p(x) \log{f_n^p(x)} \geq 0.
    \end{equation}
    Then, thanks to Fatou's lemma,
    \begin{equation}
        \int_{\RR^N}(f + f^p \log{f^p}) \leq \liminf_{n \to \infty} \int_{\RR^N} (f_n + f_n^p \log{f_n^p}).
    \end{equation}
    Since
    \begin{equation}
        \lim_{n \to \infty}\int_{\RR^N} f_n = \int_{\RR^N} f,
    \end{equation}
    we get
    \begin{equation}
        \int_{\RR^N} f^p \log{f^p} \leq \liminf_{n \to \infty} \int_{\RR^N} f_n^p \log{f_n^p},
    \end{equation}
    therefore showing that~$f$ also satisfies the~$p$--logarithmic Sobolev inequality.
\end{proof}

Now, let us prove a reciprocal result, which requires stronger hypotheses. While  we required~$p$--hypercontractivity to hold in order to prove the~$p$--logarithmic Sobolev inequality, we need all the~$r$--logarithmic Sobolev inequalities for~$r\geq p$ to prove the~$p$--hypercontractivity.

\begin{Theorem} \label{re:logtohyper}
    Fix~$p \geq 2$. Assume that the~$r$--logarithmic Sobolev inequalities~\eqref{eq:entropy-ineq} hold for all~$r\geq p$. Then~$\mathcal{L}$ is $p$--hypercontractive with the family of exponents satisfying
    \begin{equation}
        q'(t) = \dfrac{q^2(t)}{D(q(t))},\qquad q(0) = p,
    \end{equation}
    and coefficients
    \begin{equation}
        A_p(t)=\exp\left(\int_0^t\frac{C(q(s))}{D(q(s))}\,ds\right)=\exp\left(\int_p^{q(t)}\frac{C(\alpha)}{\alpha^2}\,d\alpha\right).
    \end{equation}
\end{Theorem}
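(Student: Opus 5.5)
Following Gross's method, I would run identity~\eqref{eq:diff} forward in time with the exponent $q(t)$ prescribed by the ODE in the statement, and use the $q(t)$--logarithmic Sobolev inequality to close a differential inequality for $\log\|u(t)\|_{q(t)}$.

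\textbf{Reductions.} First assume $u_0\ge0$ with $u_0\in L^1(\RR^N)\cap L^\infty(\RR^N)\cap L^2(\RR^N)$. Then $u(t)$ is bounded and in $L^1$. By \Cref{re:teclemma} and the subsequent corollary, identity~\eqref{eq:logdiff}, and hence~\eqref{eq:diff}, holds for the $\cont^1$ exponent $q$ solving $q'=q^2/D(q)$, $q(0)=p$. I take $q$ to be the maximal solution on $[0,t_\infty)$. The ODE has this solution as long as $D$ is, say, continuous and positive, which I assume as a mild regularity hypothesis; it is nondecreasing since $q'>0$, and $q(t)\to\infty$ as $t\to t_\infty$, because $q$ can only stop being extendable by blowing up. Since $u(t)\in H_{\mathcal L}$ and is bounded, \Cref{re:betterfunction} gives $u(t)^{q(t)/2}\in H_{\mathcal L}$. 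So the $q(t)$--logarithmic Sobolev inequality applies to $f=u(t)$ for each $t>0$, and $q(t)\ge p$ because $q$ is increasing.

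\textbf{Main computation.} Set $U(t)=\int u^{q(t)}(t)$ and $F(t)=\log\|u(t)\|_{q(t)}=\frac{1}{q(t)}\log U(t)$. Then
\[
F'(t)=-\frac{q'}{q^2}\log U+\frac{U'}{qU}.
\]
From~\eqref{eq:diff}, using $\int u^{q-1}\mathcal{L}u=\mathcal E(u^{q-1},u)$,
\[
U'=\frac{q'}{q}\bigl(\ent(u^q)+U\log U\bigr)-q\,\mathcal E(u^{q-1},u).
\]
Substituting, the $U\log U$ terms cancel and
\[
F'=\frac{1}{qU}\Bigl(\frac{q'}{q}\ent(u^q)-q\,\mathcal E(u^{q-1},u)\Bigr).
\]
Inserting $\ent(u^q)\le C(q)U+D(q)\mathcal E(u^{q-1},u)$, the energy coefficient becomes $\frac{q'D(q)}{q}-q=0$ by the choice of $q$. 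Hence
\[
F'(t)\le\frac{q'C(q)}{q^2}=\frac{C(q(t))}{D(q(t))}.
\]
Integrating from $0$ gives $\|u(t)\|_{q(t)}\le A_p(t)\|u_0\|_p$ with $A_p$ as stated. The change of variables $\alpha=q(s)$, $d\alpha=q^2/D\,ds$, gives the second expression for $A_p$. Finally $A_p(0)=1$.

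\textbf{Approximation and the main obstacle.} The step I expect to need the most care is extending from nice data to general $u_0\in L^p$. I would truncate and cut off, taking $u_{0,n}=\min\{u_0,n\}\mathds 1_{B_n}$, and split $u_0$ into positive and negative parts. Then $u_n(t)\to u(t)$ in $L^p$ by $L^p$-contractivity and linearity, so a subsequence converges a.e., and Fatou's lemma gives the bound in $L^{q(t)}$. A secondary technical point is the absolute continuity of $F$ needed to integrate $F'$: $U$ is $\cont^1$ on $(0,t_\infty)$ by the corollary, and $U>0$ unless $u_0\equiv 0$, in which case the bound is trivial.
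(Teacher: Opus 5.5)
Your proposal is correct and is essentially the paper's proof. The paper likewise inserts the $q(t)$--logarithmic Sobolev inequality into \eqref{eq:diff} and chooses $q'=q^2/D(q)$ so that the energy terms cancel. It then integrates the resulting inequality $U'\le \frac{q'}{q}\{C(q)U+U\log U\}$, which is exactly your bound $F'\le C(q)/D(q)$ for $F=\frac{1}{q}\log U$, and extends to general $L^p$ data by approximation.

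One small point: for signed data, bound $|u(t)|$ by $e^{-t\mathcal{L}}|u_0|$, which is the sum of the solutions with data $(u_0)_\pm$, rather than applying the triangle inequality to the two parts. Otherwise you pick up an extra factor $2^{1-1/p}$ and do not get the constant exactly $A_p(t)$.
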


\begin{proof}
    Let~$u_0 \in  L^1(\RR^N) \cap L^\infty(\RR^N)\cap H_\mathcal{L}(\mathbb{R}^N)$, $u_0\ge0$, and let~$u(t)\in L^1(\RR^N) \cap L^\infty(\RR^N)\cap H_\mathcal{L}(\mathbb{R}^N)$ be the solution of problem~\eqref{problem-L} with initial data~$u_0$. Let~$s:[0, \infty)\to[p, \infty)$ be a~$\cont^1$ function to be determined later. Then,
    \begin{equation}\label{eq:inequality.entropy}
        \text{Ent}(u(t)^{s(t)}) \leq C(s(t)) \int_{\RR^N} u(t)^{s(t)} + D(s(t)) \mathcal{E} (u(t)^{s(t)-1}, u(t)).
    \end{equation}
    Writing~$U(t) = \displaystyle\int u^{s(t)}(t)$, we obtain
    \begin{equation}\label{eq:equality.U2}
        U'(t) + s(t) \mathcal{E}\big(u^{s(t)-1}(t), u(t)\big)  \leq \frac{s'(t)}{s(t)} \left\{ C(s(t)) U(t) + D(s(t)) \mathcal{E}\big(u^{s(t)-1}(t), u(t)\big) + U(t) \log \left( U(t) \right) \right\}.
    \end{equation}
    Now choose~$s(t)=q(t)$, where~$q(t)$ is the solution to
    \begin{equation}
        q'(t) = \dfrac{q^2(t)}{D(q(t))},\qquad q(0)=p.
    \end{equation}
    Combining~\eqref{eq:inequality.entropy} and~\eqref{eq:equality.U2}, the terms involving the operator cancel out and we get the differential inequality
    \begin{equation}
        U'(t) \leq \frac{q'(t)}{q(t)} \left\{ C(q(t)) U(t) + U(t) \log \left( U(t) \right) \right\}.
    \end{equation}
    This implies the desired~$p$--hypercontractivity,
    \begin{equation}
        U(t)^{1/q(t)} \leq \exp \left( \int_0^t \frac{C(q(s))}{D(q(s))} \, ds \right) U_0^{1/p}.
    \end{equation}
    Finally, the change of variables~$q(s)=\alpha$ yields
    \begin{equation}
        \int_0^t \frac{C(q(s))}{D(q(s))} \, ds = \int_p^{q(t)} \frac{C(\alpha)}{\alpha^2} \, d\alpha.
    \end{equation}
    We have shown that~$p$--hypercontractivity holds for bounded initial data. If the initial data is just in~$L^p$, the argument works via approximation for the limit solution.
\end{proof}

Notice that, given~$p \geq 2$, satisfying the~$r$--logarithmic Sobolev inequalities for every~$r \geq p$ is equivalent to being~$r$--hypercontractive for every~$r \geq p$. We would like to extend the result to~$p\in(1,2)$. This will be done through a duality argument, which requires showing that the semigroup~$e^{-t\mathcal{L}}$ is self-adjoint, an immediate consequence of~$\mathcal{L}$ being self-adjoint.
 
\begin{Lemma}
    The semigroup operator~$e^{-t\mathcal{L}}:L^2(\RR^N) \to L^2(\RR^N)$ associated to a self-adjoint operator~$\mathcal{L}$ is also a self-adjoint operator.
\end{Lemma}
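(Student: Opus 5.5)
The claim is that $e^{-t\mathcal{L}}$ is a self-adjoint operator on $L^2(\RR^N)$; the natural route is the spectral theorem. Since $\mathcal{L}$ is given by the closed, symmetric, nonnegative Dirichlet form $\mathcal{E}$ on $H_{\mathcal{L}}(\RR^N)$, it is the nonnegative self-adjoint operator associated with that form. By the spectral theorem there is a projection-valued measure $E$ on $[0,\infty)$ with $\mathcal{L}=\int_0^\infty \lambda\,dE(\lambda)$. We then define
\[
e^{-t\mathcal{L}}=\int_0^\infty e^{-t\lambda}\,dE(\lambda)
\]
through the bounded Borel functional calculus. The integrand $\lambda\mapsto e^{-t\lambda}$ is real-valued and bounded by $1$, so this is a bounded operator with $(e^{-t\mathcal{L}})^*=\int_0^\infty \overline{e^{-t\lambda}}\,dE(\lambda)=e^{-t\mathcal{L}}$. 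That is, it is self-adjoint (and a contraction).

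The second step is to check that this spectral semigroup is really the solution operator of \Cref{def:L2solutions}. Put $u(t)=e^{-t\mathcal{L}}u_0$.
\begin{itemize}
\item For $t>0$, the function $\lambda\mapsto \lambda e^{-t\lambda}$ is bounded. Hence $u(t)\in D(\mathcal{L})\subset H_{\mathcal{L}}(\RR^N)$.
\item By dominated convergence in the spectral integral, $u$ is $\cont^1((0,\infty);L^2(\RR^N))$ with $u'=-\mathcal{L}u$.
\item Strong continuity at $t=0$ also follows from dominated convergence, since $|e^{-t\lambda}-1|^2\le 4$ and the integrand tends to $0$.
\end{itemize}
For uniqueness, suppose $w$ is the difference of two mild solutions with the same datum. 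Then
\[
\frac{d}{dt}\|w\|_2^2=-2\mathcal{E}(w,w)\le 0,
\]
so $w\equiv0$, and the mild solution coincides with the spectral one.

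As an alternative route avoiding the spectral theorem, we could verify symmetry directly. Fix $f,g\in L^2(\RR^N)$ and set $\phi(s)=\langle e^{-s\mathcal{L}}f,\,e^{-(t-s)\mathcal{L}}g\rangle$ for $0<s<t$. Its derivative is
\[
\phi'(s)=-\langle \mathcal{L}e^{-s\mathcal{L}}f,\,e^{-(t-s)\mathcal{L}}g\rangle+\langle e^{-s\mathcal{L}}f,\,\mathcal{L}e^{-(t-s)\mathcal{L}}g\rangle ,
\]
which vanishes because $\mathcal{L}$ is self-adjoint. So $\phi$ is constant, and letting $s\to0^+$ and $s\to t^-$ (using continuity at $0$) gives $\langle e^{-t\mathcal{L}}f,g\rangle=\langle f,e^{-t\mathcal{L}}g\rangle$. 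Boundedness of $e^{-t\mathcal{L}}$ then upgrades symmetry to self-adjointness.

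The main obstacle is the identification step: the self-adjointness itself is immediate from the functional calculus, but we must make sure that $\mathcal{L}$, as defined pointwise through the kernel, agrees with the form operator on the relevant domain. I would handle this by working entirely through the Dirichlet form, as in the weak formulation already given, so that the Friedrichs extension of the form is the operator in the ODE of \Cref{def:L2solutions}.
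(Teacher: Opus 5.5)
Your proposal is correct. Your main argument differs from the paper's, while your ``alternative route'' is essentially the paper's proof. The paper does not use the spectral theorem. It takes two mild solutions $u,v$, sets $F(\tau)=\int u(\tau)v(t-\tau)$ on $[0,t]$, gets $F'\equiv0$ from the symmetry of $\mathcal{L}$, and compares $F(0)$ with $F(t)$. That argument works directly with the solution operator as the paper defines it, namely mild solutions produced by Hille--Yosida. It needs only that these are $\cont^1((0,\infty);L^2(\RR^N))$ with values in $D(\mathcal{L})$, where $\langle\mathcal{L}a,b\rangle=\langle a,\mathcal{L}b\rangle$.

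Your spectral route makes self-adjointness a one-line consequence of the functional calculus, and contractivity comes for free. The cost is that all the work moves into the identification step. First, $\mathcal{L}$ must be the operator associated with the closed form on its full domain $H_{\mathcal{L}}(\RR^N)$, as in the Appendix; closedness is a Fatou argument. Strictly speaking this is not a Friedrichs extension built from a smaller core. Second, you need uniqueness of mild solutions so that the spectral semigroup is the solution operator. You handle both adequately. For literal agreement with \Cref{def:L2solutions}, you should also record $\cont((0,\infty);H_{\mathcal{L}}(\RR^N))$, which follows from the same dominated-convergence argument with weight $\lambda^{1/2}$.

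In your alternative route you are slightly more careful than the paper. You differentiate only on $(0,t)$, where the solutions are $\cont^1$ and lie in $D(\mathcal{L})$, and reach the endpoints by $L^2$-continuity. The paper differentiates on $[0,t]$, although $\partial_t u(0)$ need not exist for general $u_0\in L^2(\RR^N)$. You also state explicitly that a bounded, everywhere-defined symmetric operator is self-adjoint, which the paper leaves implicit.
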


\begin{proof}
    Let~$u$ and~$v$ be solutions to problem~\eqref{problem-L} with initial data~$u_0, v_0 \in L^2(\RR^N)$ respectively. We wish to prove that
    \begin{equation}
        \int_{\RR^N} u_0 e^{-t\mathcal{L}}v_0=\int_{\RR^N} u_0 v(t) = \int_{\RR^N} u(t) v_0=\int_{\RR^N} v_0 e^{-t\mathcal{L}}u_0\quad\textup{for any }t>0.
    \end{equation}
    Fix~$t>0$, and define
    \begin{equation}
        F(\tau) = \int_{\RR^N} u(\tau) v(t-\tau) \quad \text{ for } 0\leq \tau \leq t.
    \end{equation}
    Then,
    \begin{equation}
        \begin{aligned}
            F'(\tau) &= \int_{\RR^N} u'(\tau) v(t-\tau) - \int_{\RR^N} u(\tau) v'(t-\tau) \\
            &= -\int_{\RR^N} \mathcal{L}u(\tau) v(t-\tau) + \int_{\RR^N} u(\tau) \mathcal{L}v(t-\tau) = 0,
        \end{aligned}
    \end{equation}
    due to the self-adjointness of~$\mathcal{L}$. Thus,~$F$ is constant, and evaluating at~$\tau = 0$ and~$\tau = t$ yields the result.
\end{proof}

Using the self-adjointness of the solution semigroup~$e^{-t\mathcal{L}}$ we can extend the range of hypercontractivity by means of a standard duality argument.

\begin{Proposition} \label{re:dualityresult}
    Let~$u(t) = e^{-t\mathcal{L}} u_0$ be the solution of problem~\eqref{problem-L}. Assume that~$\mathcal{L}$ is~$p$--hypercontractive for every~$p\geq 2$, and let us define~$t_p^q$, with~$q\geq p$, as the time necessary to get from~$L^p$ to~$L^q$, so that
    \begin{equation}
        \|e^{-t_p^q\mathcal{L}} u_0\|_q \leq A_p(t_p^q) \|u_0\|_p.
    \end{equation}
    Then, if~$p' = \frac{p}{p-1}$ denotes the Hölder conjugate of~$p$,
    \begin{equation}
        \|e^{-t_p^q\mathcal{L}} v_0\|_{p'} \leq A_p(t_p^q) \|v_0\|_{q'}.
    \end{equation}
    Hence, we have that~$t_{q'}^{p'} = t_p^q$, and we can define~$A_{q'}(t) = A_p(t)$. Therefore,~$\mathcal{L}$ is~$p$-hypercontractive for every~$1<p\leq2$.

    Furthermore, if~$\mathcal{L}$ is eventually ultracontractive for every~$p\geq2$, that is, if there exists some time~$T_p$ such that
    \begin{equation}
        \|e^{-T_p\mathcal{L}} u_0\|_\infty \leq A_p(T_p) \|u_0\|_p,
    \end{equation}
    then we also have that~$\mathcal{L}$ is~$1$--hypercontractive, that is,
    \begin{equation}
        \|e^{-T_p\mathcal{L}} v_0\|_{p'} \leq A_p(T_p) \|v_0\|_1.
    \end{equation}
\end{Proposition}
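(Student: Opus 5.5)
The plan is a standard duality argument built on the self-adjointness lemma proved just above. Write $T=e^{-t_p^q\mathcal{L}}$. Fix $v_0\in L^{q'}(\RR^N)\cap L^2(\RR^N)$, and compute the $L^{p'}$ norm of $Tv_0$ by duality:
\begin{equation}
    \|Tv_0\|_{p'}=\sup\Big\{\int_{\RR^N} u_0\, Tv_0 \,:\, u_0\in L^p(\RR^N)\cap L^2(\RR^N),\ \|u_0\|_p\le1\Big\}.
\end{equation}
Density of $L^p\cap L^2$ in $L^p$ justifies restricting the supremum; when $p'=\infty$, i.e.\ $p=1$, the argument is the same.

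The key steps are as follows. For each admissible $u_0$, the preceding lemma moves $T$ across: $\int u_0\,Tv_0=\int (Tu_0)\,v_0$. Hölder's inequality and the assumed $p$--hypercontractivity then give
\begin{equation}
    \int (Tu_0)\,v_0\le \|Tu_0\|_q\|v_0\|_{q'}\le A_p(t_p^q)\|u_0\|_p\|v_0\|_{q'} .
\end{equation}
Taking the supremum yields $\|Tv_0\|_{p'}\le A_p(t_p^q)\|v_0\|_{q'}$ on $L^{q'}\cap L^2$. By density and the $L^{q'}$ contractivity of the semigroup, this extends to all of $L^{q'}$; for general data, the semigroup on $L^{q'}$ is understood as the limit of the $L^2$ approximations, as discussed in the preliminaries.

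With this estimate in hand, the remaining claims follow by reindexing. Since $q\ge p\ge2$, we have $q'\le p'\le 2$, and the estimate says that in time $t_p^q$ we go from $L^{q'}$ to $L^{p'}$. We then define the exponent function starting at $r=q'\in(1,2]$. Fix such an $r$ and set $q=r'\ge2$. For $t$ in the admissible range, choose $p=p(t)\in[2,q]$ so that going from $L^{p(t)}$ to $L^q$ takes exactly time $t$. This amounts to inverting the exponent map of the $p$--hypercontractivity started at the various $p$, which uses that $t_p^q$ is monotone and continuous in $p$. The target exponent is then $p(t)'$, which increases toward $2$ as $t\to t_2^q$. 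Beyond that time, we continue with the $2$--hypercontractivity, using the semigroup property and multiplying the constants. This produces $q(t)\to\infty$ and shows that $\mathcal{L}$ is $r$--hypercontractive, with constants $A_{q'}(t)=A_p(t)$ as stated.

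For the final claim we apply the same duality with $q=\infty$ and $q'=1$. For $v_0\in L^1\cap L^2$ and $u_0\in L^p\cap L^2$ with $\|u_0\|_p\le1$,
\begin{equation}
    \int u_0\, e^{-T_p\mathcal{L}}v_0=\int (e^{-T_p\mathcal{L}}u_0)\,v_0\le A_p(T_p)\|v_0\|_1 .
\end{equation}
This gives $\|e^{-T_p\mathcal{L}}v_0\|_{p'}\le A_p(T_p)\|v_0\|_1$, and approximation in $L^1$ extends it to all $v_0\in L^1$.

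The main obstacle is the bookkeeping that converts the fixed-time $L^{q'}\to L^{p'}$ bounds into a single nondecreasing exponent function $q(t)$ with $q(0)$ equal to the prescribed exponent in $(1,2]$. This rests on the monotonicity and continuity of $t_p^q$ in $p$, together with the concatenation through $L^2$ described above.
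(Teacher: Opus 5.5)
Your duality argument for both displayed estimates is the paper's proof. You move $e^{-t_p^q\mathcal{L}}$ across by self-adjointness on $L^2\cap L^p$ and $L^2\cap L^{q'}$, apply Hölder and the $p$--hypercontractive bound, take the supremum over $\|u_0\|_p\le 1$, and remove the $L^2$ restrictions by approximation, using $q=\infty$, $q'=1$ for the last claim.

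The paper does no further bookkeeping for the final ``therefore''. Your version of it invokes monotonicity and continuity of $t_p^q$ in $p$, which the hypotheses do not supply, since the exponent functions for different $p$ are independent. You do not need them, because the definition only asks for a nondecreasing exponent. For $r\in(1,2)$, keep the exponent equal to $r$ by $L^r$--contractivity up to $\tau=t_2^{r'}$, where the dualized $2$--estimate gives $L^r\to L^2$ with constant $A_2(\tau)$. Then follow the $2$--hypercontractive exponent with constant $A_2(\tau)A_2(t-\tau)$. If you want a gradual exponent on $[0,\tau]$, Stein interpolation as in the proof of \Cref{re:supercontractivity} provides one.
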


\begin{proof}
    Let~$u_0\in L^2(\RR^N) \cap L^p(\RR^N)$. Since,~$e^{-t_p^q\mathcal{L}}u_0\in L^q(\RR^N)$,  the product~$\langle e^{-t_p^q\mathcal{L}}u_0, v_0 \rangle$ is well-defined for any~$v_0 \in L^2(\RR^N) \cap L^{q'}(\RR^N)$. Using the self-adjointness and the hypercontractivity we get
    \begin{equation}
        \langle u_0, e^{-t_p^q\mathcal{L}}v_0 \rangle = \langle e^{-t_p^q\mathcal{L}}u_0, v_0 \rangle \leq A_p(t_p^q) \|u_0\|_p \|v_0\|_{q'}.
    \end{equation}
    We now ignore the~$L^2$ assumptions on both~$u_0$ and~$v_0$ via approximation and then taking the supremum of~$u_0$ functions in~$L^p$ with~$\|u_0\|_p = 1$ we obtain the first result. For the~\mbox{$1$--hypercontractivity} result, we just follow the same method.
\end{proof}

Now let us give the integral condition that characterizes the strong hypercontractivity.

\begin{Proposition}\label{re:logtostronghyper}
    Let~$A_p$ satisfy the hypotheses of~\Cref{re:hypertolog} and assume~$q$ is given by the solution of
    \begin{equation}
        q'(t) = h(q(t)),\qquad q(0)=p
    \end{equation}
    for some continuous positive function~$h:[p,\infty) \to \RR$. If~$q(t)$ blows up at~$t = t_\infty < \infty$ then the~$p$--logarithmic Sobolev inequalities~\eqref{eq:entropy-ineq} are satisfied for any~$p\geq 2$ and~$D(p) = \frac{p^2}{h(p)}$ verifying
    \begin{equation} \label{eq:explosioncondition}
        t_\infty = \int_p^\infty \frac{D(\alpha)}{\alpha^2} \, d\alpha < \infty.
    \end{equation}
    Reciprocally, under the hypotheses of~\Cref{re:logtohyper}, the operator~$\mathcal{L}$ is strongly~$p$--hypercontractive for this~$t_\infty$.
\end{Proposition}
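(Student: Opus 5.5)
The statement is essentially a repackaging of \Cref{re:hypertolog} and \Cref{re:logtohyper}, combined with a separation-of-variables computation for the autonomous ODE $q'=h(q)$. I would treat the two directions separately and then check that the blow-up time coincides with the integral in~\eqref{eq:explosioncondition}.

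\emph{Direct implication.} Since the hypotheses hold for every starting exponent, I read the hypothesis as follows: for each $r\ge p$, $\mathcal{L}$ is $r$--hypercontractive along the flow $q'=h(q)$ started at $q(0)=r$. Because the ODE is autonomous, this is the same curve as the original one, restarted at the time when $q$ reaches $r$.

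For a fixed $r$, I would apply \Cref{re:hypertolog} to this restarted flow. Its hypotheses are checked as follows:
\begin{itemize}
\item $q$ is increasing and $\cont^1$, because $h$ is continuous and positive.
\item $q'(0)=h(r)>0$.
\item The $A$-hypotheses are the assumed ones.
\end{itemize}
\Cref{re:hypertolog} then gives the $r$--logarithmic Sobolev inequality with $D(r)=r^2/q'(0)=r^2/h(r)$. In particular this holds at the starting exponent, which is the claimed formula $D(p)=p^2/h(p)$.

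It remains to identify $t_\infty$. Separating variables in $q'=h(q)$ and substituting $\alpha=q(s)$ gives
\[
t=\int_0^t\frac{q'(s)}{h(q(s))}\,ds=\int_p^{q(t)}\frac{d\alpha}{h(\alpha)}=\int_p^{q(t)}\frac{D(\alpha)}{\alpha^2}\,d\alpha .
\]
Letting $t\to t_\infty$, so that $q(t)\to\infty$, yields~\eqref{eq:explosioncondition}, and finiteness follows from the assumption $t_\infty<\infty$.

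\emph{Reciprocal.} Assume now the $r$--inequalities for all $r\ge p$, with $\int_p^\infty D(\alpha)\alpha^{-2}\,d\alpha<\infty$. \Cref{re:logtohyper} provides $p$--hypercontractivity along the solution of $q'=q^2/D(q)$, $q(0)=p$. The same separation of variables shows that $q$ reaches every finite value before time $T:=\int_p^\infty D(\alpha)\alpha^{-2}\,d\alpha$, and that $q(t)\to\infty$ as $t\to T^-$. So $q$ blows up exactly at $t_\infty=T<\infty$, which is strong $p$--hypercontractivity by \Cref{contractivity2}.

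\emph{Expected obstacles.} The ODE step needs some care: the existence and uniqueness of $q$, and the fact that it cannot stop before $T$. A positive continuous right-hand side gives a strictly increasing solution whose inverse function is exactly $t(q)=\int_p^q D/\alpha^2$, so no Lipschitz assumption is needed.

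The main delicate point is regularity of $D$. To solve the ODE in \Cref{re:logtohyper} I would want $D$ measurable and locally bounded with $1/D$ locally integrable. If $D$ is only assumed measurable, I would first replace it by a continuous majorant, which is allowed since larger $D$ keeps the inequality valid, while keeping the integral finite. A lower semicontinuous regularization plus a small summable perturbation should suffice.

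I would also note that the coefficient $A_p(t)=\exp(\int_p^{q(t)}C(\alpha)\alpha^{-2}d\alpha)$ may diverge as $t\to t_\infty$. This does not matter, because \Cref{contractivity2} only requires finiteness of $A_p$ for $t<t_\infty$.
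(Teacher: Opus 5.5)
Your proposal is correct and follows the same route as the paper. \Cref{re:hypertolog} applied to the flow $q'=h(q)$ gives $D(p)=p^2/q'(0)=p^2/h(p)$, the identity $t_\infty=\int_p^\infty d\alpha/h(\alpha)=\int_p^\infty D(\alpha)\alpha^{-2}\,d\alpha$ is the Osgood blow-up condition obtained by separating variables, and the converse is \Cref{re:logtohyper} with the same computation. Your extra regularity discussion is unnecessary here, since $D(\alpha)=\alpha^2/h(\alpha)$ is already continuous. As stated it would not work anyway: a lower semicontinuous majorant does not give the $\cont^1$ exponent curve that \Cref{re:logtohyper} needs, and a larger majorant moves the blow-up time away from this $t_\infty$.
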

\begin{proof}
    The function~$q$ solution of the ODE satisfies the conditions of~\Cref{re:hypertolog}. Condition~\eqref{eq:explosioncondition} is nothing but the standard blow-up condition for ODEs (also known as Osgood's condition) written in terms of~$D$ instead of~$h$. 
\end{proof}

This result characterizes strong~$p$--hypercontractivity under the mild hypothesis that the function~$q$ appearing in the definition of~$p$--hypercontractivity~\eqref{def:hypercontractive} comes from an autonomous ODE. Notice that for the integral condition~\eqref{re:logtostronghyper}, only the behaviour as~$p\to\infty$ of~$D(p)$ matters. Adding the same integral condition but for~$C(p)$ instead allows us to prove the eventual ultracontractivity of the operator.

\begin{Corollary}\label{re:logtoeventualultra}
    Under the hypotheses of~\Cref{re:logtohyper}, the operator~$\mathcal{L}$ is eventually ultracontractive if, for any~$p \geq 2$,
    \begin{equation}\label{eq:explosionandboundednesscondition}
        \int_p^\infty\frac{D(\alpha)}{\alpha^2}\,d\alpha<\infty\quad\text{and }\int_p^\infty\frac{C(\alpha)}{\alpha^2}\,d\alpha<\infty.
    \end{equation}
\end{Corollary}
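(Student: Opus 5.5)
The plan is to apply \Cref{re:logtohyper} and let $t\to t_\infty$, using Fatou's lemma to pass from the estimates in $L^{q(t)}$ to an estimate in $L^\infty$. Fix $p\ge 2$ and let $q$ be the solution of $q'=q^2/D(q)$ with $q(0)=p$. By \Cref{re:logtohyper},
\begin{equation}
    \|u(t)\|_{q(t)}\le A_p(t)\|u_0\|_p,\qquad A_p(t)=\exp\Big(\int_p^{q(t)}\frac{C(\alpha)}{\alpha^2}\,d\alpha\Big).
\end{equation}
The first integral condition in~\eqref{eq:explosionandboundednesscondition} is exactly the Osgood condition of \Cref{re:logtostronghyper}, written as $\int_p^\infty dq\,D(q)/q^2<\infty$. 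Hence $q$ blows up at the finite time $t_\infty=\int_p^\infty D(\alpha)\alpha^{-2}\,d\alpha$. The second integral condition gives
\begin{equation}
    A_p(t)\le A_\infty:=\exp\Big(\int_p^\infty \frac{C(\alpha)^+}{\alpha^2}\,d\alpha\Big)<\infty,
\end{equation}
uniformly in $t<t_\infty$. If $C$ can be negative, we interpret the hypothesis as finiteness of the positive part, or we simply replace $C$ by $C^+$, which keeps the log Sobolev inequalities true.

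Next we pass to the limit $t\to t_\infty^-$. Take $u_0\in L^1\cap L^\infty\cap L^p$ with $u_0\ge0$, where all the computations are justified. By continuity of $t\mapsto u(t)$ in $L^2$, and since $\|u(t)\|_\infty\le\|u_0\|_\infty$, we have $u(t)\to u(t_\infty)$ in $L^r$ for every finite $r\ge2$. Fix a finite $r$. For $t$ close to $t_\infty$ we have $q(t)\ge r$, so we can combine Hölder on the set $\{u(t)>\lambda\}$, or more simply lower semicontinuity. For each fixed $r$, the bound
\begin{equation}
    \|u(t_\infty)\|_r\le\liminf_{t\to t_\infty}\|u(t)\|_r
\end{equation}
alone does not control the $L^r$ norm by the $L^{q(t)}$ norm, because the measure space is infinite.

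The cleaner route is to avoid this by using the semigroup property. Pick $t_0<t_\infty$. For $t\in(t_0,t_\infty)$ we write
\begin{equation}
    \lambda\,\big|\{u(t)>\lambda\}\big|^{1/q(t)}\le\|u(t)\|_{q(t)}\le A_\infty\|u_0\|_p.
\end{equation}
Letting $t\to t_\infty$, and using $u(t)\to u(t_\infty)$ in measure, the level set $\{u(t_\infty)>\lambda\}$ has finite measure for every $\lambda>0$. This is guaranteed, since $u(t_\infty)\in L^p$. Therefore $|\{u(t_\infty)>\lambda\}|^{1/q(t)}\to1$ whenever that measure is positive, and we get $\lambda\le A_\infty\|u_0\|_p$. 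Note that $\liminf|\{u(t)>\lambda\}|\ge|\{u(t_\infty)>\lambda'\}|$ for $\lambda'>\lambda$, which handles convergence in measure; the measure is also bounded above by $\|u_0\|_p^p/\lambda^p$, which keeps it finite. We conclude that $\|u(t_\infty)\|_\infty\le A_\infty\|u_0\|_p$.

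Finally we extend the result. We pass to general $u_0\in L^p$ by density and the decomposition $u_0=(u_0)_+-(u_0)_-$. For $t\ge t_\infty$ we use $\|e^{-(t-t_\infty)\mathcal L}\|_{\infty\to\infty}\le1$. Taking $p=2$ gives~\eqref{def:ultracontractive} for all $t\ge t_0:=t_\infty(2)$, which is eventual ultracontractivity.

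The main obstacle is the limit $q(t)\to\infty$ on an infinite measure space. Norms $L^{q}$ do not converge to the $L^\infty$ norm monotonically there, so we must use the level-set argument above rather than the usual $\|f\|_\infty=\lim_q\|f\|_q$. That usual identity does hold for $f\in L^p\cap L^\infty$, so it could serve as an alternative route, but the time variable changes simultaneously with the exponent.
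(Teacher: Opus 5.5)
Your proof is correct, but it takes a more roundabout route than the paper.

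\textbf{The paper's route.} The paper never passes to the limit $t\to t_\infty^-$ along the curve $(t,q(t))$. Fix $t\ge t_\infty$ and any $r\ge p$, and choose $s<t_\infty$ with $q(s)=r$. The $L^r$-contractivity of the semigroup (implicit in the paper's one-line argument) then gives
$\|u(t)\|_r\le\|u(s)\|_r\le A_p(s)\|u_0\|_p\le \sup_{s<t_\infty}A_p(s)\,\|u_0\|_p$.
This bound holds for every $r\ge p$ at the \emph{same} time $t$, and $u(t)\in L^p$. So the standard identity $\|f\|_\infty=\lim_{r\to\infty}\|f\|_r$ gives the $L^\infty$ bound immediately.

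The obstacle you identify (time and exponent moving together on an infinite measure space) disappears once you compare $u(t)$ with $u(s)$ at the time $s$ where $q(s)=r$, rather than with $u(t')$ for $t'$ near $t_\infty$. This is exactly the ``semigroup property'' route you announce and then drop. Your fixed-$r$ attempt would have closed in one line this way.

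\textbf{Your route.} The level-set argument is valid. Its ingredients are:
\begin{itemize}
\item Chebyshev's inequality along the curve;
\item convergence in measure from $L^2$-continuity;
\item the $\lambda'>\lambda$ device for a lower bound on $|\{u(t)>\lambda\}|$;
\item $L^p$-contractivity for the upper bound;
\item $L^\infty$-contractivity for $t>t_\infty$.
\end{itemize}
In effect it reproves $\lim_r\|f\|_r=\|f\|_\infty$ along a moving time. Its only advantage is that it uses contractivity in $L^p$ and $L^\infty$ rather than in every $L^r$. The price is a restriction to $L^1\cap L^\infty$ data, an appeal to time-continuity, and an extra density step. The prose should also be tidied: the abandoned fixed-$r$ paragraph and the conflation of $u(t)$ with $u(t_\infty)$ in the level-set limit make the argument harder to follow than it needs to be.

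\textbf{Two minor points.}
\begin{itemize}
\item The passage to $C^+$ is unnecessary and not quite justified: a convergent improper integral $\int_p^\infty C(\alpha)\alpha^{-2}\,d\alpha$ need not have $\int_p^\infty C^+(\alpha)\alpha^{-2}\,d\alpha<\infty$. All you need is $\sup_{Q\ge p}\int_p^Q C(\alpha)\alpha^{-2}\,d\alpha<\infty$, which follows from convergence.
\item Your closing claim that one \emph{must} use level sets instead of $\|f\|_\infty=\lim_q\|f\|_q$ is not accurate, as the paper's proof shows.
\end{itemize}
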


\begin{proof}
    Thanks to the first integral condition, we know that there exists a time~$t_\infty$ such that~$u(t)\in L^q(\RR^N)$ for all~$p\in[1,\infty)$ and every~$t \geq t_\infty$. The second integral condition implies that~$A_p(t_\infty) < \infty$. Then, for every~$t\geq t_\infty$,
    \begin{equation}
        \|u(t)\|_q \leq \sup_{0<t \leq t_\infty} A_p(t) \|u_0\| \quad \text{for all } q \geq p,
    \end{equation}
    and this uniform bound implies that~$u(t)$ is bounded for every~$t\geq t_\infty$, with the same bound.
\end{proof}
Observe that this result does not yield a complete characterization of eventual ultracontractivity. Indeed, applying~\Cref{re:hypertolog} to an eventually ultracontractive operator~$\mathcal{L}$ does not provide the integral condition on~$C$ required in~\Cref{re:logtoeventualultra} to obtain ultracontractivity, even under the assumption that~$q$ arises from an ODE.

\begin{Remark}
    The function~$D$ is the only responsible for the growth of the integrability exponent~$q(t)$. For a fixed~$t \geq 0$,~$q(t)$ is then fixed, and the only responsible for the control of the norm~$\|u(t)\|_{q(t)}$ is the function~$C(p)$. Notice that, in both cases, only the behaviour of~$D(p)$ and~$C(p)$ when~$p \gg 1$ matters.
\end{Remark}

Let~$D$ be constant, which implies the existence of some~$0<t_\infty<\infty$ such that~$\lim_{t \to t_\infty} q(t) = \infty$. If also~$C$ is constant, then
\begin{equation}
    \int_p^{\infty} \frac{C}{\alpha^2} \, d\alpha = \frac Cp < \infty,
\end{equation}
and thus we obtain eventual ultracontractivity. However, the situation that we will find throughout the paper is more similar to~$C(p) = p$, which implies that
\begin{equation}
    \int_p^{q(t)} \frac{C(\alpha)}{\alpha^2} \, d\alpha = \log\left(\frac{q(t)}{p} \right),
\end{equation}
and therefore
\begin{equation}
    \|u(t)\|_{q(t)} \leq A_p(t) \|u_0\|_p, \quad \text{where } A_p(t) = \frac{q(t)}{p}.
\end{equation}
Hence,~$\lim_{t \to t_\infty} A_p(t) = \infty$. This means that even though~$q(t)$ blows up at~$t_\infty$, we cannot deduce from this that~$u$ becomes bounded. In fact, along the paper we will find that solutions in general will not become bounded at~$t=t_\infty$. Nevertheless, they will become bounded for~$t > t_\infty$, at least in the translation-invariant setting.

Now, let us show that we can understand supercontractivity and ultracontractivity within the hypercontractivity framework, through the same Stein's interpolation argument that is used in~\cite{DaviesBook, DaviesSimonIntrinsic} to show ultracontractivity. This allows us to fully characterize supercontractivity, while giving the expected sufficient conditions for ultracontractivity. Observe that the condition enabling the passage from supercontractivity to ultracontractivity is analogous to the one that allows us to pass from strong hypercontractivity to eventual ultracontractivity.
\begin{Theorem} \label{re:supercontractivity}
    Let~$e^{-t\mathcal{L}}$ be a symmetric Markov semigroup.
    \begin{itemize}
        \item Let~$q\geq2$. If~$\mathcal{L}$ is supercontractive, i.e., if it satisfies~\eqref{def:supercontractive}, then the logarithmic Sobolev inequality         
        \begin{equation} \label{eq:logdelsuper}
                    \ent(f^2) \leq 2\beta(\varepsilon) \|f\|_2^2 + 2\varepsilon \overline{\mathcal{E}}(f), \qquad \beta(\varepsilon) = (1-2/q)^{-1} \log\Big(B\big(\varepsilon (1-2/q),q\big)\Big),
        \end{equation}
        holds for any~$\varepsilon > 0$ and any~$f \in L^1(\RR^N) \cap H_\mathcal{L}(\RR^N)$.
        \item If~$\mathcal{L}$ is ultracontractive, i.e., if it satisfies~\eqref{def:ultracontractive}, then~\eqref{eq:logdelsuper} holds with~$\beta(\varepsilon) = \log B(\varepsilon)$.
    \end{itemize}
    On the other hand:
    \begin{itemize}
        \item If~$\mathcal{L}$ satisfies a logarithmic Sobolev inequality as~\eqref{eq:logdelsuper} for all~$\varepsilon>0$ with~$\beta \colon (0, \infty) \to \RR$ any continuous function, then~$\mathcal{L}$ is supercontractive.
        \item If, in addition, 
            \begin{equation}
                M(t) = \frac1{t} \int_0^{t} \beta(s) \, ds < \infty\quad\textup{for any }t>0,
            \end{equation}
            then~$\mathcal{L}$ is ultracontractive.
    \end{itemize}
\end{Theorem}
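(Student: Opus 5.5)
The forward implications follow from \Cref{re:hypertolog} with $p=2$, once we manufacture a $2$--hypercontractivity estimate from the supercontractive bound by Stein's interpolation. Fix $q\ge 2$ and $T>0$, and consider the analytic family $z\mapsto e^{-zT\mathcal{L}}$ on the strip $0\le \operatorname{Re} z\le 1$.

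On $\operatorname{Re} z=0$, the operators $e^{-iyT\mathcal{L}}$ are unitary on $L^2(\RR^N)$, since $\mathcal{L}$ is self-adjoint and nonnegative. On $\operatorname{Re} z=1$ we factor $e^{-(1+iy)T\mathcal{L}}=e^{-T\mathcal{L}}e^{-iyT\mathcal{L}}$; by \eqref{def:supercontractive} its $L^2\to L^q$ norm is at most $B(T,q)$. Stein's interpolation theorem then gives, for $\theta\in[0,1]$,
\begin{equation}
\|e^{-\theta T\mathcal{L}}u_0\|_{q_\theta}\le B(T,q)^{\theta}\|u_0\|_2,\qquad \frac1{q_\theta}=\frac12-\theta\Big(\frac12-\frac1q\Big).
\end{equation}
Setting $t=\theta T$, this is $2$--hypercontractivity with $A_2(t)=B(T,q)^{t/T}$ and $q(t)=q_{t/T}$. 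Hence $A_2(0)=1$ and $A_2'(0)=\log B(T,q)/T$. Differentiating $1/q(t)$ at $t=0$ gives $q'(0)=4(\tfrac12-\tfrac1q)/T=2(1-2/q)/T$.

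The function $q(t)$ is only defined up to $t=T$ and does not blow up there. However, the proof of \Cref{re:hypertolog} uses only the comparison of derivatives at $t=0$. So I would either note this explicitly, or extend $q$ beyond $T$ by any increasing $\cont^1$ function blowing up later (using $L^q$-contractivity for the norm bound), which changes nothing at $t=0$.

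With the choice $T=\varepsilon(1-2/q)$, \Cref{re:hypertolog} yields
\begin{equation}
D(2)=\frac{4}{q'(0)}=2\varepsilon,\qquad C(2)=\frac{4A_2'(0)}{q'(0)}=\frac{2\log B(T,q)}{1-2/q}=2\beta(\varepsilon),
\end{equation}
which is \eqref{eq:logdelsuper}. The ultracontractive case is the same argument with $q=\infty$ and $T=\varepsilon$, giving $\beta(\varepsilon)=\log B(\varepsilon)$. The extension from bounded to general $f\in L^1\cap H_\mathcal{L}$ is handled by the Fatou argument already contained in \Cref{re:hypertolog}.

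For the converse, I would first convert the one-parameter family \eqref{eq:logdelsuper} into $r$--logarithmic Sobolev inequalities. Apply it to $|f|^{r/2}$ and use Stroock--Varopoulos, exactly as in \eqref{eq:badlogfamily}, but letting $\varepsilon=\varepsilon(r)$ depend on $r$:
\begin{equation}
C(r)=2\beta(\varepsilon(r)),\qquad D(r)=\frac{\varepsilon(r)\,r^2}{2(r-1)}.
\end{equation}
\Cref{re:logtohyper} with $p=2$ then gives hypercontractivity with $q'=q^2/D(q)$. For supercontractivity, fix any target $q<\infty$ and time $t>0$. Choosing $\varepsilon$ constant and small enough, the time $\int_2^q D(\alpha)/\alpha^2\,d\alpha=\frac{\varepsilon}{2}\log\big(2(q-1)/q\big)\cdot$(bounded factor) is at most $t$. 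The constant $A_2=\exp\big(\int_2^q C(\alpha)/\alpha^2\,d\alpha\big)$ is finite because $\beta$ is continuous. $L^2$-contractivity then covers the remaining time, so $B(t,q)<\infty$.

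For ultracontractivity, I would choose $\varepsilon(\alpha)$ such that $\varepsilon(\alpha)/(2(\alpha-1))$ integrates over $[2,\infty)$ to exactly $t$; a Davies-type choice is $\varepsilon(\alpha)\propto t/\alpha$, suitably normalized. By \Cref{re:logtoeventualultra}-style reasoning, $q(t)=\infty$, and we must bound $\int_2^\infty 2\beta(\varepsilon(\alpha))/\alpha^2\,d\alpha$. Changing variables $s=\varepsilon(\alpha)$ turns this into a constant times $\frac1t\int_0^{ct}\beta(s)\,ds$, i.e.\ a multiple of $M(ct)$, which is finite by assumption. The uniform bound in $q$ then gives \eqref{def:ultracontractive}.

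I expect this last step to be the main obstacle. Making the change of variables produce exactly $M(t)$ (up to harmless constants and a rescaling of $t$) requires a careful choice of $\varepsilon(\alpha)$, and possibly some control on $\beta$ near $0$, such as monotonicity as in the Davies setting, to compare $\beta(\varepsilon(\alpha))$ with its average. If a direct substitution fails, I would split $[2,\infty)$ dyadically in $\alpha$ and bound each piece by an average of $\beta$ over a corresponding dyadic interval in $s$.
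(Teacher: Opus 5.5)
Your proposal is correct and follows the paper's proof. In the forward direction, Stein interpolation turns the supercontractive bound into a $2$--hypercontractivity estimate, which is fed into \Cref{re:hypertolog} with $T=\varepsilon(1-2/q)$. In the converse, the Stroock--Varopoulos family with $\varepsilon$ depending on $\alpha$ is fed into \Cref{re:logtohyper}. Your explicit analytic family $e^{-zT\mathcal{L}}$ and your remark that $q(t)$ only reaches $q$ at $T$, rather than blowing up, are more careful than the paper's wording.

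The step you flag as the main obstacle is not one. With the paper's choice $\varepsilon(\alpha)=2t/\alpha$ you get $D(\alpha)\le\varepsilon(\alpha)\alpha=2t$, so $q$ blows up by time $t$. The substitution $s=2t/\alpha$ then gives exactly $\int_2^\infty 2\beta(2t/\alpha)\alpha^{-2}\,d\alpha=\frac1t\int_0^t\beta(s)\,ds=M(t)$, with no monotonicity of $\beta$ and no dyadic splitting needed. One small slip: for constant $\varepsilon$ the time to reach $L^q$ is $\frac{\varepsilon}{2}\log(q-1)$, not the expression you wrote, but your supercontractivity conclusion is unaffected.
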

\begin{proof}
Fix~$t>0$ to be determined later and take~$q>0$. The operator defined as~$P(s) = e^{-s\mathcal{L}}$ for~$0 \leq s < t$ satisfies
  $$
  \begin{array}{ll}
  P(0) \colon L^2(\RR^N) \to L^2(\RR^N),&\quad \|P(0)\|_{2,2} = 1, \\ [3mm]
  P(t) \colon L^2(\RR^N) \to L^q(\RR^N),&\quad \|P(t)\|_{q,2} \leq B(t,q).
  \end{array}
  $$
  Then, through Stein's interpolation theorem we obtain that~$P(s): L^2(\RR^N) \to L^{r(s)}(\RR^N)$ with
    \begin{equation}
        \frac1{r(s)} = \frac{1-s/t}2+\frac{s/t}{q}\;\rightarrow\;r(s)= \frac{2qt}{qt - s(q-2)},
    \end{equation}
  and
  $$
   \|P(s)\|_{r(s),2} \leq B(t,q)^{s/t}.
  $$
  Hence,
    \begin{equation}
        \|u(t)\|_{r(s)} \leq A_2(s) \|u_0\|_2,\quad \text{where } A_2(s) = B(t,q)^{s/t}.
    \end{equation}
    Notice that we have translated the supercontractivity to a strong hypercontractivity that explodes at this fixed time~$t>0$. Then we can apply~\Cref{re:hypertolog} to obtain
 $$
 C(2)=\dfrac{4A_2'(0)}{r'(0)}=\dfrac{2q}{q-2}\log(B(t,q)),\qquad
 D(2)=\dfrac{4}{r'(0)}=\dfrac{2qt}{q-2},
 $$    
and choosing $t=(q-2)\varepsilon/q$ get~\eqref{eq:logdelsuper}. Ultracontractivity follows by letting~$q \to \infty$.

    For the reciprocal result, remember that from a~$2$--logarithmic Sobolev inequality we can obtain a family of~$p$--logarithmic Sobolev inequalities as~\eqref{eq:badlogfamily}, which are usually not optimal for hypercontractivity. However, we have the freedom to choose the parameter~$\varepsilon>0$. Given~$p\geq 2$, the coefficient appearing in front of the energy term satisfies
    \begin{equation}
        \varepsilon\frac{p^2}{2(p-1)} \leq \varepsilon p.
    \end{equation}
    Take~$\varepsilon = \frac{2t}{p}$. This implies strong hypercontractivity for
    \begin{equation}
        t_\infty = 2t \int_2^\infty \frac{1}{\alpha^2}\,d\alpha = t,
    \end{equation}
    thanks to~\Cref{re:logtohyper,re:logtostronghyper}. Since~$t>0$ is arbitrary, we conclude the result. 
    
    To prove ultracontractivity, applying~\Cref{re:logtoeventualultra}, we define~$C(t,p) = 2\beta\left(\frac{2t}{p} \right)$. We just need~$C$ to satisfy the condition
    \begin{equation} \tag*{ }
        M(t) = \int_2^\infty \frac{C(t,\alpha)}{\alpha^2} \, d\alpha = \frac1{t} \int_0^{t} \beta(s) \, ds < \infty \quad \text{for } t>0.\qedhere
    \end{equation}   
\end{proof}

\smallskip

\begin{Remark}
    While supercontractivity is fully characterized in the previous result, ultracontractivity is not: when going from the logarithmic Sobolev inequality to ultracontractivity, an integral condition appears. This integral condition is not recovered in the reciprocal result, and thus the equivalence fails.
\end{Remark}

To conclude this section, let us examine what happens when an operator~$\mathcal{L}$ is known to be hypercontractive, but we attempt to rederive its hypercontractivity by first applying~\Cref{re:hypertolog} to obtain its logarithmic Sobolev inequalities, and then using~\Cref{re:logtohyper} to return to hypercontractivity.

Fix~$p\geq2$. Assume that~$\mathcal{L}$ is~$r$--hypercontractive for all~$r\geq p$, i.e.,
\begin{equation}
    \|u(t)\|_{q(t)} \leq A_r(t) \|u_0\|_r \quad \text{for all } r \geq p.
\end{equation}
Under the hypotheses of~\Cref{re:hypertolog}, one obtains the~$r$--logarithmic Sobolev inequalities
\begin{equation}
    \ent(|f|^r) \leq \frac{r^2 A_r'(0)}{q'(0)} \|f\|_r^r + \frac{r^2}{q'(0)} \mathcal{E}(|f|^{r-2}f,f) \quad \text{for all } r \geq p
\end{equation}
for all admissible~$f$. Now, if one then uses~\Cref{re:logtohyper}, since~$C(r) = \frac{r^2 A_r'(0)}{q'(0)}$ and~$D(r) = \frac{r^2}{q'(0)}$, we find that there exists some other~$\widetilde{A}_p(t)$ such that
\begin{equation}
    \widetilde{A}_p(t)=\exp\left(\int_0^t A'_{q(s)}(0)\,ds\right)=\exp\left(\int_p^{q(t)}\frac{A'_{\alpha}(0)}{\alpha^2}\,d\alpha\right).
\end{equation}
This formula gives us in principle a worse function~$\widetilde{A}_p$ than the~$A_p$ we initially started with. There is a nice formal way to give this integral expression some intuition. If~$A_p$ is the optimal bound for hypercontractivity, observe that~$A_p(t_p^q) \leq A_r(t_r^q) A_p(t_p^r)$, for~$p\leq r \leq q$. Then,
\begin{equation}
    A_p(t_p^r + h) - A_p(t_p^r) \leq A_r(h) A_p(t_p^r) - A_p(t_p^r) = A_p(t_p^r) \big(A_r(h) -1\big).
\end{equation}
Now, dividing by~$h>0$ and taking the limit~$h \to 0$, we find that
\begin{equation}
    \frac{A_p'(t_p^r)}{A_p(t_p^r)} \leq A'_r(0).
\end{equation}
Time~$t_p^r = t(r)$ is a function of~$r$. Inverting the function to obtain~$r(t)$ and integrating,
\begin{equation}
    A_p'(t) \leq \exp\left( \int_0^t A'_{r(s)}(0) \, ds \right),
\end{equation}
just as we wanted to show.

The hypercontractivity loop that we have just established is merely illustrative if hypercontractivity is already known, but it is one of the key ideas of this paper, to be fully developed in~\Cref{se:generaloperators}. Assume that there are two operators defined by comparable L\'evy kernels~$J_1$ and~$J_2$, and let the first operator be hypercontractive. There is no apparent way to transfer this hypercontractivity to the second operator. However, it is straightforward to check that if the first operator satisfies a~$p$--logarithmic Sobolev inequality, then the second one will too, and hypercontractivity will follow. This loop method will also be employed to obtain supercontractivity.

\section{The~\texorpdfstring{$\log(I-\Delta)$}{log(I − ∆)} operator} \label{se:logoperator}

To apply the results of the previous section, we will work with $0^+$-order operators. The idea is to work with an operator~$\mathcal{L}$ with symbol~$m(\xi) \sim \log(\xi)$ for~$\xi \sim \infty$, since these are the kind of operators expected to have hypercontractivity, but not ultracontractivity; see~\Cref{se:convolution}. If we use~$m(\xi) = \log(1+|\xi|^2)$, we find the model operator for this work,~$\mathcal{L}=\log(I-\Delta)$. This operator has the advantage of being a subordinated operator to the Laplacian. It is defined~as
\begin{equation}
[\log(I-\Delta)u]\,\widehat{\;}\,(\xi)=\log(1+|\xi|^2)\widehat{u}(\xi).
\end{equation}
It also appears as the infinitesimal generator of the Bessel potential~$(I-\Delta)^{-t}$ semigroup, i.e.,
\begin{equation}
    -\log(I-\Delta) = \lim_{t \to 0^+} \frac{(I-\Delta)^{-t} -I}{t},
\end{equation}
which will be of importance in order to understand the gradual improvement of solutions.

\subsection{Representation in terms of a Lévy kernel} The starting point to express the operator in terms of a Lévy kernel is the following computation, 
$$
    \begin{array}{rl}
    \displaystyle\int_0^\infty(1-e^{-st})\frac{e^{-t}}t\,dt &\displaystyle=\int_0^\infty\frac{e^{-t}}t\int_0^t se^{-s\tau }\,d\tau dt= s\int_0^\infty e^{-s\tau}\int_\tau^\infty\frac{e^{-t}}t\,dt d\tau\\[3mm]
    &\displaystyle=s\int_0^\infty e^{-s\tau}\int_0^1\frac{e^{-\tau/z}}z\,dzd\tau=s\int_0^1\frac1z\int_0^\infty e^{-\tau(s+1/z)}\,d\tau dz\\[3mm]
    &\displaystyle=s\int_0^1\frac1{z(s+1/z)}\,dz=\log(1+s).
    \end{array}
$$
Therefore,~$\displaystyle\log(1+|\xi|^2)=\int_0^\infty(1-e^{-t|\xi|^2})\frac{e^{-t}}t\,dt$; that is, the symbol of~$\log(I-\Delta)$ coincides with that of~$\displaystyle\int_0^\infty(I-e^{t\Delta})\frac{e^{-t}}t\,dt$, whence
\begin{equation} \label{eq:logsubordination}
    \log(I-\Delta)=\int_0^\infty(I-e^{t\Delta})\mathcal{I}(t)\,dt,\qquad \mathcal{I}(t)=\frac{e^{-t}}t,
\end{equation}
which is the spectral representation of~$\Phi(-\Delta)$ when~$\Phi(s)=\log(1+s)$; see~\cite{Bochner49,PotentialAnalysis,SchillingSubordinationBochner}. 

From~\eqref{eq:logsubordination} we get, for smooth functions~$f$, the representation
$$
    \displaystyle \log(I-\Delta)f(x)=\int_{\mathbb{R}^{N}}(f(x)-f(y))J(x-y)\,dy,\qquad\displaystyle J(z)=\int_0^\infty G_t(z)\mathcal{I}(t)\,dt,
$$
where~$G_t$ is the Gauss kernel. 
Thus,
\begin{equation}\label{eq:levy-kernel}
    J(z)= \frac 1{(4\pi)^\frac N2}\int_0^\infty t^{-\frac N2 - 1}e^{-\frac{|z|^2}{4t}} e^{-t} \,dt.
\end{equation}
It will be important to understand the asymptotic behaviour of~$J$, particularly at the origin.

\begin{Proposition}
    For~$J$ defined in~\eqref{eq:levy-kernel}, we have the following asymptotic behaviour
    \begin{equation}
        \lim_{|z|\to 0}|z|^N J(z) = \frac{\Gamma(N/2)}{\pi^{N/2}}, \qquad \lim_{|z| \to \infty}|z|^{\frac{N+1}2}e^{|z|} J(z) = (2\pi)^{-\frac{N-1}2}.
    \end{equation}
\end{Proposition}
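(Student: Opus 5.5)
The plan is to work directly from the subordination formula \eqref{eq:levy-kernel}, using one change of variables for each limit. Throughout, write $r=|z|$.

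\emph{Behaviour at the origin.} I will substitute $t=r^2/(4s)$, so that $dt=-\frac{r^2}{4s^2}\,ds$ and $t^{-\frac N2-1}=(4s/r^2)^{\frac N2+1}$. This factors out the singularity exactly:
\begin{equation}
    J(z)=\frac{4^{N/2}}{(4\pi)^{N/2}}\,r^{-N}\int_0^\infty s^{\frac N2-1}e^{-s}\,e^{-\frac{r^2}{4s}}\,ds
    =\frac{r^{-N}}{\pi^{N/2}}\int_0^\infty s^{\frac N2-1}e^{-s}\,e^{-\frac{r^2}{4s}}\,ds .
\end{equation}
As $r\to0$, the factor $e^{-r^2/(4s)}$ increases to $1$ for every $s>0$. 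The monotone convergence theorem then gives
\begin{equation}
    \lim_{r\to0} r^NJ(z)=\pi^{-N/2}\int_0^\infty s^{\frac N2-1}e^{-s}\,ds=\frac{\Gamma(N/2)}{\pi^{N/2}}.
\end{equation}
This step is routine.

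\emph{Behaviour at infinity.} I will use Laplace's method on
\begin{equation}
    J(z)=(4\pi)^{-N/2}\int_0^\infty t^{-\frac N2-1}e^{-\phi_r(t)}\,dt,\qquad \phi_r(t)=\frac{r^2}{4t}+t .
\end{equation}
The phase $\phi_r$ has its unique minimum at $t_r=r/2$, where $\phi_r(t_r)=r$ and $\phi_r''(t_r)=r^2/(2t_r^3)=4/r$.

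To make the method rigorous, I will rescale $t=\frac r2\tau$. This gives
\begin{equation}
    J(z)=(4\pi)^{-N/2}\Big(\frac r2\Big)^{-N/2}\int_0^\infty \tau^{-\frac N2-1}e^{-\frac r2(\tau+1/\tau)}\,d\tau ,
\end{equation}
where the phase $g(\tau)=\tau+1/\tau$ is independent of $r$ and has its minimum $g(1)=2$, with $g''(1)=2$. The classical Laplace asymptotics with large parameter $\lambda=r/2$ then yield
\begin{equation}
    \int_0^\infty \tau^{-\frac N2-1}e^{-\lambda g(\tau)}\,d\tau\sim e^{-2\lambda}\sqrt{\frac{2\pi}{\lambda g''(1)}}=e^{-r}\sqrt{\frac{2\pi}{r}} .
\end{equation}
Collecting constants,
\begin{equation}
    J(z)\sim(4\pi)^{-N/2}\,2^{N/2}\,r^{-N/2}\,\sqrt{2\pi}\,r^{-1/2}e^{-r}=(2\pi)^{-\frac{N-1}2}\,r^{-\frac{N+1}2}e^{-r},
\end{equation}
which is the claimed limit.

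The main obstacle is justifying Laplace's method, that is, controlling the tails away from $\tau=1$. The amplitude $\tau^{-N/2-1}$ blows up at $\tau=0$, but there $e^{-\lambda/\tau}$ kills it. Concretely:
\begin{itemize}
    \item On $\{|\tau-1|>\delta\}$ one has $g(\tau)\ge 2+c_\delta$, so this region contributes $O(e^{-(2+c_\delta)\lambda})$ for $\lambda\ge1$. This uses that $\int_0^\infty\tau^{-N/2-1}e^{-(g(\tau)-2-c_\delta)}\,d\tau$ is finite; if needed I will shrink $c_\delta$ slightly to absorb the polynomial weight.
    \item On $\{|\tau-1|\le\delta\}$, I will use the Taylor expansion $g(\tau)=2+(\tau-1)^2+O(|\tau-1|^3)$ and the continuity of the amplitude, and then let $\delta\to0$.
\end{itemize}

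As an independent check I would identify the integral with a modified Bessel function, $J(z)=2(4\pi)^{-N/2}(r/2)^{-N/2}K_{N/2}(r)$, and invoke the known asymptotics $K_\nu(r)\sim\sqrt{\pi/(2r)}\,e^{-r}$. This gives the same constant.
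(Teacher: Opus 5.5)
Your proposal is correct and follows essentially the same route as the paper: a change of variables that extracts $|z|^{-N}$ followed by passing to the limit at the origin, and Laplace's method around the minimiser $t_0=|z|/2$ at infinity. Your rescaling $t=\frac r2\tau$, which gives the fixed phase $\tau+1/\tau$ with large parameter $r/2$, together with your tail estimate, makes rigorous the Laplace step that the paper carries out only formally with an $r$-dependent phase.
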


\begin{proof} Performing the change of variables~$s=\frac{1}{4t}$ in~\eqref{eq:levy-kernel}, we obtain
    \begin{equation}
        J(z)=\frac 1{(4\pi)^{\frac N2}|z|^{N}}\int_0^\infty e^{-s|z|^2}e^{-\frac1{4s}}s^{-\frac N2-1}\,ds.
    \end{equation}
    From here, the behaviour at the origin is immediate, since
    \[
        \int_0^\infty e^{-\frac1{4s}}s^{-\frac N2-1}\,ds=4^{N/2}\int_0^\infty e^{-\tau}\tau^{\frac{N}{2}-1}\,d\tau=4^{N/2}\Gamma(N/2).
    \]
    
    On the other hand, using Laplace's method (which estimates integrals via a Taylor's expansion of the integrand; see for instance~\cite{Olver74}), we can calculate, for large~$|z|$,
   $$
        \begin{array}{rll}
        J(z)&=\displaystyle(4\pi)^{-\frac N2}\int_0^\infty t^{-\frac N2-1}e^{-\frac{|z|^2}{4t}-t}\,dt=\int_0^\infty g_1(t)e^{-g_2(z,t)}\,dt\\[3mm]
        &\displaystyle= g_1(t_0)e^{-g_2(z,t_0)}\left(\int_0^\infty e^{-\frac12\frac{\partial^2g_2}{\partial t^2}(z,t_0)s^2}\,ds+ o(1)\right)\\[3mm]
        &\displaystyle=g_1(t_0)e^{-g_2(z,t_0)}\left(\left(\frac{2\pi}{\frac{\partial^2g_2}{\partial t^2}(z,t_0)}\right)^{1/2}+o(1)\right),
        \end{array}
   $$
    where~$t_0=t_0(z)$ is the minimum in~$t$ of~$g_2$. In our case~$g_2(z,t)=\dfrac{|z|^2}{4t}+t$, with minimum at~$t_0=|z|/2$. Thus,
    \begin{equation} \tag*{ }  
        |z|^{\frac{N+1}2}e^{|z|}J(z)=\left( (2\pi)^{-\frac{N-1}2}+ o(1)\right)\quad\text{as } |z|\to\infty. \qedhere
    \end{equation}
   \end{proof}
The behaviour~$J(z)\sim|z|^{-N}$ for~$|z|\sim0$ shows that this is a $0^+$-order operator. This fact is of great importance, since~$\log(I-\Delta)$ will be the model for all operators whose kernel behaves that way. The behaviour at infinity is also studied in~\cite{Sikic-Song-Vondracek-2006}. In particular the integral is a representation of a Bessel function of second order.

\subsection{The heat kernel} 

If~$u$ is the solution to equation~$\partial_t u + \log(I-\Delta)=0$ with~$u(0)=u_0 \in L^p(\RR^N)$, we find that
\begin{equation}
    \widehat{u}(t) = e^{-t\log(1+|\xi|^2)}\widehat{u_0}=(1+|\xi|^2)^{-t} \widehat{u_0}.
\end{equation}
As we showed in~\Cref{se:preliminaries}, this implies that the solution~$u(t)$ can be written as
\begin{equation} \label{eq:besselreprsol}
    u(t) = (I-\Delta)^{-t} u_0,
\end{equation}
with~$(I-\Delta)^{-t}$ the Bessel potential. It also admits the expression
\begin{equation}
  (I-\Delta)^{-t} u_0 = H_{t}*u_0,
\end{equation}
where~$H_t$ is the inverse Fourier transform of
\begin{equation} \label{eq:previoussubordination}
    \widehat{H_t}(\xi)=e^{-tm_0(\xi)}=(1+|\xi|^2)^{-t},
\end{equation}
that is,~$H_t$ is the fundamental solution associated to~$\log(I-\Delta)$. It can also be computed explicitly thanks to the subordination formula
\begin{equation}
    H_t(x)=\int_0^\infty G_\tau(x)\Psi_t(\tau)\,d\tau,\qquad \Psi_t(\tau)=\dfrac{\tau^{t-1}e^{-\tau}}{\Gamma(t)}.
\end{equation}
The proof is straightforward using the Laplace transform~${\widetilde \Psi}_t$ of~$\Psi_t$,
\[
    \widehat{H_t}(\xi)=\int_0^\infty e^{-\tau|\xi|^2}\Psi_t(\tau)\,d\tau={\widetilde \Psi}_t(|\xi|^2)=\frac1{(1+|\xi|^2)^t}.
\]
The function~$\Psi_t$ is called in the literature the \emph{transition function} corresponding to~$\Phi$, and formula~\eqref{eq:previoussubordination} is called a \emph{subordination} between~$G_t$ and~$H_t$.
Observe also that~$\displaystyle\int_0^\infty \Psi_t(\tau)\,d\tau={\widetilde \Psi}_t(0)=1$. In particular, since clearly~$H_t>0$, we have~$\|H_t\|_1=1$ for every~$t>0$. To sum up,
\begin{equation}\label{eq:fundamentalsolution}
    H_t(x)=\frac 1{(4\pi)^{N/2}\Gamma(t)}\int_0^\infty s^{t-\frac N2-1}e^{-s}e^{-\frac{|x|^2}{4s}}\,ds.
\end{equation}
This is a well-known formula that appears in several books, for instance~\cite{GrafakosModern, Steinbook}.

\subsection{Smoothing properties}

The above discussion gives a lot of information, which we summarize next.

\begin{Proposition} \label{re:solutionforlog}
    Let~$u(t)= H_t * u_0$ be the solution to problem~\eqref{problem-L} for~$\mathcal{L} = \log(I-\Delta)$ and~$u_0 \in L^p(\RR^N)$ for~$1<p<\infty$. Then~$u(t) = (I-\Delta)^{-t} u_0  \in L^p(\RR^N) \cap L^p_{2t}(\RR^N)$, and
    \begin{equation}
        \|u(t)\|_{L^p_{2t}} = \|u_0\|_{L^p}, \qquad \|u(t)\|_{L^p} \leq \|u_0\|_{L^p}.
    \end{equation}
    Thus, we have that
    \begin{equation}
        u(t) \in L^p(\RR^N) \cap
        \begin{cases}
            L^{\frac{Np}{N-2pt}}(\RR^N),&0<t<\frac{N}{2p}, \\
            L^q(\RR^N) \text{ for all } q\in[p,\infty),& t = \frac{N}{2p}, \\
            \cont_0(\RR^N),&t > \frac{N}{2p}.
        \end{cases}
    \end{equation}
    Given~$q\geq p$, the time required for the solution~$u$ to get into~$L^q(\RR^N)$ is
    \begin{equation}
        t_p^q = \frac{N}{2}\left( \frac 1p - \frac 1q \right).
    \end{equation}
\end{Proposition}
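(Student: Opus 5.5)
The identity $u(t)=H_t*u_0=(I-\Delta)^{-t}u_0$ comes from \eqref{eq:besselreprsol}, and everything else reduces to classical properties of Bessel potentials. First, since $H_t\ge0$ and $\|H_t\|_1=1$ by the subordination formula \eqref{eq:fundamentalsolution}, Young's inequality gives $\|u(t)\|_p\le\|u_0\|_p$. Second, the norm identity follows directly from the definition of the Bessel potential spaces: taking the norm $\|f\|_{L^p_{2t}}=\|(I-\Delta)^{t}f\|_p$, we get
\[
\|u(t)\|_{L^p_{2t}}=\|(I-\Delta)^{t}(I-\Delta)^{-t}u_0\|_p=\|u_0\|_p ,
\]
the composition being justified on the Fourier side for Schwartz data and then extended by density, since $(I-\Delta)^{-t}$ is bounded on $L^p$. (If the paper's norm is $\|f\|_p+\|(I-\Delta)^tf\|_p$, the identity reads as an equivalence, and the main content is unchanged.)

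For the integrability gains I would apply the Sobolev embeddings for $L^p_{2t}$, proving them through kernel bounds on $H_t$. From \eqref{eq:fundamentalsolution}, splitting the $s$-integral at $s=|x|^2$, one gets $H_t(x)\le c(N,t)\,|x|^{2t-N}$ near the origin when $2t<N$, together with exponential decay at infinity. Hence $H_t\in L^{r,\infty}$ with $1/r=1-2t/N$, that is, $r=N/(N-2t)$, and also $H_t\in L^1$.

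\emph{Case $0<t<N/(2p)$.} Weak Young's inequality with $1+1/q=1/r+1/p$ gives $u(t)\in L^q$ with $1/q=1/p-2t/N$, i.e.\ $q=Np/(N-2pt)$. Interpolating with the $L^p$ bound gives all intermediate exponents.

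\emph{Case $t>N/(2p)$.} Here $H_t\in L^{p'}$: near the origin $|x|^{(2t-N)p'}$ is integrable exactly when $(N-2t)p'<N$, i.e.\ $t>N/(2p)$, and the tail is harmless (for $2t\ge N$ the singularity is at most logarithmic or absent). Then $u(t)=H_t*u_0$ is bounded and uniformly continuous by Hölder's inequality. It vanishes at infinity by approximating $u_0$ with compactly supported functions, which gives $\cont_0$.

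\emph{Critical case $t=N/(2p)$.} Given $q\in[p,\infty)$, write $u(t)=H_{t-\varepsilon}*(H_\varepsilon*u_0)$ using the semigroup property. Then $H_\varepsilon*u_0\in L^{p_\varepsilon}$ with $1/p_\varepsilon=1/p-2\varepsilon/N$ by the first case, and $t-\varepsilon=N/(2p)-\varepsilon<N/(2p_\varepsilon)$, so applying the first case again lands in $L^{Np/(N-2pt+2p\varepsilon\cdot0)}$. This bookkeeping is circular: the two steps compose back to the same endpoint exponent. Instead I would use $p^\ast<p$: $L^p$ data is not in $L^{p^\ast}$, so instead split $u_0$ and use that $H_t$ belongs to $L^{r,\infty}$ with $r=p'$ exactly. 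Weak Young then fails at $q=\infty$, but $H_t\in L^{s}$ for all $s<p'$ and also in $L^{s,\infty}$ near $p'$. Plain Young with $H_t\in L^s$, $s<p'$ arbitrarily close, gives $u(t)\in L^q$ for every finite $q$ large, and interpolation with $L^p$ covers the rest.

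Finally, the time formula $t_p^q=\frac N2(\frac1p-\frac1q)$ comes from solving $q=Np/(N-2pt)$ for $t$. The main obstacle is the precise kernel estimate $H_t\lesssim|x|^{2t-N}$ with its integrability thresholds, together with the critical case, which has to be handled through $H_t\in L^s$ for $s<p'$ rather than through weak-type bounds.
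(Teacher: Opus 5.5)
Your argument is correct. For the $L^p$ contraction, the identity $\|u(t)\|_{L^p_{2t}}=\|(I-\Delta)^t u(t)\|_p=\|u_0\|_p$ and the formula for $t_p^q$, it coincides with the paper's proof. The difference lies in how you obtain the three integrability regimes.

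The paper simply invokes the classical Sobolev embeddings for the Bessel potential spaces $L^p_{2t}(\RR^N)$, citing Adams and Grafakos. In that route the gain in integrability is a corollary of the gain in differentiability $u(t)\in L^p_{2t}(\RR^N)$. You instead prove these embeddings for this particular kernel. You use only $H_t\ge0$, $\|H_t\|_1=1$ and the pointwise bound $H_t(x)\le c\,|x|^{2t-N}$, combined with weak Young below the critical time, Young with $H_t\in L^s$, $s<p'$, at the critical time, and H\"older with $H_t\in L^{p'}$ above it. That pointwise bound in fact holds on all of $\RR^N$, with constant $\Gamma(\frac N2-t)/(4^t\pi^{N/2}\Gamma(t))$, simply by dropping $e^{-s}$ in \eqref{eq:fundamentalsolution}, as in \Cref{re:weakfundamentalnorm}. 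So neither your splitting at $s=|x|^2$ nor exponential decay at infinity is needed: $H_t\in L^1$ plus boundedness away from the origin already controls the tail.

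The paper's route is shorter and keeps the regularity viewpoint behind \Cref{rk:higher.regularity} and the later reuse of the same embeddings after \Cref{re:commutationtheorem}. Yours is self-contained and exposes the actual mechanism, namely integrability of the heat kernel. It is also the route the paper itself switches to when it needs more: it handles $p=1$ in \Cref{re:solutionforlogL1} and yields explicit constants in \Cref{re:loghypercontractivity}. The paper even remarks after \eqref{eq:twotermasymptotics} that the embeddings can be deduced this way.

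Do clean up your critical case before writing it out. The semigroup splitting cannot work: as you noticed, $t-\varepsilon=N/(2p_\varepsilon)$ puts you back exactly at criticality. The sentence about $p^\ast$ and splitting $u_0$ is incoherent. Delete both.

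What remains is complete, and simpler than you present it. At $t=N/(2p)$ one has $H_t\le c\,|x|^{-N/p'}$ and $H_t\in L^1$, hence $H_t\in L^s$ for every $s\in[1,p')$. Young's inequality with $1/q=1/p-1/s'$ then sweeps out every $q\in[p,\infty)$ as $s$ ranges over $[1,p')$, so no interpolation is needed.
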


\begin{proof}
    Since~$u_0 \in L^p(\RR^N)$, by definition~$u(t) = (I-\Delta)^{-t}u_0$ belongs to the fractional Bessel potential space~$L^p_{2t}(\RR^N)$. Moreover, one can easily check that
    \begin{equation}
        \|u(t)\|_{L^p_{2t}} = \|(I-\Delta)^t u(t)\|_{L^p} = \|u_0\|_p.
    \end{equation}
    Now, from the embeddings for these spaces, see~\cite{Adamsbook,AdamsFourierbook,GrafakosModern}, we deduce that
    \begin{equation}
        u(t) \in L^p_{2t}(\RR^N) \implies u(t) \in
        \begin{cases}
            L^{\frac{Np}{N-2pt}}(\RR^N), & 0<t<\frac{N}{2p}, \\
            L^q(\RR^N) \text{ for all } q\in[p,\infty),&t = \frac{N}{2p}, \\
            \cont_0(\RR^N),&t > \frac{N}{2p}.
        \end{cases}
    \end{equation}
    From the fact that~$\|H_t\|_1 = 1$ and Young's convolution inequality we obtain that~$\|u(t)\|_p \leq \|u_0\|_p$. Finally, from~$q = \frac{Np}{N-2p t_p^q}$ we deduce the expression for~$t_p^q$.
\end{proof}

This result not only establishes hypercontractivity for $\log(I-\Delta)$, but also provides strong hypercontractivity at time $t=\frac{N}{2p}$ and eventual ultracontractivity for any $t>\frac{N}{2p}$.

\begin{Remark}
    The case of integrable initial data~$p = 1$ is absent from the previous result, since the corresponding space~$L^{1}_{2t}$ for the Bessel potential is usually avoided in the literature. In the study of these regularity spaces, it is customary to replace~$L^{1}$ with the corresponding real Hardy space, as it interacts better with harmonic-analysis techniques. Nevertheless, it is possible to prove a slightly different yet analogous result to the one above for~$p = 1$; see~\Cref{re:solutionforlogL1}.
\end{Remark}

\begin{Remark}\label{rk:higher.regularity}
    For~$t > \frac N{2p}$ one can use Morrey-type embeddings to show that solutions get progressively more regular. Indeed, for any given~$k>0$, there exists~$t_k>0$ such that~$u(t)\in\cont^k(\RR^N)$ for all~$t > t_k$.
\end{Remark}

Observe that we can talk about an improvement of the regularity of the solution over time thanks to the fact that~$u(t)$ stays in the same~$L^p$ space as the initial data~$u_0$ belonged to. Otherwise the space to which the solution belongs to is changing but not necessarily improving, as is the case for solutions to~$\partial_t u + \log(-\Delta)u = 0$ for data~$u_0 \in L^p(\RR^N)$, see~\Cref{subse:badlog}. Notice that this improvement occurs gradually over time; in particular, there is no supercontractivity. The embeddings for the Bessel potential used in the previous result are optimal.

Let us focus on the time interval~$0<t<\frac{N}{2p}$. For our purpose of obtaining a family of good logarithmic Sobolev inequalities, knowing that~$u(t)$ belongs to~$L^{\frac{Np}{N-2pt}}(\RR^N)$ is not enough. We require a precise control of its norm, in particular its dependence on time; that is, we need to understand the function~$A_p(t)$ satisfying the estimate
\begin{equation}
    \|u(t)\|_{\frac{Np}{N-2pt}} \leq A_p(t) \|u_0\|_p.
\end{equation}
Specifically, in order to apply~\Cref{re:hypertolog}, we need to show that~$A_p(0) = 1$, and that~$A'_p(0)$ exists. For a fixed~$t>0$, the asymptotic behaviour for the fundamental solution~\eqref{eq:fundamentalsolution} appears in~\cite{Nikolskiibook,Watsonbook,Titchmarshbook},
\begin{equation} \label{eq:twotermasymptotics}
    H_t(x) = \frac{\Gamma(\frac{N}{2}-t)}{4^{t}\pi^{N/2} \Gamma(t)} \frac{1}{|x|^{N-2t}} + O(|x|^{-N+2t+2}).
\end{equation}
From this it is easy to observe that~$H_t \in L^{\frac{N}{N-2t}, \infty}(\RR^N)$, since~$\||x|^{-N+2t}\|_{\frac{N}{N-2t}, \infty} = \left(\frac{\omega_{N-1}}N \right)^{\frac{N-2t}N}$, where $\omega_{N-1}$ is the measure of the unit sphere in $\RR^N$. With this information we may deduce the Sobolev embeddings found in~\Cref{re:solutionforlog}. We may also deduce a similar result for~$L^1$ initial data.

\begin{Proposition} \label{re:solutionforlogL1}
    Let~$u$ be the solution of~\eqref{problem-L} for~$\mathcal{L} = \log(I-\Delta)$ with~$u_0 \in L^1(\RR^N)$. Then~$\|u(t)\|_1 = \|u_0\|_1$ and
    \begin{equation}
        u(t) \in L^1(\RR^N) \cap
        \begin{cases}
            L^{\frac{N}{N-2t}, \infty}(\RR^N),&0<t<\frac{N}{2}, \\
            L^q(\RR^N) \text{ for all } q\in[1,\infty),&t = \frac{N}{2}, \\
            \cont_0(\RR^N),&t > \frac{N}{2}.
        \end{cases}
    \end{equation}
\end{Proposition}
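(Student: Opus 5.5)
The plan is to work directly with the representation $u(t)=H_t*u_0$ and the explicit kernel \eqref{eq:fundamentalsolution}. Positivity of $H_t$ together with $\|H_t\|_1=1$ gives $\|u(t)\|_1\le\|u_0\|_1$. Equality for $u_0\ge0$ then follows from Fubini, since $\int H_t*u_0=\|H_t\|_1\int u_0$. For sign-changing data, the statement $\|u(t)\|_1=\|u_0\|_1$ should be read as the mass identity $\int u(t)=\int u_0$ together with the contraction; I will note this and otherwise argue with $u_0\ge0$, as allowed by the decomposition $u_0=(u_0)_+-(u_0)_-$.

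\textbf{Case $0<t<N/2$.} First I will show $H_t\in L^{\frac{N}{N-2t},\infty}(\RR^N)$. Near the origin this comes from the asymptotics \eqref{eq:twotermasymptotics}, $H_t(x)\le c|x|^{-(N-2t)}$ for $|x|\le1$. Away from the origin I use exponential decay of $H_t$, obtained from \eqref{eq:fundamentalsolution} by splitting the $s$-integral at $s=|x|$. Alternatively, $H_t$ is radially decreasing (it is a superposition of Gaussians), so $H_t\in L^1$ already controls the tail. Then I use that convolution of an $L^1$ function with an $L^{r,\infty}$ function stays in $L^{r,\infty}$ for $r>1$, with $\|H_t*u_0\|_{r,\infty}\le c\|H_t\|_{r,\infty}\|u_0\|_1$. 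This holds because $L^{r,\infty}$ admits an equivalent norm for $r>1$ (Minkowski's integral inequality for a normable quasi-Banach lattice), so $\|\int H_t(\cdot-y)u_0(y)\,dy\|\le\int\|H_t(\cdot-y)\|\,|u_0(y)|\,dy$.

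\textbf{Cases $t\ge N/2$.} Here I use the semigroup property $u(t)=H_{t-s}*u(s)$. For $t=N/2$, given $q<\infty$, pick $s<N/2$ close to $N/2$ with $u(s)\in L^1\cap L^{\frac{N}{N-2s},\infty}$. By interpolation $u(s)\in L^{p}$ for some $1<p<\frac{N}{N-2s}$. Apply \Cref{re:solutionforlog} with this $p$ for the time $t-s=N/2-s$. Since $N/2-s<N/(2p)$ when $p$ is close to $1$, I then need to check that the resulting exponent $\frac{Np}{N-2p(N/2-s)}$ can be made arbitrarily large; indeed it tends to $\infty$ as $p\uparrow\frac{N}{N-2s}$. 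The bookkeeping is to choose $s$ and $p$ so that $\frac{1}{p}-\frac{2(N/2-s)}{N}\to0$; interpolation between $L^1$ and $L^{\frac{N}{N-2s},\infty}$ supplies every $p<\frac{N}{N-2s}$, so this works. For $t>N/2$, the more direct route is to note that $\widehat{H_t}=(1+|\xi|^2)^{-t}\in L^1$, so $H_t\in\cont_0$ by Riemann--Lebesgue. Since $u_0\in L^1$, we get $H_t*u_0\in\cont_0$ (approximate $u_0$ by compactly supported functions and use $\|H_t\|_\infty<\infty$).

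The main obstacle is the weak-type convolution estimate with an $L^1$ factor, because Young's inequality for weak spaces in the preliminaries assumes all exponents exceed $1$. I will handle it through normability of $L^{r,\infty}$ for $r>1$ as described. The borderline case $t=N/2$ also needs the careful exponent choice.
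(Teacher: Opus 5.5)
Your proof is correct. For $0<t<N/2$ and $t>N/2$ it follows the paper's argument: integrability of $H_t$ plus a convolution estimate with an $L^1$ factor. In two places you fill in what the paper only sketches. First, the paper cites a weak Young inequality from Grafakos for $L^1*L^{r,\infty}\subset L^{r,\infty}$, and your normability/Minkowski argument proves it directly. Second, for $t>N/2$ the paper only records $H_t\in L^\infty$. That yields $u(t)\in L^\infty$ but not $u(t)\in\cont_0(\RR^N)$, whereas your Riemann--Lebesgue plus density argument gives the full claim.

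The genuine difference is at $t=N/2$. The paper uses that $H_{N/2}$ lies in every $L^q$, $q<\infty$. It is a multiple of $K_0(|x|)$, so it has a logarithmic singularity at the origin and decays exponentially. The ordinary Young inequality $\|H_{N/2}*u_0\|_q\le\|H_{N/2}\|_q\|u_0\|_1$ then finishes in one line. You instead factor $u(N/2)=H_{N/2-s}*u(s)$, interpolate $u(s)\in L^1\cap L^{\frac{N}{N-2s},\infty}\subset L^p$ for $1<p<\frac{N}{N-2s}$, and invoke the Bessel-potential embedding of \Cref{re:solutionforlog}. This works:
\begin{itemize}
\item The requirement $N/2-s<N/(2p)$ is equivalent to $p<\frac{N}{N-2s}$, so it holds for every admissible $p$, not just $p$ near $1$.
\item The exponent $\frac{Np}{N-p(N-2s)}$ blows up as $p\uparrow\frac{N}{N-2s}$, so a single fixed $s\in(0,N/2)$ already suffices; there is no need to take $s$ close to $N/2$.
\end{itemize}
Your route avoids any analysis of $H_{N/2}$ near the origin. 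The cost is reliance on the Sobolev embeddings for $L^p_{2\tau}(\RR^N)$ and some exponent bookkeeping. The paper's route is shorter once the logarithmic singularity of $H_{N/2}$ is known.

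Your caveat about the $L^1$ identity is also justified. Since $H_t>0$ everywhere, for $u_0$ with nontrivial positive and negative parts one has $|H_t*u_0|<H_t*|u_0|$ pointwise, hence $\|u(t)\|_1<\|u_0\|_1$ strictly. What holds in general is $\int u(t)=\int u_0$ and $\|u(t)\|_1\le\|u_0\|_1$, with equality of norms for one-signed data. That is what the paper's one-line proof, which tacitly takes $u_0\ge0$, actually establishes.
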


\begin{proof}
    For the claim that~$\|u(t)\|_1 = \|u_0\|_1$, we simply use that~$u(t) = H_t * u_0$ and that~$\|H_t\|_1 = 1$. To show that~$u(t) \in L^{\frac{N}{N-2t}, \infty}(\RR^N)$ for~$0<t<\frac{N}{2}$, we use weak-weak Young's inequality, see~\cite{GrafakosClassical}, and the fact that~$H_t \in L^{\frac{N}{N-2t}, \infty}(\RR^N)$. For~$t = \frac{N}{2}$, we employ Young's inequality and that~$H_t \in L^q(\RR^N)$ for all~$1 \leq q < \infty$. Lastly, for~$t>\frac N2$ we use that~$H_t \in L^\infty(\RR^N)$.
\end{proof}

Even though~\eqref{eq:twotermasymptotics} was useful for this qualitative result, there are hidden factors depending on~$t$ in its last term. Thus, the above formula is not enough to get precise quantitative estimates. Instead, we use a simpler but, for our purposes, much more powerful approach. In~\eqref{eq:fundamentalsolution} we perform the change of variables~$\tau = \frac{|x|^2}{4s}$ , which yields
\begin{equation} \label{eq:fundamentalchangeofvar}
    H_t(x) = \frac{1}{|x|^{N-2t}} \frac{1}{4^{t}\pi^{N/2}\Gamma(t)} \int_0^\infty \tau^{\frac{N}{2}-t -1}e^{-\tau}e^{-\frac{|x|^2}{4\tau}}\,d\tau.
\end{equation}
Thanks to this formulation, we prove the result below.

\begin{Proposition} \label{re:weakfundamentalnorm}
    Let~$H_t$ be as in~\eqref{eq:fundamentalsolution}. Then, if~$t<N/2$,
    \begin{equation} \label{eq:fundamentalweak}
        \|H_t\|_{\frac N{N-2t},\infty}=\left(\frac{\omega_{N-1}}N\right)^{\frac{N-2t}N}\frac{\Gamma\left(\frac N2-t\right)}{4^{t}\pi^{N/2}\Gamma(t)}.
    \end{equation}
\end{Proposition}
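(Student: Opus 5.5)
Starting from the representation \eqref{eq:fundamentalchangeofvar}, we write $H_t(x)=K_t\,|x|^{-(N-2t)}\,g(|x|)$, where
\[
K_t=\frac{1}{4^{t}\pi^{N/2}\Gamma(t)},\qquad g(r)=\int_0^\infty \tau^{\frac N2-t-1}e^{-\tau}e^{-\frac{r^2}{4\tau}}\,d\tau .
\]
Since $t<N/2$, the integral defining $g(0)$ converges and equals $\Gamma(\frac N2-t)$. Moreover, $g$ is nonincreasing in $r$, with $0<g(r)\le g(0)$, and $g(r)\to g(0)$ as $r\to0$ by monotone convergence. Hence $H_t$ is radial and strictly decreasing in $|x|$, and $H_t(x)\le K_t\Gamma(\frac N2-t)|x|^{-(N-2t)}$ pointwise. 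The value claimed in \eqref{eq:fundamentalweak} is exactly $K_t\Gamma(\frac N2-t)\,\||x|^{-(N-2t)}\|_{\frac N{N-2t},\infty}$, using the already recorded value $\||x|^{-N+2t}\|_{\frac N{N-2t},\infty}=(\omega_{N-1}/N)^{\frac{N-2t}N}$.

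For the upper bound, we use the pointwise domination: the distribution function of $H_t$ is at most that of $K_t\Gamma(\frac N2-t)|x|^{-(N-2t)}$. This gives $\|H_t\|_{\frac N{N-2t},\infty}\le K_t\Gamma(\frac N2-t)(\omega_{N-1}/N)^{\frac{N-2t}N}$ immediately.

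For the lower bound, we compute the supremum defining the quasinorm. Set $q=\frac N{N-2t}$. Because $H_t$ is radially decreasing, for each $\lambda$ in its range the superlevel set $\{H_t>\lambda\}$ is a ball of radius $r(\lambda)$ with $H_t(r(\lambda))=\lambda$. Then
\[
\lambda\,\mu\{H_t>\lambda\}^{1/q}=H_t(r)\Big(\frac{\omega_{N-1}}N r^N\Big)^{\frac{N-2t}N}=\Big(\frac{\omega_{N-1}}N\Big)^{\frac{N-2t}N}K_t\,g(r).
\]
Letting $\lambda\to\infty$, equivalently $r\to0$, we have $g(r)\to\Gamma(\frac N2-t)$, so the supremum is at least the claimed constant. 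Together with the upper bound (the supremum is attained in the limit $r\to0$), this yields equality.

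The only delicate points are monotonicity and continuity of $g$, which give that the level sets are balls and that $r(\lambda)\to0$ as $\lambda\to\infty$. Both follow directly from the integrand being decreasing in $r$ and from dominated convergence with integrable majorant $\tau^{\frac N2-t-1}e^{-\tau}$. I expect no real obstacle beyond checking these.
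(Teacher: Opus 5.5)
Your proof is correct and follows the paper's argument. The upper bound comes from dominating $H_t$ pointwise by the Riesz kernel $R_t$, using $g\le g(0)=\Gamma(\frac N2-t)$; the lower bound comes from the superlevel sets being balls shrinking to the origin, where $|x|^{N-2t}H_t(x)\to K_t\Gamma(\frac N2-t)$. Your exact identity $\lambda\,\mu\{H_t>\lambda\}^{1/q}=(\omega_{N-1}/N)^{\frac{N-2t}{N}}K_t\,g(r(\lambda))$ is a slightly cleaner substitute for the paper's asymptotics of the inverse function $(H_t)^{-1}(\lambda)$, and on its own it already yields both inequalities.
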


\begin{proof}
    Recall that
    \begin{equation} \label{eq:weaknorm}
        \|f\|_{p, \infty} = \sup_{\lambda>0} \lambda (\mu\{f(x)> \lambda\})^{\frac 1p}.
    \end{equation}
    The function
    \begin{equation}
        g(r) = \int_0^\infty \tau^{\frac{N}{2}-t -1}e^{-\tau}e^{-\frac{r^2}{4\tau}}\,d\tau,\quad r \geq 0,
    \end{equation}
    is bounded, since~$t<N/2$, and decreases as~$r$ increases. Then,
    \begin{equation}\label{eq:sup.g}
        \sup_{r>0} g(r) = g(0) = \Gamma\left( \frac N2 - t \right).
    \end{equation}
    This implies, using formula~\eqref{eq:fundamentalchangeofvar}, that
    \begin{equation}
        H_t(x) \leq R_t(x) = \frac 1{|x|^{N-2t}} \frac{{\Gamma\left( \frac N2 - t \right)}}{4^{t}\pi^{N/2}\Gamma(t)},
    \end{equation}
    where~$R_t$ is the Riesz potential. For a given~$\lambda > 0$, observe that
    \begin{equation}
        \{x : H_t(x) > \lambda\} \subset \{x : R_t(x) > \lambda\},
    \end{equation}
    concluding from definition~\eqref{eq:weaknorm} that
    \begin{equation}
        \|H_t\|_{\frac N{N-2t}, \infty} \leq \|R_t\|_{\frac N{N-2t}, \infty} = \left(\frac{\omega_{N-1}}N \right)^{\frac{N-2t}N} \frac{\Gamma\left( \frac N2 - t \right)}{4^{t}\pi^{N/2}\Gamma(t)}.
    \end{equation}
    On the other hand, notice from definition~\eqref{eq:weaknorm} that
    \begin{equation}
        \lim_{\lambda \to \infty} \lambda (\mu\{f(x)> \lambda\})^{\frac 1p} \leq \|f\|_{p, \infty}.
    \end{equation}
    Since~$H_t(x)$ strictly decreases with~$|x|$, it is easy to see that
    \begin{equation}
        \mu\{H_t(x) > \lambda\} = \left(\frac{\omega_{N-1}}N \right) \big( (H_t)^{-1}(\lambda)\big)^N,
    \end{equation}
    where~$(H_t)^{-1}$ is the inverse function of~$H_t$. Notice that, by~\eqref{eq:sup.g},
    \begin{equation}
        \lim_{|x|\to 0} |x|^{N-2t} H_t(x) = \frac{{\Gamma\left( \frac N2 - t \right)}}{4^{t}\pi^{N/2}\Gamma(t)},
    \end{equation}
    whence
    \begin{equation}
        \lim_{\lambda \to \infty} \lambda^{1/(N-2t)} (H_t)^{-1}(\lambda) = \left(\frac{{\Gamma\left( \frac N2 - t \right)}}{4^{t}\pi^{N/2}\Gamma(t)} \right)^{1/(N-2t)}.
    \end{equation}
    We conclude that
    \begin{equation}
        \|H_t\|_{\frac N{N-2t}, \infty} \geq \lim_{\lambda \to \infty}  \lambda \left(\frac{\omega_{N-1}}N \right)^{\frac{N-2t}N} \big( (H_t)^{-1}(\lambda)\big)^{N-2t} = \left(\frac{\omega_{N-1}}N \right)^{\frac{N-2t}N}  \frac{\Gamma\left( \frac N2 - t \right)}{4^{t}\pi^{N/2}\Gamma(t)},
    \end{equation}
    which gives the exact formula~\eqref{re:weakfundamentalnorm}.
\end{proof}

\subsection{Quantitative hypercontractivity}

With the previous results at hand, we are now prepared to study the hypercontractivity of our operator in a quantitative manner. Observe that understanding the hypercontractivity of~$\log(I-\Delta)$ is essentially equivalent to understanding the constants in the Sobolev embeddings associated with the Bessel potential.

The next result follows from a careful application of weak Young’s inequality together with~\Cref{re:weakfundamentalnorm}. It relies on a refined analysis of the Sobolev embeddings associated with the Bessel potential. Although the hypercontractivity of the operator was already known, the argument presented here yields substantial quantitative improvements. 

\begin{Theorem} \label{re:loghypercontractivity}
    For any~$u_0 \in L^p(\RR^N)$, with~$p>1$, let~$u(t)=H_t*u_0$. Then,
    \begin{equation}\label{eq:estimate.hypercontractivity.log}
        \|u(t)\|_{\frac{Np}{N - 2pt}} \leq A_p(t) \|u_0\|_p \quad \text{for } 0 \leq t < \frac{N}{2p},
    \end{equation}
    with
    \begin{eqnarray}
        \label{eq:A_2}
        A_2(t) &=&\displaystyle \frac{1}{(4\pi)^t} \left(\frac{\Gamma\left(\frac{N}{2}-2t\right)}{\Gamma\left(\frac{N}{2}+2t\right)}\right)^{1/2} \left(\frac{\Gamma\left(N\right)}{\Gamma\left(\frac{N}{2}\right)}\right)^{2t/N},\\
        \label{eq:A_p}
        A_p(t) &=& \displaystyle \frac{\Gamma\left( \frac N2 - t \right)}{4^{t}\pi^{N/2}\Gamma(t)} g_p(t) \quad \text{for } p>1,
    \end{eqnarray}
    where
    \begin{equation}\label{eq:upper.bound.best.constant}
        g_p(t) = \frac{1}{{2t}}\left(\frac{\omega_{N-1}}{N}\right)^{\frac{N - 2 t}{N}} \frac{N \left(p - 1\right) + 2 p t}{p^{2}} \left(\left(\frac{p \left(N - 2 t\right)}{N - 2 p t}\right)^{\frac{N - 2 t}{N}} + \left(\frac{p \left(N - 2 t\right)}{N \left(p - 1\right)}\right)^{\frac{N - 2 t}{N}}\right).
    \end{equation}
\end{Theorem}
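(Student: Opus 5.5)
The proof splits into the Hilbert case $p=2$, where the sharp fractional Sobolev inequality applies directly, and the general case $p>1$, where I would compare with the Riesz potential and prove weak Young's inequality with an explicit constant.

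\textbf{The case $p=2$.} The structure of~\eqref{eq:A_2} is exactly the square root of the sharp constant in the fractional Sobolev inequality (Lieb; Cotsiolis--Tavoularis). With $s=2t<N/2$, that inequality reads
\begin{equation}
    \|f\|_{\frac{2N}{N-2s}}^2\le \frac{1}{2^{2s}\pi^{s}}\,\frac{\Gamma\left(\frac{N-2s}{2}\right)}{\Gamma\left(\frac{N+2s}{2}\right)}\left(\frac{\Gamma(N)}{\Gamma\left(\frac N2\right)}\right)^{2s/N}\|(-\Delta)^{s/2}f\|_2^2 .
\end{equation}
For $p=2$ the target exponent is $\frac{2N}{N-4t}=\frac{2N}{N-2s}$. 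I would apply this inequality to $f=u(t)$. By Plancherel,
\begin{equation}
    \|(-\Delta)^{t}u(t)\|_2=\big\||\xi|^{2t}(1+|\xi|^2)^{-t}\widehat{u_0}\big\|_2\le\|u_0\|_2,
\end{equation}
because $|\xi|^{2}\le 1+|\xi|^2$. Taking square roots gives~\eqref{eq:A_2}. The case $t=0$ is trivial, since $A_2(0)=1$.

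\textbf{The case $p>1$.} By linearity and positivity I may assume $u_0\ge0$. The proof of \Cref{re:weakfundamentalnorm} gives the pointwise bound $H_t\le R_t$, where $R_t$ is the Riesz potential kernel. Hence $u(t)\le R_t*u_0$, and it suffices to prove a weak Young (Hardy--Littlewood--Sobolev) inequality for the kernel $K=R_t$ with an explicit constant. The claimed constant should factor as $\|R_t\|_{\frac{N}{N-2t},\infty}$, computed in \Cref{re:weakfundamentalnorm}, times a purely numerical factor.

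\textbf{Explicit constant in weak Young.} I would follow the classical splitting argument for this inequality. Fix a radius $\rho$ and write $K=K\mathds{1}_{\{|x|<\rho\}}+K\mathds{1}_{\{|x|\ge\rho\}}$. Because $K$ is an explicit power, both pieces can be computed exactly:
\begin{itemize}
    \item $\|K\mathds{1}_{\{|x|<\rho\}}\|_1$ is proportional to $\rho^{2t}/(2t)$;
    \item $\|K\mathds{1}_{\{|x|\ge\rho\}}\|_{p'}$ is proportional to $\rho^{2t-N/p}$, with a prefactor involving $\big(\frac{N(p-1)}{p}\big)$ and $N-2pt$ raised to suitable powers.
\end{itemize}
Applying Young's inequality to the first piece and Hölder's inequality to the second bounds the distribution function of $K*u_0$ at each level $\lambda$. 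Choosing $\rho=\rho(\lambda)$ to balance the two contributions gives a weak-type estimate. I would then integrate the distribution function through the layer-cake formula $\|F\|_q^q=q\int_0^\infty\lambda^{q-1}\mu\{F>\lambda\}\,d\lambda$. Alternatively, one can use a pointwise Hedberg-type bound, $K*u_0\le \|K\mathds{1}_{\{|x|<\rho\}}\|_1 Mu_0+\|K\mathds{1}_{\{|x|\ge\rho\}}\|_{p'}\|u_0\|_p$, optimized in $\rho$. In either route the two bracketed terms $\big(\frac{p(N-2t)}{N-2pt}\big)^{\frac{N-2t}{N}}$ and $\big(\frac{p(N-2t)}{N(p-1)}\big)^{\frac{N-2t}{N}}$ in~\eqref{eq:upper.bound.best.constant} should arise from these two pieces. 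The factor $\frac{N(p-1)+2pt}{p^2}$ should come from the optimization in $\rho$ and the layer-cake integration, and the prefactor $\frac{1}{2t}$ from the $L^1$ norm of the truncated kernel.

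\textbf{Main obstacle.} The difficulty is keeping every constant explicit through the interpolation step, and in particular avoiding the non-explicit maximal-function or Marcinkiewicz constant. I would try to arrange the splitting so that only Young's and Hölder's inequalities with norm-one constants are used, with $\rho$ chosen level-by-level and the resulting integral in $\lambda$ evaluated in closed form. As a consistency check, I would verify that $A_p(t)$ remains bounded as $t\to0^+$: the factor $\frac{1}{\Gamma(t)}\sim t$ cancels the $\frac{1}{2t}$ in $g_p$, which is what is needed later to apply \Cref{re:hypertolog}.
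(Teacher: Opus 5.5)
\textbf{The case $p=2$.} Your argument is correct, and it differs from the paper's. $A_2(t)^2$ is exactly the sharp fractional Sobolev constant with $s=2t$. The multiplier bound $|\xi|^{2t}(1+|\xi|^2)^{-t}\le 1$ transfers it from $(-\Delta)^{-t}$ to $(I-\Delta)^{-t}$, for signed data and with no kernel comparison. The paper instead combines two ingredients: weak Young's inequality with its sharp constant, which it identifies via Lieb--Loss with Lieb's sharp diagonal Hardy--Littlewood--Sobolev constant, and $\|H_t\|_{\frac N{N-2t},\infty}$ from \Cref{re:weakfundamentalnorm}. Both arguments rest on the same sharp constant; yours is the more direct route.

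\textbf{The general case $p>1$.} Here there is a genuine gap. You never obtain the constant $g_p(t)$ that the statement asserts, and the mechanism you describe cannot produce it. The function $g_p(t)$ is exactly the explicit Lieb--Loss upper bound for the Hardy--Littlewood--Sobolev constant:
\[
C(N,\lambda,p)\le \frac{N}{(N-\lambda)\,p\,r}\Big(\frac{\omega_{N-1}}{N}\Big)^{\lambda/N}\Big[\Big(\frac{\lambda/N}{1-1/p}\Big)^{\lambda/N}+\Big(\frac{\lambda/N}{1-1/r}\Big)^{\lambda/N}\Big],\qquad \lambda=N-2t,\quad r=q',
\]
with $q=\frac{Np}{N-2pt}$. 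The paper simply cites this bound.
\begin{itemize}
\item In your first route, $u_0$ is fixed. Young's inequality on the near part of $R_t$ and H\"older's on the far part, balanced at $\rho=\rho(s)$, give only $\mu\{R_t*u_0>s\}\le C s^{-q}\|u_0\|_p^q$. This is weak type, and the layer-cake integral $q\int_0^\infty s^{q-1}s^{-q}\,ds$ diverges.
\item Upgrading to strong type needs either Marcinkiewicz interpolation (splitting $u_0$ at every level as well) or, as in your second route, the $L^p$ bound of the maximal function. Either way an extra constant enters that does not tend to $1$ as $t\to0^+$.
\end{itemize}
This is not cosmetic. The estimate is used through \Cref{re:hypertolog}, which needs $A_p(0^+)=1$. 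For the Lieb--Loss constant, $t\,g_p(t)\to\omega_{N-1}/2$ while $\frac{\Gamma(N/2-t)}{4^t\pi^{N/2}\Gamma(t)}\sim\frac{2t}{\omega_{N-1}}$, so the limit is exactly $1$. Mere boundedness, which is all your consistency check verifies, is not enough.

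\textbf{Origin of the bracketed terms, and how to close the gap.} Your guess that the two bracketed terms come from the near and far pieces of the kernel is also off. They are interchanged under $p\leftrightarrow r$, so they reflect the duality structure.
\begin{itemize}
\item Lieb and Loss expand the bilinear form $\iint f(x)|x-y|^{-\lambda}h(y)\,dx\,dy$ by the layer-cake representation of all three functions.
\item They use the elementary bound $\iint\chi_A(x)\chi_B(x-y)\chi_C(y)\,dx\,dy\le\min\{|A||B|,|A||C|,|B||C|\}$.
\item The two terms then arise from H\"older's inequality in the level variables, on two complementary regions.
\end{itemize}
To close the gap, keep your reduction $|H_t*u_0|\le R_t*|u_0|$. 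Then test $R_t*|u_0|$ against $h\in L^{q'}$ and invoke (or reproduce) the bound above. This yields exactly $A_p(t)=\frac{\Gamma(N/2-t)}{4^t\pi^{N/2}\Gamma(t)}\,g_p(t)$.
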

\begin{proof}
    Using weak Young's inequality, see~\cite{LiebLossbook}, we get
    \begin{equation} \label{eq:weakYoung}
        \|H_t * u_0\|_{\frac{Np}{N-2pt}} \leq C_{N,p}(t) \|H_t\|_{\frac N{N-2t}, \infty} \|u_0\|_p,
    \end{equation}
    where~$C_{N,p}(t)$ is the best possible constant for the inequality. We need the explicit expression of~$C_{N,p}(t)$, or at least a sufficiently good upper bound. As they explain in~\cite{LiebLossbook}, the best constant in weak Young's inequality is exactly the best constant~$c_{N, p, \lambda}$ for Hardy-Littlewood-Sobolev's inequality,
    \begin{equation}
        \left\| \frac{1}{|x|^\lambda} * f \right\|_q \leq c_{N, p, \lambda} \| f \|_p \quad \text{for } \frac 1p + \frac \lambda q = 1 + \frac 1q,
    \end{equation}
    multiplied by~$\left(\frac N{\omega_{N-1}} \right)^{1/q}$. Lieb proved in~\cite{LiebSharpconstants} that this optimal constant is indeed attained, but its explicit expression has been obtained only in a limited number of cases.

    \noindent\textsc{Case~$p=2$.} In this case the best constant is known, see~\cite{LiebSharpconstants,LiebLossbook}, hence the constant in~\eqref{eq:weakYoung} is
    \begin{equation}
        C_{N,p}(t) = \pi^{\frac{N}{2}-t} \frac{\Gamma(t)}{\Gamma\left(\frac{N}{2}-t\right)} \left(\frac{\Gamma\left(\frac{N}{2}-2t\right)}{\Gamma\left(\frac{N}{2}+2t\right)}\right)^{1/2} \left(\frac{\Gamma\left(N\right)}{\Gamma\left(\frac{N}{2}\right)}\right)^{2t/N}.
    \end{equation}
    Multiplying this by~\eqref{eq:fundamentalweak}, we obtain~\eqref{eq:A_2}.

    \noindent\textsc{General case.}
    In this case, the best constant for Hardy-Littlewood-Sobolev's inequality is not known. However, we have the upper bound~$C_{N,p}(t)\le g_p(t)$ with~$g_p$ as in~\eqref{eq:upper.bound.best.constant}; see~\cite{LiebLossbook}.
    We now multiply this estimate by~$\left(\frac N{\omega_{N-1}} \right)^{\frac{N-2t}{N}}$, and we get~\eqref{eq:A_p}.
\end{proof}

\subsection{Logarithmic Sobolev inequalities}

We have now all the ingredients needed to establish the logarithmic Sobolev inequalities for~$\log(I-\Delta)$. Let us denote by~$H_\mathcal{\log}(\RR^N)$ the energy space associated to~$\log(I-\Delta)$.

\begin{Corollary} \label{re:logdellog}
    Let~$f \in L^1(\RR^N)\cap L^p(\RR^N) \cap H_\mathcal{\log}(\RR^N)$. Then,
    \begin{equation} \label{eq:logSobolevsforlog}
        \begin{aligned}
            \ent(|f|^p)&\leq A'_p(0)\frac{N}{2}\|f\|_p^p+\frac{N}{2}\mathcal{E}_{\log}(|f|^{p-2}f,f),\quad\text{with}\\
            A'_2(0) &= -2 \psi\left( \frac{N}{2} \right) - \log{4\pi} + \frac{2}{N} \log{\left(\frac{\Gamma\left(N\right)}{\Gamma\left(\frac{N}{2}\right)}  \right)}, \\
            A'_p(0) &= \frac2N \big( p - \log{p} \big) + O(1) \quad \text{for } p \sim \infty,
        \end{aligned}
    \end{equation}
    where~$\psi = \Gamma'/\Gamma$ is the \emph{digamma function}.
\end{Corollary}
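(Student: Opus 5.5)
The plan is to apply \Cref{re:hypertolog} to the quantitative hypercontractivity estimate of \Cref{re:loghypercontractivity}. There the exponent is $q(t)=\frac{Np}{N-2pt}$. It is $\cont^1$ and increasing on $[0,\frac N{2p})$, with $q(0)=p$ and
\[
    q'(0)=\frac{2p^2}{N}>0 .
\]
\Cref{re:hypertolog} then gives $D(p)=p^2/q'(0)=N/2$ and $C(p)=p^2A_p'(0)/q'(0)=\frac N2 A_p'(0)$, which is exactly the form in~\eqref{eq:logSobolevsforlog}. So it remains to check the two hypotheses $A_p(0^+)=1$ and $A_p'(0)<\infty$, and to compute or estimate $A_p'(0)$. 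In every case I will work with $\log A_p(t)$ and differentiate at $t=0$.

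\textsc{Case $p=2$.} From~\eqref{eq:A_2} we have
\[
    \log A_2(t)=-t\log(4\pi)+\tfrac12\big[\log\Gamma(\tfrac N2-2t)-\log\Gamma(\tfrac N2+2t)\big]+\tfrac{2t}{N}\log\frac{\Gamma(N)}{\Gamma(N/2)}.
\]
This vanishes at $t=0$, so $A_2(0)=1$. Differentiating at $t=0$, the bracket contributes $\tfrac12(-2\psi-2\psi)=-2\psi(N/2)$. This gives the stated value of $A_2'(0)$ directly, with no obstacle.

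\textsc{General $p$: $A_p(0)=1$.} In~\eqref{eq:A_p}, the factor $1/\Gamma(t)=t+\gamma t^2+O(t^3)$ cancels the $1/(2t)$ in $g_p$. At $t=0$ the remaining pieces of $g_p$ evaluate to
\[
    \frac{\omega_{N-1}}{N}\cdot\frac{N(p-1)}{p^2}\Big(p+\frac p{p-1}\Big)=\omega_{N-1}.
\]
Hence $A_p(0)=\frac{\Gamma(N/2)}{\pi^{N/2}}\cdot\frac{\omega_{N-1}}2=1$, using $\omega_{N-1}=2\pi^{N/2}/\Gamma(N/2)$. So the limit hypothesis of \Cref{re:hypertolog} holds.

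\textsc{General $p$: finiteness and asymptotics of $A_p'(0)$.} I will write $A_p(t)$ as $\Gamma(\frac N2-t)\,4^{-t}\,\frac{t}{\Gamma(t)}\,\big(2t\,g_p(t)\big)\,\frac{1}{2\pi^{N/2}}$ and sum the logarithmic derivatives at $0$:
\begin{itemize}
\item $-\psi(N/2)-\log 4$, from $\Gamma(\frac N2-t)\,4^{-t}$;
\item $+\gamma$, from $t/\Gamma(t)$;
\item $-\frac 2N\log\frac{\omega_{N-1}}N$, from the power $(\omega_{N-1}/N)^{(N-2t)/N}$;
\item $+\frac{2}{N(p-1)}$, from the linear factor $N(p-1)+2pt$;
\item the logarithmic derivative of the sum $S(t)=X(t)^{(N-2t)/N}+Y(t)^{(N-2t)/N}$, where $X=\frac{p(N-2t)}{N-2pt}$ and $Y=\frac{p(N-2t)}{N(p-1)}$.
\end{itemize}
Everything is smooth at $t=0$, so $A_p'(0)$ is finite for every $p>1$. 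This also lets me drop the remark in \Cref{re:hypertolog} about replacing $A_p$ by a larger bound.

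The main obstacle is the large-$p$ expansion of $S'(0)/S(0)$, where $S(0)=p+\frac p{p-1}$. The first term gives
\[
    \frac{d}{dt}X^{(N-2t)/N}\Big|_{0}=p\Big(-\tfrac2N\log p+\tfrac{2(p-1)}{N}\Big),
\]
with weight $p/S(0)=1+O(1/p)$. So it contributes $\frac2N(p-\log p)+O(1)$. The second term is $Y^{(N-2t)/N}$ with $Y(0)=\frac p{p-1}=O(1)$ and bounded derivative, so it contributes $O(1/p)$. Collecting terms gives $A_p'(0)=\frac2N(p-\log p)+O(1)$ as $p\to\infty$. The care needed here is in keeping track of the $\log p$ coming from differentiating the exponent $(N-2t)/N$, and in checking that the $O(1/p)$ corrections to the weight, multiplied by the $O(p)$ derivative, only produce $O(1)$.

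Finally, the class of admissible $f$ is the one given by \Cref{re:hypertolog}, namely $L^1\cap L^p\cap H_{\log}$. The extension from $f\ge0$ to $|f|^{p-2}f$ works exactly as in that theorem.
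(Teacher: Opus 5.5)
Your proposal is correct and follows the paper's proof. You feed the estimate of \Cref{re:loghypercontractivity}, with $q(t)=\frac{Np}{N-2pt}$ and $q'(0)=\frac{2p^2}{N}$, into \Cref{re:hypertolog}; you compute $A_2'(0)$ from~\eqref{eq:A_2}; and you check $A_p(0)=1$ and the large-$p$ behaviour of $A_p'(0)$ from~\eqref{eq:A_p}, where your logarithmic-derivative bookkeeping is a tidier version of the paper's Laurent expansions of $F$ and $g_p$. There are two harmless slips: the smooth factor in your product should be $\frac{1}{t\Gamma(t)}=\frac{1}{\Gamma(1+t)}$ (whose logarithmic derivative at $0$ is indeed $\gamma$) rather than $\frac{t}{\Gamma(t)}$, and the factor $N(p-1)+2pt$ contributes $\frac{2p}{N(p-1)}$ rather than $\frac{2}{N(p-1)}$; both are $O(1)$, so the asymptotics $A_p'(0)=\frac2N(p-\log p)+O(1)$ are unaffected.
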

\begin{proof}
    \textsc{Case~$p=2$.} Observe that in~\eqref{eq:A_2} it holds~$A_2(0) = 1$, as needed. In order to obtain the corresponding logarithmic Sobolev inequality, we need to calculate its derivative,
\[
A_2'(t)
= A_2(t)\Big[
-\psi\!\left(\frac N2-2t\right)
-\psi\!\left(\frac N2+2t\right)
-\ln(4\pi)+\frac{2}{N}\ln\!\left(\frac{\Gamma(N)}{\Gamma(\tfrac N2)}\right)
\Big],
\]
that at zero gives 
    \begin{equation}
        A_2'(0) = -2 \psi\left( \frac{N}{2} \right) - \log{4\pi} + \frac{2}{N} \log{\left(\frac{\Gamma\left(N\right)}{\Gamma\left(\frac{N}{2}\right)}  \right)}.
    \end{equation}
    Finally, notice that for~$q(t) = \frac{2N}{N-4t}$, we have~$q'(0) = \frac{8}{N}$. Hence, using~\Cref{re:hypertolog}, the logarithmic Sobolev inequality is
    \begin{equation}
        \ent(f^2) \leq A_2'(0) \frac{N}{2} \|f\|_2^2 + \frac{N}{2} \overline{\mathcal{E}}_{\log}(f).
    \end{equation}

    \noindent\textsc{General case~$p\ge 1$.} We have to be more careful in this case, since the expression for~$A_p(t)$ is not as clean as the one for~$p=2$. From now on, we write~$\omega$ instead of~$\omega_{N-1}$. Let us call
    \begin{equation}
        F(t) = \frac{\Gamma\left( \frac N2 - t \right)}{4^{t}\pi^{N/2}\Gamma(t)},
    \end{equation}
    so that~$A_p(t) = F(t)g_p(t)$. First, notice that~$A_p(0) = 1$, since
    \begin{equation}
        \lim_{t \to 0}\frac{F(t)}t = \frac{2}{\omega}, \qquad \lim_{t \to 0} t g_p(t) = \frac{\omega}{2}.
    \end{equation}
    We now need to check that~$\lim\limits_{t \to 0} A'_p(t)$ exists. Through some work we notice that
    \begin{gather}
        \lim_{t \to 0}\frac{g'_p(t)}{\frac \alpha t + \frac{\beta}{t^{2}}} = 1, \quad\textup{with}\\
        \alpha = \displaystyle\frac{\omega}{N}\left(
        p - 1 + \frac{1}{p-1}+ \frac{1}{p}\log(p-1)
        - \log\!\left(\frac{\omega p}{N}\right)
        \right), \qquad \beta = - \dfrac{\omega}{2}.
    \end{gather}
    Since~$A_p(t) = F'(t)g_p(t) + F(t) g'_p(t)$, we only need to obtain~$F'(t) = a + b t + O(t^2)$. Using Laurent's expansion for~$\Gamma(t)$ one can check that
    \begin{equation}
        \frac{\Gamma'(t)}{\Gamma^2(t)} = -1 - 2 \gamma t + O(t^2),
    \end{equation}
where~$\gamma$ is Euler-Mascheroni's constant.  From this,
    \begin{equation}
        a = \frac{2}{\omega}, \qquad b = \frac{2}{\omega} \left( -\psi\left( \frac N2 \right) - 2 \log{4} + 2 \gamma \right).
    \end{equation}
    For~$\lim\limits_{t \to 0} A'_p(t)$ to exist, we need
    \begin{equation}
        a \frac{\omega}{2} + \beta \frac{2}{\omega} = 0,
    \end{equation}
    which can be easily checked. Finally, observe that
    \begin{equation}
        \begin{aligned}
            A'_p(0) &= \lim_{t \to 0} A'_p(t) = b \frac{\omega}{2} + \alpha \frac{2}{\omega} \\
            &= \frac{- N p^{2} \psi{\left(\frac{N}{2} \right)} - 2 N p^{2} \log{\left(2 \right)} + 2 \gamma N p^{2} + N p \psi{\left(\frac{N}{2} \right)} - 2 \gamma N p + 2 N p \log{\left(2 \right)}}{N p \left(p - 1\right)} \\
            &\quad+ \frac{2 p^{3} - 2 p^{2} \log{\left(p \right)} - 2 p^{2} \log{\left(\frac{\omega}{N} \right)} - 4 p^{2} + 4 p \log{\left(p \right)} + 2 p \log{\left(\frac{\omega}{N} \right)}}{N p \left(p - 1\right)} \\
            &\quad+ \frac{-2 p \log{\left(\frac{p}{p - 1} \right)} + 4 p - 2 \log{\left(p \right)} + 2 \log{\left(\frac{p}{p - 1} \right)}}{N p \left(p - 1\right)}.
        \end{aligned}
    \end{equation}
    Asymptotically, this behaves as
    \begin{equation}
        A'_p(0) = \frac2N \big( p - \log{p} \big) + O(1) \quad \text{for } p \sim \infty.
    \end{equation}
    For~$q(t) = \frac{Np}{N-2pt}$, we have~$q'(0) = \frac{2p^2}{N}$. Hence, using~\Cref{re:hypertolog}, we get the result.
\end{proof}

\smallskip
\begin{Remark}
    Some of the above calculations could be performed for the operator~$\log\big(I + (-\Delta)^{\sigma/2}\big)$, leading to similar properties.
\end{Remark}

\subsection{Comparison with the operator~\texorpdfstring{$\log(-\Delta)$}{log(− ∆)}}\label{subse:badlog}

Problem~\eqref{problem-L} for~$\mathcal{L} = \log(-\Delta)$ is interesting, since it behaves somewhat similarly to~$\log(I-\Delta)$, although, as we will see, with some important drawbacks. Recall that~$\log(-\Delta)$ is defined via the Fourier transform as
\begin{equation}
    [\log(-\Delta)f]\,\widehat{\;} = \log(|\xi|^2) \widehat{f}.
\end{equation}
This operator is not monotone; that is,~$\langle\log(-\Delta)f,f\rangle\geq0$ is not true in general. The logarithmic Laplacian~$\log(-\Delta)$ can be expressed as,
\begin{equation}
    \begin{gathered}
        \log(-\Delta)f(x) = c_N\int_{B_1(x)}\frac{f(x)-f(y)}{|x-y|^N}\, dy - c_N\int_{\RR^N \setminus B_1(x)}\frac{f(y)}{|x-y|^N}\, dy +  \rho_N f(x),\quad\textup{with}\\
        c_N = \pi^{-N/2} \Gamma\left(\frac N2\right), \quad  \rho_N = 2\log(2) + \psi\left(\frac N2\right) - \gamma,
    \end{gathered}
\end{equation}
see~\cite{ChenWethLog}. Solutions can be written in terms of the Riesz potential,
\begin{equation}\label{eq:rieszfundamental}
    u(t) = (-\Delta)^{-2t} u_0 = R_t * u_0,\quad\textup{where }
    R_t(x) = \frac{1}{|x|^{N-2t}} \frac{1}{4^t \pi^{N/2}} \frac{\Gamma\left( \frac N2 - t \right)}{\Gamma(t)}.
\end{equation}
Observe that~$R_t$ is not in~$L^1$, and therefore, in general,~$u(t)$ no longer belongs to the same space~$L^p(\mathbb{R}^N)$ as~$u_0$ did. However, it is clear that~$R_t$ belongs to~$L^{\frac{N}{N-2t}, \infty}(\RR^N)$. In fact, the norm can be calculated more easily than we did in~\Cref{re:weakfundamentalnorm} for~$H_t$.
\begin{Proposition}
    Let~$R_t$ be as in~\eqref{eq:rieszfundamental}. Then,
    \begin{equation}
        \|R_t\|_{\frac N{N-2t}, \infty} = \left(\frac{\omega_{N-1}}N \right)^{\frac{N-2t}N} \frac{\Gamma\left( \frac N2 - t \right)}{4^{t}\pi^{N/2}\Gamma(t)}.
    \end{equation}
\end{Proposition}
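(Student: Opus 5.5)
Since $R_t(x)=K_t|x|^{-(N-2t)}$ with $K_t=\frac{\Gamma(\frac N2-t)}{4^t\pi^{N/2}\Gamma(t)}>0$ (positive because $0<t<N/2$), the weak norm can be computed directly from the definition~\eqref{eq:weaknorm}, exactly as the upper bound was computed in the proof of~\Cref{re:weakfundamentalnorm}. In contrast with that result, no comparison argument or limit as $\lambda\to\infty$ is needed: the level sets of $R_t$ are balls whose measure is explicit for every $\lambda$.

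First I fix $\lambda>0$ and compute the level set. Since $R_t$ is radial and strictly decreasing in $|x|$,
\[
\{x: R_t(x)>\lambda\}=\{|x|<(K_t/\lambda)^{1/(N-2t)}\},
\]
and its Lebesgue measure is
\[
\frac{\omega_{N-1}}{N}\Big(\frac{K_t}{\lambda}\Big)^{N/(N-2t)}.
\]

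Next I raise this to the power $\frac{N-2t}{N}$ and multiply by $\lambda$. This gives
\[
\lambda\Big(\frac{\omega_{N-1}}{N}\Big)^{\frac{N-2t}{N}}\frac{K_t}{\lambda}=\Big(\frac{\omega_{N-1}}{N}\Big)^{\frac{N-2t}{N}}K_t .
\]
This quantity does not depend on $\lambda$. Therefore the supremum over $\lambda>0$ equals this constant, which is the claimed identity. Equivalently, it agrees with the value $\||x|^{-N+2t}\|_{\frac{N}{N-2t},\infty}=\left(\frac{\omega_{N-1}}N \right)^{\frac{N-2t}N}$ recorded after~\eqref{eq:twotermasymptotics}, multiplied by $K_t$.

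No step is a real obstacle. The only point to state explicitly is that scale invariance is what makes $\lambda\,\mu\{R_t>\lambda\}^{1/p}$ constant in $\lambda$, so that the supremum is attained at every level.
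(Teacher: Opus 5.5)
Your proof is correct and is essentially the computation the paper intends. The paper gives no separate proof: it only notes that this norm is easier to compute than the one in \Cref{re:weakfundamentalnorm}, and it relies on the same explicit level-set computation for $|x|^{-N+2t}$ (recorded after \eqref{eq:twotermasymptotics}) that you carry out, where scale invariance makes $\lambda\,\mu\{R_t>\lambda\}^{(N-2t)/N}$ independent of $\lambda$, with your added remark that $0<t<N/2$ is needed for $K_t>0$.
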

Thanks to this result, it is clear that solutions~$u(t) = R_t * u_0$ satisfy the hypercontractivity property
\begin{equation}
    \|u(t)\|_{\frac{pN}{N-2pt}} \leq  A_p(t) \|u_0\|_p
\end{equation}
with the same function coefficient~$A_p$ obtained in~\Cref{re:loghypercontractivity}. Hence, one may try to apply~\Cref{re:hypertolog} in order to obtain the family of logarithmic Sobolev inequalities for~$\log(-\Delta)$. However, the expression~\eqref{eq:logdiff} may not be well-defined for~$u(t) = R_t * u_0$, since it does not belong to the appropriate spaces. Consequently, obtaining logarithmic Sobolev inequalities ---at least by our method--- is not possible. In fact,~$u(t)$ cannot be considered a solution in the sense of~\Cref{def:L2solutions}, since~$u(t)$ may not belong to~$L^2$ for~$t>0$. It is clear that Hille-Yosida's Theorem,~\Cref{re:HilleYosida}, cannot be applied here, since~$\log(-\Delta)$ is not a monotone operator.

Even more important, solutions~$u(t) = R_t * u_0$ for~$u_0 \in L^p(\RR^N)$ cease to be well defined for~$t>\frac{N}{2p}$, even as tempered distributions, since the Fourier multiplier is not locally integrable anymore. So the main differences between problem~\eqref{problem-L} for~$\log(-\Delta)$ and~$\log(I-\Delta)$ are:
\begin{itemize}
    \item For~$p>1$ and~$0 < t < \frac{N}{2p}$, solutions of both problems satisfy
        \begin{equation}
            \|u(t)\|_{\frac{pN}{N-2pt}} \leq A_p(t) \|u_0\|_p.
        \end{equation}
    Solutions associated to~$\log(I-\Delta)$ remain in every~$L^q$ space they get in, while solutions corresponding to~$\log(-\Delta)$ only belong to~$L^{\frac{pN}{N-2pt}}$ in general.
    \item Solutions corresponding to~$\log(I-\Delta)$ keep on existing and improving for~$t > \frac{N}{2p}$, and~$u(t) \in L^p_{2t}(\RR^N)$, while solutions associated to~$\log(-\Delta)$ are no longer well-defined for~$t > \frac{N}{2p}$.
    \item The logarithmic Laplacian~$\log(-\Delta)$ cannot be understood via a subordination formula for~$-\Delta$, unlike~$\log(I-\Delta)$, since the function~$\Phi(s) = \frac{\log s}s$ is not completely monotone.
    \item While~$\log(I-\Delta)$ has a clear probabilistic interpretation, being a Lévy operator,~$\log(-\Delta)$ does not. Its fundamental solution, the Riesz potential, is not integrable, and thus cannot represent a transition density.
\end{itemize}
For more information regarding the logarithmic Laplacian~$\log(-\Delta)$, see~\cite{ChenVeronLog,ChenVeronCauchyLog,ChenWethLog,JarohsSaldanaWethLog}.

\section{General operators} \label{se:generaloperators}

Let~$\mathcal{L}$ be a purely nonlocal operator given by a general Lévy kernel~$J(x,y)$; see its definition in~\Cref{se:preliminaries}. Bear in mind that solutions to~\eqref{problem-L} exist for these operators thanks to the theory in the~\hyperref[se:existence]{Appendix}. Assume that~$J(x,y)$ is bounded from below by~$c|x-y|^{-N}$ for~$c>0$ and~$x \sim y$. For these more general operators, we cannot use the Fourier transform technique from the previous section. The alternative approach that we describe below uses all the main results from~\Cref{se:equivalence} and~\Cref{se:logoperator} of the paper.

Let us call~$J_{\log}$ the kernel of~$\log(I-\Delta)$. Thanks to~\Cref{re:hypertolog}, we know that~$\log(I-\Delta)$ satisfies the~$p$--logarithmic Sobolev inequalities~\eqref{eq:logSobolevsforlog} for all~$p\geq 2$, as we showed in~\Cref{se:logoperator}. Just like in the previous section, with a change of variables in time we can replace the constants and assume that~$J(x,y) \geq J_{\log}(x-y)$ (although it will only be required for~$x \sim y$). This condition tells us that~$J(x,y)$ is at least as singular as~$J_{\log}(x-y)$ at the diagonal. This implies that
\begin{equation}
    \mathcal{E}_{\log}(|f|^{p-2}f,f) \leq \mathcal{E}_{\mathcal{L}}(|f|^{p-2}f,f),
\end{equation}
where $\mathcal{E}_{\log}$ and $\mathcal{E}_{\mathcal{L}}$ are the respective bilinear forms of operators $\log(I-\Delta)$ and $\mathcal{L}$. Therefore~$\mathcal{L}$ satisfies the same logarithmic Sobolev inequalities as~$\log(I-\Delta)$. Finally, we use~\Cref{re:logtohyper} and~\Cref{re:logtostronghyper} to prove that~$\mathcal{L}$ is strongly hypercontractive.

\begin{Theorem} \label{re:strongzero}
     Let~$\mathcal{L}$ be a L\'evy operator with a kernel~$J$ satisfying~$J(x,y)\geq J_{\log}(x-y)$ for~$|x-y|<1$. Then, for all~$p\geq 2$,
    \begin{equation}
        \ent(|f|^p) \leq \big(C(p) + \|J_{\log}\|_{L^1(\RR^N \setminus B_1)}\big) \|f\|_p^p + \frac{N}{2} \mathcal{E}_{\mathcal{L}}(|f|^{p-2}f,f)
    \end{equation}
    holds for all $f \in L^1(\RR^N) \cap L^p(\RR^N) \cap H_\mathcal{L}(\RR^N)$ and $C(p)$ as in~\Cref{re:logdellog}.
    
    If, furthermore,~$J(x,y)\geq J_{\log}(x-y)$ for all~$x,y \in \RR^N$, then
        \begin{equation}
        \ent(|f|^p) \leq C(p) \|f\|_p^p + \frac{N}{2} \mathcal{E}_{\mathcal{L}}(|f|^{p-2}f,f).
    \end{equation}
    Hence, in both cases~$\mathcal{L}$ is strongly~$p$--hypercontractive for all~$p > 1$ and, for~$u_0$ in~$L^p(\RR^N)$, the solution~$u(t)$ to~\eqref{problem-L} gets into every~$L^q(\RR^N)$, with~$q\geq p$, for~$t = \frac{N}{2p}$.
\end{Theorem}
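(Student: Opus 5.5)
The plan is to transfer the $p$--logarithmic Sobolev inequalities of \Cref{re:logdellog} from $\log(I-\Delta)$ to $\mathcal{L}$ by comparing the energy forms. Then \Cref{re:logtohyper} and \Cref{re:logtostronghyper} convert them back into strong hypercontractivity, and \Cref{re:dualityresult} covers $1<p<2$.

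\textbf{Step 1: comparison of energies.} Fix $p\ge2$ and write $g=|f|^{p-2}f$. The key pointwise fact is that the integrand $(g(x)-g(y))(f(x)-f(y))$ is nonnegative, because $s\mapsto|s|^{p-2}s$ is nondecreasing. If $J\ge J_{\log}$ everywhere, this gives at once $\mathcal{E}_{\log}(g,f)\le\mathcal{E}_{\mathcal{L}}(g,f)$, and substituting into \eqref{eq:logSobolevsforlog} yields the second inequality with $C(p)=\frac N2A_p'(0)$. This also shows $f\in H_{\log}$ whenever the right-hand side is finite, so \Cref{re:logdellog} applies (after the usual approximation by bounded functions).

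In the local case $J\ge J_{\log}$ only for $|x-y|<1$, split $\mathcal{E}_{\log}(g,f)$ into the regions $|x-y|<1$ and $|x-y|\ge1$. The first region is bounded by $\mathcal{E}_{\mathcal{L}}(g,f)$ as above. For the second region, expand
\[
(g(x)-g(y))(f(x)-f(y))\le |f(x)|^p+|f(y)|^p,
\]
which holds since $-g(x)f(y)-g(y)f(x)\le|f(x)|^{p-1}|f(y)|+|f(y)|^{p-1}|f(x)|$, and Young's inequality bounds the latter by $|f(x)|^p+|f(y)|^p$. By symmetry and the factor $\frac12$, the far-field part is then at most $\|J_{\log}\|_{L^1(\RR^N\setminus B_1)}\|f\|_p^p$. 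Multiplying by the coefficient $\frac N2$ produces an additive term proportional to $\|J_{\log}\|_{L^1(\RR^N\setminus B_1)}\|f\|_p^p$, matching the stated constant up to the normalization of that coefficient, which I would track carefully. This $L^1$ norm is finite by the exponential decay of $J_{\log}$ from the asymptotics Proposition.

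\textbf{Step 2: from inequalities to hypercontractivity.} In both cases we have $r$--logarithmic Sobolev inequalities for every $r\ge p$, with $D(r)=\frac N2$ and $C(r)=O(r)$. By \Cref{re:logtohyper} the ODE $q'=q^2/D(q)=\frac2Nq^2$ with $q(0)=p$ gives $q(t)=\frac{Np}{N-2pt}$, which blows up at $t_\infty=\frac N{2p}$. This is exactly \eqref{eq:explosioncondition}: $\int_p^\infty \frac{N}{2\alpha^2}\,d\alpha=\frac N{2p}$. Hence $\mathcal{L}$ is strongly $p$--hypercontractive for $p\ge2$, with finite $A_p(t)$ for $t<\frac N{2p}$. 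Since $q(t)$ sweeps all of $[p,\infty)$ before $t_\infty$ and the semigroup is $L^q$-contractive, $u(t)\in L^q$ for every $q\ge p$ at $t=\frac N{2p}$.

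\textbf{Step 3: the range $1<p<2$.} Apply \Cref{re:dualityresult}. The time to pass from $L^p$ to $L^q$ is $t^q_p=\frac N2(\frac1p-\frac1q)$, and this expression is symmetric under $(p,q)\mapsto(q',p')$. So for $1<p<2$, duality gives passage from $L^p$ up to $L^2$ at time $\frac N2(\frac1p-\frac12)$, after which the $p=2$ result continues. Adding the two times gives blow-up exactly at $\frac N{2p}$.

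The main obstacle is the justification in Step 1: one must make sure that $f\in L^1\cap L^p\cap H_{\mathcal{L}}$ makes the inequality of \Cref{re:logdellog} applicable, which requires truncation or approximation and Fatou's lemma as in \Cref{re:hypertolog}. One must also pass from bounded data to general $L^p$ data in Step 2, where the very weak solutions may require the approximation described in the Preliminaries.
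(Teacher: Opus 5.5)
Your proposal follows the paper's proof step for step. You compare $\mathcal{E}_{\log}$ with $\mathcal{E}_{\mathcal{L}}$ via the monotonicity of $s\mapsto|s|^{p-2}s$, bound the region $|x-y|\ge1$ by $\|J_{\log}\|_{L^1(\RR^N\setminus B_1)}\|f\|_p^p$, and then invoke \Cref{re:logtohyper}, \Cref{re:logtostronghyper} and \Cref{re:dualityresult}. You are right that substituting into \eqref{eq:logSobolevsforlog} gives the additive constant $\frac N2\|J_{\log}\|_{L^1(\RR^N\setminus B_1)}$ rather than the stated one; the paper's own proof gives the same, and this affects nothing downstream.

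One small fix: your far-field bound $(g(x)-g(y))(f(x)-f(y))\le|f(x)|^p+|f(y)|^p$ is false for sign-changing $f$, and your Young argument only gives twice that. First reduce to $f\ge0$ as the paper does. This is legitimate since $\mathcal{E}_{\mathcal{L}}(|f|^{p-2}f,f)\ge\mathcal{E}_{\mathcal{L}}(|f|^{p-1},|f|)$, and for $f\ge0$ the cross terms have the right sign, so the bound holds.
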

\begin{proof}
    Assume as always that the functions involved are nonnegative. Since
    \[
        (a^{p-1}-b^{p-1})(a-b) \geq 0\quad\text{for all }p>1\text{ and }a,b \geq 0,
    \]
    if~$J(x,y) \geq J_{\log}(x-y)$ for all~$x,y \in \RR^N$, then
    \begin{equation}
        \mathcal{E}_{\log}(u^{p-1},u) \leq \mathcal{E}_{\mathcal{L}}(u^{p-1},u).
    \end{equation}
    From this, it is trivial to check that the operator~$\mathcal{L}$ satisfies all the same $p$--logarithmic inequalities as~$\log(I-\Delta)$, with the same constants; that is, for~$f\geq 0$,
    \begin{equation}
        \ent(f^p) \leq C(p) \|f\|_p^p + \frac N2 \mathcal{E}_{\mathcal{L}}(f^{p-1},f), 
    \end{equation}
    with~$C(p)$ as in~\Cref{re:logdellog}.

    Now assume that~$J(x,y)\geq J_{\log}(x-y)$ for~$|x-y|<1$. In this case,
    \begin{equation}
        \begin{aligned}
            \mathcal{E}_{\log}(f^{p-1},f)&=\mathcal{A}+\mathcal{B},\quad\textup{where}\\
            \mathcal{A}&=\frac12\iint_{|x-y|<1}(u(x)^{p-1}-u(y)^{p-1})(u(x)-u(y)) J_{\log}(x-y),\\
            \mathcal{B}&=\frac12\iint_{|x-y|\geq1} (u(x)^{p-1}-u(y)^{p-1})(u(x)-u(y))J_{\log}(x-y).
        \end{aligned}
    \end{equation}
    For~$\mathcal{A}$, exactly as before,
    \begin{equation}
        \mathcal{A}\leq\frac12\iint_{|x-y|<1}(u(x)^{p-1}-u(y)^{p-1})(u(x)-u(y))J_{\mathcal{L}}(x,y)\leq\mathcal{E}(f^{p-1},f).
    \end{equation}
    For~$\mathcal{B}$, we use the inequality
    \[
        (a^{p-1}-b^{p-1})(a-b) \leq \left(a^{\frac p2} - b^{\frac p2}\right)^2,
    \]
    valid for all~$p > 1$ and~$a,b \geq 0$, to obtain that
    \begin{equation}
        \begin{aligned}
            \mathcal{B}&\leq\frac12\iint_{|x-y|\geq 1}(u(x)^{\frac p2}-u(y)^{\frac p2})^2J_{\log}(x-y)\leq\iint_{|x-y|\geq1}u(x)^{p}J_{\log}(x-y)\\
            &=\|J_{\log}\|_{L^1(\RR^N\setminus B_1)}\|u\|_p^p.
        \end{aligned}
    \end{equation}
    Therefore,
    \begin{equation}
        \mathcal{E}_{\log} (u^{p-1}, u) \leq \mathcal{E}_{\mathcal{L}} (u^{p-1}, u) + \|J_{\log}\|_{L^1(\RR^N \setminus B_1)} \|u\|_p^p,
    \end{equation}
    and we obtain the corresponding~$p$--logarithmic inequalities for this operator.
    Thus, thanks to~\Cref{re:logtohyper}, the operator~$\mathcal{L}$ is~$p$--hypercontractive for all~$p\geq 2$. Furthermore, it is obvious from~\Cref{re:logtostronghyper}  that the operator is strongly~$p$--hypercontractive for all~$p\geq 2$, and for initial data~$u_0 \in L^p(\RR^N)$, solutions~$u=u(t)$ to~\eqref{problem-L} get into every~$L^q$, with~$q\geq2$, for~$t = \frac{N}{2p}$. To extend the hypercontractivity for~$p \in (1,2)$, simply apply the duality result~\Cref{re:dualityresult}.
\end{proof}

\begin{Remark}
    The coefficient~$\|J_{\log}\|_{L^1(\RR^N \setminus B_1)}$ accompanying~$C(p)$ is of no importance, since for both~$C(p)$ and~$D(p)$ what matters is how they behave when~$p \to \infty$.
\end{Remark}
\begin{Remark}
    Recall that~$C(p) \sim p - \log(p)$ for~$p\gg1$, which yields
    \begin{equation}
        \int_p^{\infty} \frac{C(\alpha)}{\alpha^2} \, d\alpha = \infty.
    \end{equation}
    This implies that solutions do not necessarily get bounded for~$t = \frac{N}{2p}$, which is the time at which they are in every~$L^q$ for~$p \leq q < \infty$. This was expected, since solutions of problem~\eqref{problem-L} associated to~$\log(I-\Delta)$ do not get bounded for~$t = \frac{N}{2p}$ either; they get bounded for every~$t> \frac{N}{2p}$.
\end{Remark}

Let us finish this section by showing that~\Cref{re:strongzero} is optimal, in the sense that~$\mathcal{L}$ cannot be supercontractive, and thus neither ultracontractive, if we assume an upper bound of the form 
\begin{equation} \label{eq:upperlogbound}
    J(x,y) \leq c|x-y|^{-N}.
\end{equation}

\begin{Theorem} \label{re:notsuper}
    Let~$\mathcal{L}$ be a L\'evy operator with a kernel~$J$ satisfying~$J(x,y)\leq c|x-y|^{-N}$ for~$|x-y|<1$ and~$c>0$. Then~$\mathcal{L}$ cannot be supercontractive.
\end{Theorem}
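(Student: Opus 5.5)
The plan is to argue by contradiction through the equivalence in \Cref{re:supercontractivity}. Suppose $\mathcal{L}$ is supercontractive. By the first item of \Cref{re:supercontractivity}, for every $\varepsilon>0$ there is a finite $\beta(\varepsilon)$ such that
\begin{equation}
    \ent(f^2)\leq 2\beta(\varepsilon)\|f\|_2^2+2\varepsilon\,\overline{\mathcal{E}}(f)\quad\text{for all } f\in L^1(\RR^N)\cap H_{\mathcal{L}}(\RR^N).
\end{equation}
I will show that this fails for $\varepsilon$ small by testing it on $L^2$-normalized concentrating functions. For such functions the entropy grows like $N\log\lambda$. Under the upper bound $J(x,y)\le c|x-y|^{-N}$, the energy grows only like a fixed multiple of $\log\lambda$. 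A small enough $\varepsilon$ then makes the right-hand side too small.

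Fix a nonnegative $\varphi\in\cont_c^\infty(\RR^N)$ with $\|\varphi\|_2=1$, and set $f_\lambda(x)=\lambda^{N/2}\varphi(\lambda x)$ for $\lambda\ge1$. Then $\|f_\lambda\|_2=1$, $f_\lambda\in L^1(\RR^N)$, and $f_\lambda\in H_{\mathcal{L}}(\RR^N)$ by the energy estimate below. A change of variables gives
\begin{equation}
    \ent(f_\lambda^2)=\int\varphi^2\log\varphi^2+N\log\lambda .
\end{equation}
For the energy, split $\overline{\mathcal{E}}(f_\lambda)$ into the regions $|x-y|\ge1$ and $|x-y|<1$.

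On $|x-y|\ge1$, use $(a-b)^2\le 2a^2+2b^2$, the symmetry~\eqref{eq:symmetry} and the Lévy condition~\eqref{eq:conditions.Levy.kernel}. These give a bound $2K\|f_\lambda\|_2^2=2K$, where $K=\sup_x\int_{|x-y|\ge1}J(x,y)\,dy$. This bound is independent of $\lambda$, and it needs no translation invariance.

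On $|x-y|<1$, insert the upper bound on $J$ and rescale. This gives at most
\begin{equation}
    \frac c2\iint_{|x-y|<\lambda}\frac{(\varphi(x)-\varphi(y))^2}{|x-y|^{N}}\,dx\,dy .
\end{equation}
Split this integral at $|x-y|=1$. The part with $|x-y|<1$ is a finite constant, since $\varphi$ is Lipschitz and the integrand is then $O(|x-y|^{2-N})$. The part with $1\le|x-y|<\lambda$ is bounded by $2\|\varphi\|_2^2\int_{1\le|z|<\lambda}|z|^{-N}\,dz=2\omega_{N-1}\log\lambda$. Altogether,
\begin{equation}
    \overline{\mathcal{E}}(f_\lambda)\le c\,\omega_{N-1}\log\lambda+M,
\end{equation}
with $M$ independent of $\lambda$. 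Estimating this logarithmic growth carefully, with an explicit constant in front of $\log\lambda$, is the one step where the upper bound on the kernel really matters, and I expect it to be the main (though mild) obstacle.

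To conclude, insert $f_\lambda$ into the logarithmic Sobolev inequality. This gives
\begin{equation}
    N\log\lambda+\int\varphi^2\log\varphi^2\le 2\beta(\varepsilon)+2\varepsilon\big(c\,\omega_{N-1}\log\lambda+M\big).
\end{equation}
Choose $\varepsilon<N/(2c\,\omega_{N-1})$; then $\beta(\varepsilon)$ is a fixed finite number. Letting $\lambda\to\infty$, the left-hand side grows like $N\log\lambda$ while the right-hand side grows only like $2\varepsilon c\,\omega_{N-1}\log\lambda$, which is a contradiction. Hence $\mathcal{L}$ is not supercontractive. Since ultracontractivity implies supercontractivity by definition, $\mathcal{L}$ is not ultracontractive either.
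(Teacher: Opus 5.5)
Your proof is correct, and it takes a different route from the paper's. The paper never tests the inequality on explicit functions. It combines the kernel bound near the diagonal with the L\'evy condition away from it to get $\overline{\mathcal{E}}_{\mathcal{L}}(f)\le \tilde c\,(\overline{\mathcal{E}}_{\log}(f)+\|f\|_2^2)$. This turns a logarithmic Sobolev inequality with arbitrarily small $\varepsilon$ for $\mathcal{L}$ into one for $\log(I-\Delta)$. The converse half of \Cref{re:supercontractivity} would then make $\log(I-\Delta)$ supercontractive, contradicting its gradual smoothing from \Cref{se:logoperator}.

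You use only the forward half of \Cref{re:supercontractivity} and refute the inequality directly with dilations, exploiting the scale invariance of $|z|^{-N}$. The entropy of $f_\lambda^2$ grows like $N\log\lambda$, while the energy grows at most like $c\,\omega_{N-1}\log\lambda$. Your factor $2\|\varphi\|_2^2$ correctly uses $\varphi\ge 0$, though any fixed constant would suffice.

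The paper's argument is shorter and showcases its transfer-by-comparison idea. However, it rests on the converse direction of \Cref{re:supercontractivity}, and hence on \Cref{re:logtohyper,re:logtostronghyper}. It also needs the fact that $\log(I-\Delta)$ is not supercontractive, which the paper supports only by invoking the optimality of the Bessel potential embeddings.

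Your argument is self-contained; applied with $J=J_{\log}$, it proves that last fact. It is also quantitative: any inequality $\ent(f^2)\le C\|f\|_2^2+D\,\overline{\mathcal{E}}(f)$ for such an $\mathcal{L}$ forces $D\ge N/(c\,\omega_{N-1})$. Since $|z|^N J_{\log}(z)$ is nonincreasing in $|z|$ with limit $\Gamma(N/2)/\pi^{N/2}$ at the origin, one may take $c\,\omega_{N-1}=2$ for $\log(I-\Delta)$. This shows that the constant $D(2)=N/2$ in \Cref{re:logdellog} cannot be improved.
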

\begin{proof}
    From the upper bound~\eqref{eq:upperlogbound} we obtain, just like before, 
    \begin{equation} \label{eq:energycomparison}
        \mathcal{E}_{\mathcal{L}} (|f|^{p-1}f, f) \leq \widetilde{c} \left( \mathcal{E}_{\log} (|f|^{p-1}f, f) + \|f\|_p^p \right)
    \end{equation}
    for some~$\widetilde{c}>0$. If~$\mathcal{L}$ were supercontractive, it would satisfy a logarithmic Sobolev inequality of type~\eqref{eq:logdelsuper}, with a free parameter~$\varepsilon>0$, thanks to~\Cref{re:supercontractivity}. But then,~\eqref{eq:energycomparison} would imply that~$\log(I-\Delta)$ also satisfies this same logarithmic Sobolev inequality. Therefore, again by~\Cref{re:supercontractivity},~$\log(I-\Delta)$ would be supercontractive, which is a contradiction. 
\end{proof}

We have shown that the gradual improvement in integrability that appears for solutions of~\eqref{problem-L} is not exclusive of~$\mathcal{L} = \log(I-\Delta)$: any operator~$\mathcal{L}$ of the form~\eqref{general-L} whose kernel is comparable to the one of~$\log(I-\Delta)$ for small interactions satisfies the same strong hypercontractivity. However, for~$\log(I-\Delta)$, solutions get bounded immediately after getting into every $L^q$ for $q$ finite. In the next section we show that the same holds assuming that the operator~$\mathcal{L}$ is also translation invariant.

\section{Translation-invariant operators} \label{se:convolution}

In the special case where~$\mathcal{L}$ is a translation-invariant Lévy operator, i.e., the kernel~$J$ defining it through~\eqref{general-L} has the form~$J(x,y)=\widetilde{J}(x-y)$, solutions to~$\partial_t u + \mathcal{L}u = 0$ can be written as a convolution with its fundamental solution~$\mathcal{H}_t$,
$$
u(t)=\mathcal{H}_t*f,\qquad \widehat{\mathcal{H}_t}(\xi)=e^{-m(\xi)t},
$$
where the multiplier~$m$ is given by the L\'evy-Khintchine formula
\begin{equation}\label{eq:Levy.Khintchine}
    m(\xi)=\int_{\mathbb{R}^N}(1-\cos(z \cdot \xi))\widetilde{J}(z)\,dz,
\end{equation}
which is well defined for Lévy kernels. Furthermore, we know that there exist~$C_1, C_2 > 0$ such that
\begin{equation}
    C_1 \min\{1, |\xi|^2\} \leq m(\xi) \leq C_2 \max\{1, |\xi|^2\}.
\end{equation}
In general,~$\mathcal{H}_t$ may not be a function, but it is a probability measure for all~$t>0$ thanks to~$\mathcal{L}$ being a Lévy operator. For instance, the biharmonic Laplacian~$\Delta^2$ is not a Lévy operator, and the fundamental solution of~$\partial_t u +\Delta^2 u=0$ is not a probability measure. Additionally, the Fourier multiplier of the biharmonic Laplacian grows polynomially, which is too much growth for gradual improvement to hold.  For~$m$ to satisfy L\'evy--Khintchine's formula, and therefore to correspond to a L\'evy operator, it must verify $m(\xi) = -\psi(-\xi)$ for $\psi$ the characteristic exponent of an infinitely divisible distribution; see, for instance,~\cite{SatoLevy}.

Let us focus first on the fundamental solution. Observe that~$m(0) = 0$, so that
\[ 
    \displaystyle\int_{\mathbb{R}^N}\mathcal{H}_t = e^{-tm(0)} = 1,
\]
from where conservation of mass follows. Then, using Hausdorff-Young's inequality, for~$r\geq 2$ we get
\begin{equation}
    \|\mathcal{H}_t\|_r \leq \|e^{-tm}\|_{r'} = \|e^{-m}\|_{tr'}^t.
\end{equation}
This will allow us to obtain further integrability properties for the heat kernel.

\begin{Proposition} \label{re:fourierimprovement}
    Let~$1 \leq p < \infty$. If~$e^{-m} \in L^p(\RR^N)$, then~$\mathcal{H}_t \in L^1(\RR^N) \cap L^{\frac{p}{p-t}}(\RR^N)$ for~$t \in [p/2,p)$. In particular,~$\mathcal{H}_t \in L^1(\RR^N) \cap L^{\infty}(\RR^N)$ for~$t \geq p$.
\end{Proposition}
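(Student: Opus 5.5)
The natural route is through the Hausdorff--Young estimate displayed just before the statement, applied with the exponent chosen so that the Fourier-side norm is exactly a power of $\|e^{-m}\|_p$. Since $\mathcal{H}_t$ is a probability measure, the $L^1$ part of the claim is really the statement that $\mathcal{H}_t$ is a function. I would obtain this from $\widehat{\mathcal{H}_t}=e^{-tm}\in L^2(\RR^N)$, which by Plancherel gives an $L^2$ density; a finite nonnegative measure with an $L^2$ density has an $L^1$ density of mass $e^{-tm(0)}=1$.

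For the main estimate, fix $t\in[p/2,p)$ and set $r=\frac{p}{p-t}$. Then $r'=\frac{p}{t}$ and $tr'=p$. The condition $t\ge p/2$ is exactly $r\ge 2$, which is the range of Hausdorff--Young, and $t<p$ guarantees that $r$ is finite. We then get
\[
\|\mathcal{H}_t\|_{\frac{p}{p-t}}\le \|e^{-tm}\|_{p/t}=\Big(\int_{\RR^N} e^{-pm}\Big)^{t/p}=\|e^{-m}\|_p^{t}<\infty .
\]
Two points need care in making this rigorous:
\begin{itemize}
\item Since $0\le e^{-m}\le 1$, we have $e^{-tm}\in L^{p/t}\cap L^\infty\subset L^2$ for $t\ge p/2$. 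This justifies that $\mathcal{H}_t$ is an $L^2$ function, which is the $L^1$ claim above.
\item The Hausdorff--Young inequality is being used in its dual form: a function whose Fourier transform lies in $L^{r'}$ with $r'\le 2$ lies in $L^r$. This holds because $e^{-tm}\in L^2\cap L^{r'}$, so its inverse Fourier transform is defined in $L^2$ and coincides with $\mathcal{H}_t$.
\end{itemize}

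For $t\ge p$, we have $e^{-tm}\le e^{-pm}\le e^{-m}$ whenever $p\ge1$ (because $m\ge0$), and $e^{-pm}\le e^{-m\cdot 1}$ to the appropriate power: explicitly $\int e^{-tm}\le\int e^{-pm}=\|e^{-m}\|_p^p<\infty$. Hence $e^{-tm}\in L^1$, and by Riemann--Lebesgue $\mathcal{H}_t$ is bounded (indeed continuous) with $\|\mathcal{H}_t\|_\infty\le\|e^{-tm}\|_1$. This is the endpoint $r=\infty$ of the previous computation. Together with the $L^1$ part, this gives $\mathcal{H}_t\in L^1\cap L^\infty$.

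The only real obstacle is the measure-versus-function issue: $\mathcal{H}_t$ is a priori just a probability measure. I would settle it by identifying it with the $L^2$ inverse Fourier transform of $e^{-tm}$, testing against Schwartz functions. The exponent bookkeeping is routine.
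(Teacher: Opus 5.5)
Your proof is correct and follows the paper's argument: Hausdorff--Young with $r=\frac{p}{p-t}$ and $r'=\frac{p}{t}$, so that $\|\mathcal{H}_t\|_{\frac{p}{p-t}}\le\|e^{-m}\|_p^{t}$, where $t\ge p/2$ is exactly the condition $r\ge 2$. The differences are minor. You justify via Plancherel that $\mathcal{H}_t$ is an $L^1$ function, which the paper only asserts. For $t>p$ you use $e^{-tm}\le e^{-pm}$ directly on the Fourier side, whereas the paper uses that $\mathcal{H}_t$ solves the equation, together with the $L^\infty$ bounds from the Appendix.
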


\begin{proof}
    Choosing~$r = \frac{p}{p-t}$ we get that~$r'=p/t$, and thus~$\mathcal{H}_t \in L^{\frac{p}{p-t}}(\RR^N)$ thanks to Hausdorff-Young's inequality, under the constraint that~$t \geq p/2$. Furthermore,~$\mathcal{H}_t$ is not only a probability measure, but also a function in~$L^1$. The~$L^\infty$ bound is deduced by choosing~$t=p$. For the bounds when~$t > p$, we simply use that~$\mathcal{H}_t$ is the solution to the PDE and the results from the~\hyperref[se:existence]{Appendix}.
\end{proof}
\Cref{re:fourierimprovement} does not give any information about the fundamental solution for small times, which implies that we cannot deduce from it how $\mathcal{H}_t$ gets into~$L^q$ for~$q\in (1,2)$. However,  any improvement in the integrability of the fundamental solution yields eventual ultracontractivity.

\begin{Corollary} \label{re:translationequivalence}
    If~$\mathcal{H}_{t_0} \in L^p(\RR^N)$ for some $p\in(1,\infty]$ at some time $t_0>0$, then~$\mathcal{L}$ is eventually ultracontractive.
\end{Corollary}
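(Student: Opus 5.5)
The plan is to bootstrap the integrability of $\mathcal{H}_{t_0}$ into boundedness through the semigroup property $\mathcal{H}_{t+s}=\mathcal{H}_t*\mathcal{H}_s$ and Young's inequality, and then to pass the bound to solutions. If $p=\infty$ there is nothing to bootstrap, so assume $1<p<\infty$. Since $\mathcal{H}_{t_0}\in L^1\cap L^p$ (it is a probability measure that is also a function in $L^p$), interpolation gives $\mathcal{H}_{t_0}\in L^r$ for every $r\in[1,p]$.

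The key step is an iteration. By Young's inequality with $1+\frac1{r}=\frac1{p}+\frac1{p}$,
\[
\|\mathcal{H}_{2t_0}\|_r\le\|\mathcal{H}_{t_0}\|_p^2,\qquad \frac1r=\frac2p-1 .
\]
This is valid as long as $2/p-1\ge0$, that is, $p\le2$. Iterating $k$ times yields $\mathcal{H}_{2^kt_0}\in L^{r_k}$ with $\frac1{r_k}=1-2^k\bigl(1-\frac1p\bigr)$. Because $1-\frac1p>0$, the exponent $\frac1{r_k}$ decreases by a fixed positive amount at each doubling, so it reaches zero after finitely many steps. At the first stage where the Young condition would give a negative exponent, I would instead use Hölder/Young with conjugate exponents: if $\mathcal{H}_{s}\in L^{q}$ with $q\ge2$, then $\mathcal{H}_{s}\in L^{q'}$ as well (interpolate between $L^1$ and $L^q$, since $q'\le q$), and therefore $\|\mathcal{H}_{2s}\|_\infty\le\|\mathcal{H}_s\|_q\|\mathcal{H}_s\|_{q'}$. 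This gives some finite $T$ with $\mathcal{H}_T\in L^\infty$. For $t\ge T$, writing $\mathcal{H}_t=\mathcal{H}_{t-T}*\mathcal{H}_T$ and using that $\mathcal{H}_{t-T}$ is a probability measure gives $\|\mathcal{H}_t\|_\infty\le\|\mathcal{H}_T\|_\infty$.

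To conclude eventual ultracontractivity in the sense of \eqref{def:ultracontractive}, I also need $\mathcal{H}_t\in L^2$. This follows from $L^1\cap L^\infty$, so by Young's inequality $\|u(t)\|_\infty=\|\mathcal{H}_t*u_0\|_\infty\le\|\mathcal{H}_t\|_2\|u_0\|_2$ for all $t\ge T$. The same argument gives $L^p$–$L^\infty$ bounds with $\|\mathcal{H}_t\|_{p'}$.

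The main obstacle is the endpoint bookkeeping in the iteration. At the start I must ensure $\mathcal{H}_{t_0}$ is a genuine $L^1$ function, not merely a measure. I take this as granted from the hypothesis, since a probability measure lying in $L^p$ is absolutely continuous. I also need to handle the last doubling, where Young's exponent condition fails, by the Hölder trick described above. Everything else reduces to routine applications of Young's inequality and the semigroup property.
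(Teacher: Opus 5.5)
Your argument is correct, but it stays in physical space, whereas the paper goes through the Fourier side. The paper interpolates to get $\mathcal{H}_{t_0}\in L^q$ for some $q\in(1,2]$ and applies Hausdorff--Young to obtain $e^{-t_0m}\in L^{q'}$, that is, $e^{-m}\in L^{t_0q'}$. It then invokes \Cref{re:fourierimprovement}, which rests on $\|\mathcal{H}_t\|_\infty\le\|e^{-tm}\|_1=\|e^{-m}\|_t^t$. This gives $\mathcal{H}_t\in L^1\cap L^\infty$ for all $t\ge t_0q'$, and Young's inequality finishes.

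Your dyadic iteration $\mathcal{H}_{2s}=\mathcal{H}_s*\mathcal{H}_s$, with $\frac1{r_{k+1}}=\frac2{r_k}-1$ and closed off by the $L^{q}\times L^{q'}\to L^\infty$ step once the exponent passes $2$, uses only three things: the semigroup property, unit mass, and Young's inequality. So it needs neither the multiplier $m$ nor Hausdorff--Young, and it would apply to any convolution semigroup of probability densities, symmetric or not.

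What the Fourier route buys is continuous dependence on time. Since $e^{-tm}=(e^{-m})^t$, integrability on the Fourier side improves for every real $t$. With the best choice $q=\min\{p,2\}$ this gives the threshold $t_0p'$ when $1<p<2$ and $2t_0$ when $p\ge2$. Your doubling only reaches times of the form $2^{k+1}t_0$: it matches $2t_0$ for $p\ge2$, but for $p<2$ it lands in $[t_0p',2t_0p')$. Since the corollary only asks for some finite time, this loss is immaterial.
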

\begin{proof}
    If~$\mathcal{H}_{t_0} \in L^p(\RR^N)$, then~$\mathcal{H}_{t_0} \in L^1(\RR^N) \cap L^p(\RR^N)$. Through interpolation, $\mathcal{H}_{t_0} \in L^q(\RR^N)$ for some $q \in (1,2]$. Then, by Hausdorff-Young's inequality,~$e^{-t_0 m} \in L^{q'}(\RR^N)$ and the result follows from~\Cref{re:fourierimprovement} and Young's inequality.
\end{proof}
The reciprocal statement is also true.
\begin{Proposition} \label{re:translationequivalence2}
    If~$\mathcal{L}$ is eventually ultracontractive for some $p>1$, i.e., there exists some $t_0$ such that
    \begin{equation} \label{eq:ultracondition}
        \|H_{t_0} * u_0\|_\infty \leq C\|u_0\|_p
    \end{equation}
    for some $C>0$, then $H_{t_0} \in L^{p'}(\RR^N)$.
\end{Proposition}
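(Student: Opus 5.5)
The plan is a standard duality argument: the pointwise value of the convolution at the origin is a bounded linear functional on $L^p(\RR^N)$, and its Riesz representative is the reflected kernel. Since $\mathcal{L}$ is translation invariant with symmetric kernel, $\widetilde J(z)=\widetilde J(-z)$, so the multiplier $m$ in~\eqref{eq:Levy.Khintchine} is even and real. Hence $H_{t_0}$ is symmetric, $H_{t_0}(-x)=H_{t_0}(x)$, and nonnegative, being a probability measure. We first treat $H_{t_0}$ as a locally integrable function. Then we argue that~\eqref{eq:ultracondition} forces it to be one; this is the main technical point, addressed in the last paragraph.

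Fix a test function $u_0\in C_c(\RR^N)$, or more generally any $u_0\in L^p\cap L^\infty$ with compact support. Then $H_{t_0}*u_0$ is continuous, being the convolution of a finite measure with a uniformly continuous function. Consequently its $L^\infty$ norm dominates its value at $x=0$:
\[
\Big|\int_{\RR^N} H_{t_0}(y)\,u_0(y)\,dy\Big| = |(H_{t_0}*u_0)(0)| \le \|H_{t_0}*u_0\|_\infty \le C\|u_0\|_p ,
\]
where we used the symmetry of $H_{t_0}$. Thus the linear functional $\Lambda(u_0)=\int H_{t_0}u_0$ is bounded on the dense subspace $C_c(\RR^N)$ of $L^p(\RR^N)$, with $1<p<\infty$. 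It therefore extends to a bounded functional on $L^p$.

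By the Riesz representation theorem, $(L^p)^*=L^{p'}$, so there is $g\in L^{p'}$ with $\|g\|_{p'}\le C$ and $\int H_{t_0}u_0=\int g u_0$ for all $u_0\in C_c$. This identifies $H_{t_0}=g$ as distributions, hence $H_{t_0}\in L^{p'}(\RR^N)$. This also settles the absolute-continuity issue: a measure that agrees with an $L^{p'}$ function when tested against $C_c$ is that function.

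The remaining care concerns the pointwise evaluation step when $H_{t_0}$ is a priori only a measure. To handle it, I would prefer a truncation argument, which also gives the explicit bound $\|H_{t_0}\|_{p'}\le C$. Test with $u_0=\min\{H_{t_0},k\}^{p'-1}\chi_{B_R}$ once $H_{t_0}$ is known to be a function, and let $k,R\to\infty$ by monotone convergence. In the pure measure case the duality argument above already suffices. The case $p=\infty$ is excluded, and would only give $H_{t_0}\in L^1$, which is trivial.
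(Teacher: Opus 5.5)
Your proposal is correct and follows essentially the paper's route: evaluate $H_{t_0}*u_0$ at the origin to get a bounded linear functional on $L^p$, then conclude by $L^p$--$L^{p'}$ duality (you use the evenness of $H_{t_0}$ where the paper uses the reflection $\tau_0$). Restricting to $C_c$ data makes $H_{t_0}*u_0$ continuous, so the point value is controlled by the essential supremum, and Riesz representation identifies the measure $H_{t_0}$ with an $L^{p'}$ function; both fill in details the paper passes over, and your final truncation paragraph is unnecessary.
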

\begin{proof}
    Define $\tau_z f(x) = f(z-x)$. Then, condition~\eqref{eq:ultracondition} implies that
    \begin{equation}
        |\langle H_{t_0}, \tau_0 f \rangle| \leq C \|\tau_0 f\|_p
    \end{equation}
    for every $f \in L^p(\RR^N)$, since $\tau_0$ is an isometry in $L^p(\RR^N)$. Hence, renaming $g = \tau_0 f$, and taking the supremum of $g \in L^p(\RR^N)$ such that $\|g\|_p = 1$, we conclude what we wanted.
\end{proof}

If~$m$ grows logarithmically at infinity, then~$e^{-m}$ clearly belongs to~$L^p$ for sufficiently large~$p$. The consequences of this are summarized in the following result, where we assume that~$m(\xi) \geq c \log(|\xi|^2)$ for~$|\xi|$ large; we may take~$c=1$ without loss of generality.

\begin{Proposition}\label{prop:in.Bessel}
    Let~$\mathcal{L}$ be a translation-invariant Lévy operator with multiplier~$m$ satisfying~$m(\xi) \geq \log(|\xi|^2)$ for~$|\xi| \gg 1$. Let~$u$ be the solution to problem~\eqref{problem-L} with~$u_0 \in L^p(\RR^N)$ for $p\geq 1$.
    \begin{itemize}
        \item If $1 \leq p < 2$,  $u(t) \in L^{\frac{(N + \varepsilon) p}{(N + \varepsilon) - 2pt}}(\RR^N)$ for $\frac {(N + \varepsilon)}{4} \leq t \leq \frac N2$. In particular,~$u(t) \in L^\infty(\RR^N)$ for~$t > \frac {N}{2p}$.
        \item  If $p=2$, then~${u(t)\in L^{2}_{2t}(\RR^N)}$ for every~$t \geq 0$. In particular,~$u(t) \in L^\infty(\RR^N)$ for $t > \frac {N}{4}$.
        \item If $p > 2$, then~$u(t) \in L^\infty(\RR^N)$ for~$t > \frac {N}{4}$.
    \end{itemize}
\end{Proposition}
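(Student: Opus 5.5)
The plan is to compare the multiplier $e^{-tm}$ with the Bessel multiplier $(1+|\xi|^2)^{-t}$, and then to combine \Cref{re:fourierimprovement} with Young's inequality.

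\textbf{Pointwise bound on the multiplier.} Since $\mathcal{L}$ is a Lévy operator, \eqref{eq:Levy.Khintchine} gives $m\ge 0$, hence $0<e^{-tm}\le 1$ everywhere. By hypothesis there is $R>1$ with $m(\xi)\ge\log(|\xi|^2)$ for $|\xi|\ge R$. There $e^{-tm(\xi)}\le|\xi|^{-2t}\le 2^{t}(1+|\xi|^2)^{-t}$, using $1+|\xi|^2\le 2|\xi|^2$ for $|\xi|\ge1$. On $B_R$ we have $e^{-tm}\le 1\le(1+R^2)^{t}(1+|\xi|^2)^{-t}$. Therefore
\[
(1+|\xi|^2)^{t}e^{-tm(\xi)}\le c_t\quad\text{for all }\xi\in\RR^N,
\]
with $c_t=\max\{2^t,(1+R^2)^t\}$. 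In particular $e^{-m}\in L^{q}(\RR^N)$ whenever $2q>N$: it is bounded on $B_R$ and dominated by $|\xi|^{-2q}$ outside. We will use $q=\frac{N+\varepsilon}{2}$.

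\textbf{Case $p=2$.} By Plancherel,
\[
\|(I-\Delta)^{t}u(t)\|_2=\|(1+|\xi|^2)^te^{-tm}\widehat{u_0}\|_2\le c_t\|u_0\|_2,
\]
so $u(t)\in L^2_{2t}(\RR^N)$ for every $t\ge0$. The Bessel-potential embeddings already invoked in the proof of \Cref{re:solutionforlog} give $L^2_{2t}\subset\cont_0\subset L^\infty$ as soon as $2t>N/2$, that is, $t>N/4$.

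\textbf{Case $p>2$.} For $s>N/4$ we have $e^{-sm}\in L^2$, so by Plancherel $\mathcal{H}_s\in L^2$. A probability measure whose Fourier transform lies in $L^2$ is absolutely continuous with an $L^2$ density, and that density has total mass one, so $\mathcal{H}_s\in L^1\cap L^2$. Interpolating gives $\mathcal{H}_s\in L^{p'}$, since $p'\in(1,2)$. Young's inequality then yields $\|u(s)\|_\infty\le\|\mathcal{H}_s\|_{p'}\|u_0\|_p$ for all $s>N/4$. Alternatively, one can write $\mathcal{H}_s=\mathcal{H}_{s/2}*\mathcal{H}_{s/2}$, but the direct argument suffices.

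\textbf{Case $1\le p<2$.} Apply \Cref{re:fourierimprovement} with exponent $q=\frac{N+\varepsilon}{2}$. It gives
\[
\mathcal{H}_t\in L^1\cap L^{\frac{q}{q-t}}\ \text{for } t\in[q/2,q),\qquad \mathcal{H}_t\in L^1\cap L^\infty\ \text{for } t\ge q.
\]
Young's inequality with $1+\frac1r=\frac1p+\frac{q-t}{q}$ gives $\frac1r=\frac1p-\frac{2t}{N+\varepsilon}$. Hence $u(t)\in L^{\frac{(N+\varepsilon)p}{(N+\varepsilon)-2pt}}$ for $\frac{N+\varepsilon}{4}\le t\le\frac N2$, with $\varepsilon$ small enough that the exponent stays positive. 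The upper limit $N/2<q$ keeps us inside the range of \Cref{re:fourierimprovement}.

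\textbf{Boundedness for $1\le p<2$.} Given $t>\frac N{2p}$, choose $\varepsilon>0$ so small that $2pt>N+\varepsilon$, i.e.\ $t/q>1/p$. If $t<q$, we have $\mathcal{H}_t\in L^1\cap L^{q/(q-t)}$ and $\frac{q-t}{q}<\frac1{p'}$. If $t\ge q$, we have $\mathcal{H}_t\in L^1\cap L^\infty$. In both cases $\mathcal{H}_t\in L^{p'}$ by interpolation, and Young's inequality gives $u(t)\in L^\infty$. For $p=1$ this reads $\mathcal{H}_t\in L^\infty$ for $t\ge q$, which holds once $t>N/2$.

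\textbf{Main obstacle.} The delicate points are the bookkeeping of the admissible ranges, so that $t$ stays in $[q/2,q)$ or we switch to the $t\ge q$ statement, and the justification that $\mathcal{H}_t$ is a genuine $L^1$ function rather than merely a measure. The latter I will handle via the $L^2$ Fourier transform argument above, or equivalently via the observation already made in the proof of \Cref{re:fourierimprovement}.
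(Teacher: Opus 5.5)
Your proposal is correct and follows the paper's own route. The bound $e^{-m}\le|\xi|^{-2}$ outside a ball gives $e^{-m}\in L^{(N+\varepsilon)/2}(\RR^N)$; then \Cref{re:fourierimprovement} plus Young's inequality handles $p\neq2$, and Plancherel with the boundedness of $(1+|\xi|^2)^te^{-tm}$ handles $p=2$. Your $p>2$ argument via $e^{-sm}\in L^2$ is just the endpoint $t=q/2$ of that proposition made explicit.

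One small slip: positivity of the exponent $\frac{(N+\varepsilon)p}{(N+\varepsilon)-2pt}$ requires $2pt<N+\varepsilon$, which for fixed $t$ is a \emph{lower} bound on $\varepsilon$, not an upper one. When $2pt\ge N+\varepsilon$ you are instead in your boundedness regime, which you treat correctly, so nothing is lost.
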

\begin{proof}
    We know that the multiplier~$m$ is locally bounded. If there exists~$R>0$ such that~$m(\xi) \geq \log(|\xi|^2)$ for~$|\xi| \geq R$, then~$e^{-m(\xi)} \leq |\xi|^{-2}$. This implies that for any $\varepsilon>0$, $e^{-m(\xi)} \in L^{(N + \varepsilon)/2}(\RR^N)$. Then, the result for $p\neq 2$ follows by~\Cref{re:fourierimprovement} and Young's inequality.
    If $u_0 \in L^2(\RR^N)$, we use that~$\widehat{u}(\xi, t) = e^{-t m(\xi)} \widehat{u}_0(\xi)$ and conclude that~$u(t) \in L^2_{2t}(\RR^N)$ since~$(1+|\xi|^2)^t e^{-t m(\xi)}$ is bounded.
\end{proof}

The previous hypotheses were given in terms of the multiplier~$m$. We would also like to give a condition in terms of the kernel~$\widetilde J$ of the operator. 
Let~$\ell(|z|) = |z|^{N}\widetilde{J}(z)$. Define
\begin{gather}\label{eq:def.psi1.psi2}
    \psi_1^\ell(r) = \int_r^1 \frac{\ell(s)}{s} \, ds, \qquad \psi_2^\ell(r) = \frac{1}{r^2}\int_0^r s\ell(s) \, ds.
\end{gather}
Recall that integrable kernels cannot satisfy any kind of hypercontractivity. Therefore, we only consider non-integrable kernels; thus,~$\lim\limits_{r \to 0} \psi_1^\ell(r) = \infty$. We have that the multiplier~$m(\xi)$ satisfies
\begin{equation} \label{eq:multiplierupper}
    m(\xi)\leq c_1\psi_1^\ell\left(\frac{1}{|\xi|}\right)+c_2\psi_2^\ell\left(\frac{1}{|\xi|}\right)\quad\text{for }|\xi|>1.
\end{equation}

On the contrary, under the additional hypotheses that~$\ell(r) \geq \beta(r)$ in~$r \in (0,1)$, for~$\beta$ a positive function that varies regularly at zero with index~$\rho\in (-2,0]$, that is,
\begin{equation}
    \lim_{r\to 0^+} \frac{\beta(\lambda r)}{\beta(r)} = \lambda^\rho\quad\text{for every }\lambda>0,
\end{equation}
we have the lower estimate
\begin{equation} \label{eq:lowerboundregularlyvarying}
    m(\xi) \geq c_3 \psi_1^\beta\left(\frac{1}{|\xi|}\right)\quad\text{for } |\xi| > 1
\end{equation}
for some $c_3>0$ depending on $N$ and $\widetilde{J}$; see~\cite{KassmannMimicaIntrinsicScaling}, and also~\cite{ArturoCristinaRegularizingEffect}. In the particular case in which $\ell\ge\nu$ near the origin for some positive constant, i.e.,  if $\widetilde J(z)\ge \nu|z|^{-N}$
for $|z|<1$, then
$$
m(\xi)\ge \nu\int_{|y|<1}\frac{1-\cos(y\cdot\xi)}{|y|^N}\,dy=
\nu\int_{|z|<|\xi|}\frac{1-\cos(z_1)}{|z|^N}\,dz=\nu\left( \omega_{N-1} \log|\xi|+O(1)\right),
$$
for $|\xi|\gg1$.

We found what we wanted: a hypercontractivity result in terms of the kernel.
\begin{Corollary} \label{re:translationeventualultra}
    Let~$\mathcal{L}$ be a translation-invariant Lévy operator. If~$\widetilde{J}(z) \geq \nu|z|^{-N}$ for~$|z|<1$ and $\nu>0$, the solution~$u$ of problem~\eqref{problem-L} satisfies the conclusions of~\Cref{prop:in.Bessel} with times depending on~$\nu$ and~$N$.
\end{Corollary}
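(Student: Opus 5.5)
The plan is to reduce the statement to \Cref{prop:in.Bessel} by showing that the kernel bound forces logarithmic growth of the multiplier, and then to rescale time. We start from the Lévy--Khintchine formula~\eqref{eq:Levy.Khintchine}. Since $1-\cos(z\cdot\xi)\ge0$ and $\widetilde J\ge0$, we may drop the region $|z|\ge1$ and use $\widetilde J(z)\ge\nu|z|^{-N}$ on $|z|<1$. This gives
\[
m(\xi)\ge \nu\int_{|y|<1}\frac{1-\cos(y\cdot\xi)}{|y|^N}\,dy .
\]
By rotation invariance we may take $\xi=|\xi|e_1$. The substitution $z=|\xi|y$ then turns the right-hand side into $\nu\int_{|z|<|\xi|}(1-\cos z_1)|z|^{-N}\,dz$.

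The main step is to check that this integral equals $\omega_{N-1}\log|\xi|$ up to a bounded error as $|\xi|\to\infty$. On $|z|<1$ the integrand is bounded by $|z|^{2-N}/2$, so that piece is finite. On $1<|z|<|\xi|$ we write $1-\cos z_1=1-\cos(r\theta_1)$ in polar coordinates $z=r\theta$. The constant term contributes exactly $\omega_{N-1}\log|\xi|$.

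The cosine term is $\int_1^{|\xi|} r^{-1}\int_{S^{N-1}}\cos(r\theta_1)\,d\theta\,dr$. The spherical average $\int_{S^{N-1}}\cos(r\theta_1)\,d\theta$ is a Bessel-type function, equal to $c_N r^{-(N-2)/2}J_{(N-2)/2}(r)$, and it decays like $r^{-(N-1)/2}$. For $N\ge2$ this makes $\int_1^\infty r^{-1}|\cdot|\,dr$ finite. For $N=1$ the term is $\int_1^{|\xi|}\cos r\,\frac{dr}{r}$, which converges conditionally because it is a Dirichlet-type integral. In every dimension we therefore obtain
\[
m(\xi)\ge \nu\omega_{N-1}\log|\xi|-C_0=\tfrac{\nu\omega_{N-1}}{2}\log|\xi|^2-C_0\quad\text{for }|\xi|\gg1 .
\]
This oscillatory-integral bound is the step I expect to need the most care. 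An alternative that avoids Bessel asymptotics is to integrate by parts in $r$ once to control the cosine term.

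Next we set $\kappa=\nu\omega_{N-1}/2$, absorb $C_0$ into a slightly smaller constant, and rescale. For any $\kappa'<\kappa$ we have $m\ge\kappa'\log|\xi|^2$ for large $|\xi|$, so the rescaled operator $\mathcal L/\kappa'$ has multiplier $m/\kappa'\ge\log|\xi|^2$ for $|\xi|\gg1$. Its solution at time $s$ is $u(s/\kappa')$, since $e^{-s m/\kappa'}=e^{-(s/\kappa')m}$.

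Applying \Cref{prop:in.Bessel} to $\mathcal L/\kappa'$ gives all the stated conclusions for $u$, with every time threshold multiplied by $1/\kappa'$. For example, $u(t)\in L^\infty(\RR^N)$ for $t>N/(4\kappa')$ when $p\ge2$, and $u(t)\in L^2_{2\kappa' t}(\RR^N)$ when $p=2$. The new thresholds depend only on $\nu$ and $N$ (through $\omega_{N-1}$ and the choice of $\kappa'$), which is exactly the statement.
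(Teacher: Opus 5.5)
Your proposal is correct and follows essentially the same route as the paper. The paper also drops the region $|z|\ge1$ in the L\'evy--Khintchine formula~\eqref{eq:Levy.Khintchine}, rescales to obtain $\nu\int_{|z|<|\xi|}(1-\cos z_1)|z|^{-N}\,dz=\nu(\omega_{N-1}\log|\xi|+O(1))$, and then applies \Cref{prop:in.Bessel} after a time rescaling that normalizes the constant in front of $\log|\xi|^2$. The difference is that you justify the $O(1)$ term, using the Bessel-function decay of the spherical average for $N\ge2$ and the Dirichlet-type integral for $N=1$, and you write out the rescaling $t\mapsto t/\kappa'$; the paper leaves both implicit.
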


The previous results relied on studying the fundamental solution through its Fourier transform, and they have two clear drawbacks, already apparent in~\Cref{prop:in.Bessel}: 
\begin{itemize}
    \item The first one is that we obtain no information for times $t\leq \frac N4$ (unless $p=2$). 
    \item The second one is that we do not find the optimal times at which solutions get bounded for~$p>2$.
\end{itemize}
We propose a different approach that does not rely on the Fourier transform, and thus avoids these issues. This alternative method also has the advantage of yielding the optimal times after which solutions get bounded.

Assume that there exists~$c>0$ such that~$\widetilde{J} \geq c \widetilde{J}_{\log}$, at least for small interactions. With the change of variables~$t' = t/c$, we get that if~$u(t)$ verifies~$\partial_t u + \mathcal{L}u = 0$, then~$u(t')$ verifies~$\partial_t u + c\mathcal{L}u = 0$. Hence, without loss in generality, we take $c=1$.

\begin{Theorem} \label{re:commutationtheorem}
    Let~$u$ be the solution to~\eqref{problem-L} with~$\mathcal{L}$ a translation-invariant Lévy operator and initial data~$u_0 \in L^2(\RR^N) \cap L^{p}(\RR^N)$ for some $p\geq 1$. Assume that~$\widetilde{J}(z) \geq \widetilde{J}_{\log}(z)$ for $|z|<1$. Then,~$u(t) \in L_{2t}^2(\RR^N) \cap L_{2t}^p(\RR^N)$ for every~$t>0$. If, moreover,~$\widetilde{J}(z) \geq \widetilde{J}_{\log}(z)$ for all $z \in \RR^N$, then
    \begin{equation} \label{eq:Lpestimate}
        \|u(t)\|_{L_{2t}^p(\RR^N)} \leq \|u_0\|_p.
    \end{equation}
\end{Theorem}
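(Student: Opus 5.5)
The label of the theorem suggests the approach: a commutation (semigroup factorization) argument. Since $\widetilde J\ge \widetilde J_{\log}$, the difference $\widetilde K:=\widetilde J-\widetilde J_{\log}$ is nonnegative, symmetric, and satisfies the Lévy condition \eqref{eq:conditions.Levy.kernel}. So $\mathcal{L}=\log(I-\Delta)+\mathcal{K}$, where $\mathcal{K}$ is the translation-invariant Lévy operator with kernel $\widetilde K$. All three are Fourier multipliers, with symbols related by $m=m_{\log}+m_K$ and $m_K\ge0$ by \eqref{eq:Levy.Khintchine}. They therefore commute, and
\[
e^{-t\mathcal{L}}=e^{-t\log(I-\Delta)}\,e^{-t\mathcal{K}}=(I-\Delta)^{-t}e^{-t\mathcal{K}},
\qquad\text{i.e.}\qquad \mathcal{H}_t=H_t*\mathcal{K}_t ,
\]
where $\mathcal{K}_t$ is the fundamental solution of $\mathcal{K}$. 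Because $\mathcal{K}$ is a Lévy operator, $\mathcal{K}_t$ is a probability measure.

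\textbf{Global case.} Assume first $\widetilde J\ge\widetilde J_{\log}$ everywhere. Then $(I-\Delta)^t u(t)=e^{-t\mathcal{K}}u_0=\mathcal{K}_t*u_0$. Young's inequality for a probability measure gives
\[
\|u(t)\|_{L^p_{2t}}=\|\mathcal{K}_t*u_0\|_p\le\|u_0\|_p
\]
for every $p\in[1,\infty]$, which is \eqref{eq:Lpestimate}; the same bound with $p=2$ gives $u(t)\in L^2_{2t}$. At the Fourier level the identity is simply $(1+|\xi|^2)^t e^{-tm(\xi)}=e^{-tm_K(\xi)}$. This is the Fourier transform of a probability measure, so it is a bounded $L^p$ multiplier of norm at most $1$. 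For $p=1$ I take $L^1_{2t}$ to mean $\{f:(I-\Delta)^tf\in L^1\}$, understood distributionally.

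\textbf{Local case.} Now assume only $\widetilde J\ge \widetilde J_{\log}$ on $|z|<1$. Split $\widetilde J=\widetilde J_1+\widetilde J_2$ with
\[
\widetilde J_1=\max\{\widetilde J,\widetilde J_{\log}\},\qquad \widetilde J_2=\widetilde J-\widetilde J_1=-(\widetilde J_{\log}-\widetilde J)_+\,\chi_{\{|z|\ge1\}} .
\]
Then $\widetilde J_1\ge\widetilde J_{\log}$ globally, and $\widetilde J_2$ is a signed kernel with $|\widetilde J_2|\le \widetilde J_{\log}\chi_{\{|z|\ge1\}}\in L^1$.

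The operator with kernel $\widetilde J_2$ acts as $\mathcal{L}_2f=-(\mu*f-\|\mu\|_1 f)$ with $\mu=-\widetilde J_2\ge0$ integrable, so it is a bounded operator on every $L^p$. Its semigroup is
\[
e^{-t\mathcal{L}_2}=e^{-t\|\mu\|_1}\exp(t\mu*)=e^{-t\|\mu\|_1}\sum_k \frac{t^k\mu^{*k}}{k!}.
\]
Each term is convolution with a positive finite measure, so $e^{-t\mathcal{L}_2}$ is convolution with a positive measure of total mass $e^{2t\|\mu\|_1}$. (Its symbol is $-m_\mu$, which is bounded, so this is a bounded perturbation.)

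Since everything commutes,
\[
(I-\Delta)^t u(t)=e^{-t\mathcal{K}_1}e^{-t\mathcal{L}_2}u_0 ,
\]
where $\mathcal{K}_1$ has kernel $\widetilde J_1-\widetilde J_{\log}\ge0$. This yields
\[
\|u(t)\|_{L^p_{2t}}\le e^{2t\|\widetilde J_{\log}\|_{L^1(\RR^N\setminus B_1)}}\|u_0\|_p ,
\]
hence $u(t)\in L^2_{2t}\cap L^p_{2t}$.

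\textbf{Expected main obstacle.} The algebra above is routine. The delicate step is justifying the semigroup factorization for data that are not in $L^2$, and for $p=1$ where Bessel spaces are nonstandard. Here I would use the hypothesis $u_0\in L^2\cap L^p$. On $L^2$, the identity $e^{-t\mathcal{L}}=(I-\Delta)^{-t}e^{-t\mathcal{K}_1}e^{-t\mathcal{L}_2}$ holds exactly via the Fourier multipliers, since the symbols are real, add up, and make sense as tempered distributions. Then $(I-\Delta)^tu(t)=\nu_t*u_0$ with $\nu_t$ a finite positive measure. This gives the $L^p$ bound directly, without any density argument beyond identifying the mild solution with the convolution solution. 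That identification was noted in the preliminaries for $p=2$.

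A second point needing care is checking that $\widetilde J_1-\widetilde J_{\log}$ and $\mu$ satisfy the Lévy conditions, so that the corresponding measures are genuinely positive. This follows from $0\le \widetilde J_1-\widetilde J_{\log}\le\widetilde J$ and from the integrability of $J_{\log}$ away from the origin.
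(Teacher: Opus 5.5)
Your argument follows the paper's strategy by a different route. Both factor $e^{-t\mathcal{L}}=(I-\Delta)^{-t}\circ(\text{a L\'evy semigroup})\circ(\text{a bounded perturbation})$ using that translation-invariant operators commute, and both read $(I-\Delta)^{t}u(t)\in L^2\cap L^p$ off the last two factors.

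The paper builds the solution by hand. It splits $\widetilde J-\widetilde J_{\log}$ into its parts inside and outside $B_1$, applies Picard--Lindel\"of for $\mathcal{W}^{\textup{out}}$, then the semigroup of $\mathcal{W}^{\textup{in}}$, then $(I-\Delta)^{-t}$. Uniqueness comes from Hille--Yosida plus Gronwall and an approximation argument. You instead read the factorization directly from $m=m_{\log}+m_{K_1}+m_2$, once the $L^2$ mild solution is identified with the multiplier $e^{-tm}$.

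Your route is shorter and avoids the domain and uniqueness bookkeeping. It also gives an explicit constant in the local case, which the paper leaves implicit. Your $\max\{\widetilde J,\widetilde J_{\log}\}$ split differs too: it moves the positive part of $\widetilde J-\widetilde J_{\log}$ outside $B_1$ into the L\'evy factor, so the bounded remainder has a sign.

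One step is wrong as written, though the error is harmless. The kernel of $\mathcal{L}_2$ is $-\mu\le 0$, so $\mathcal{L}_2f=\mu*f-\|\mu\|_1f$, not $\|\mu\|_1f-\mu*f$. This is consistent with your own remark that its symbol is $-m_\mu=\hat\mu-\|\mu\|_1$.

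Hence $e^{-t\mathcal{L}_2}=e^{t\|\mu\|_1}\exp(-t\,\mu*)$, and this series alternates in sign. It is convolution with a \emph{signed} measure, and is in general not positivity preserving. Your formula $e^{-t\|\mu\|_1}\exp(t\,\mu*)$ would have total mass $1$, which contradicts the mass $e^{2t\|\mu\|_1}$ you claim.

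Positivity is not needed, however. The total variation is at most $e^{t\|\mu\|_1}\sum_k t^k\|\mu\|_1^k/k!=e^{2t\|\mu\|_1}$. That yields exactly your bound $\|u(t)\|_{L^p_{2t}}\le e^{2t\|\widetilde J_{\log}\|_{L^1(\RR^N\setminus B_1)}}\|u_0\|_p$, so the conclusion stands.
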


\begin{proof}
    Define the spaces
    \begin{align}
        D(\mathcal{L}) = \{f\in L^2(\RR^N)\,:\,\mathcal{L}f \in L^2(\RR^N)\}, \qquad D(\mathcal{L}^2) = \{f\in D(\mathcal{L})\,:\,\mathcal{L}f \in D(\mathcal{L})\},
    \end{align}
    and the operators
    \begin{align}
        \mathcal{W}^{\textup{in}}f(x) &= \text{P.V.}\int_{B_1(x)} (f(x) - f(x+z)) (\widetilde{J}(z)-\widetilde{J}_{\log}(z)) \,dz, \\
        \mathcal{W}^{\textup{out}} f(x) &= \int_{\RR^N \setminus B_1(x)} (f(x) - f(x+z)) (\widetilde{J}(z)-\widetilde{J}_{\log}(z)) \,dz. 
    \end{align}
    Observe that the second operator is bounded and linear in $D(\mathcal{L}^2)$. Let $u_0\in L^p(\RR^N) \cap D(\mathcal{L}^2)$, and let~$w \in \cont^1([0, \infty]; D(\mathcal{L}^2))$ be the unique solution of the ODE
    \begin{equation}
        \partial_t w = -\mathcal{W}^{\textup{out}}w\quad\textup{in }\RR^N\times(0,\infty),\qquad
            w(\cdot,0)=u_0\quad\textup{in }\RR^N,
    \end{equation}
    whose existence is guaranteed by Picard-Lindelöf's theorem. Since $\mathcal{W}_{in}$ is a maximal monotone operator, it is the infinitesimal generator of a solution semigroup $T(t)$, see the~\hyperref[se:existence]{Appendix}. Let $v(t) = T(t)w(t)$, and note that it satisfies the equation
    \begin{equation}
        \partial_t v = -\mathcal{W}^{\textup{in}}v - \mathcal{W}^{\textup{out}} v = -\big(\mathcal{L} - \log(I-\Delta) \big)v,
    \end{equation}
    with $v(\cdot,0)=u_0$ (this is proved using that translation-invariant operators commute). Finally, define~$u(t) = (I-\Delta)^{-t}v(t)$. Since~$v(t)\in L^2(\mathbb{R}^N)\cap L^p(\mathbb{R}^N)$ for all~$t>0$, then~$u(t)\in L^2_{2t}(\mathbb{R}^N) \cap L^p_{2t}(\mathbb{R}^N)$. On the other hand, using again that translation-invariant operators commute, we find that~$u$ satisfies problem~\eqref{problem-L} with initial data~$u_0$. It is straightforward to check that it is the unique solution: just use the theory of~\Cref{re:HilleYosida} with $\mathcal{L} = \mathcal{W}^{\textup{in}}$, adding a right-hand side $\mathcal{W}^{\textup{out}} u$, which can be managed be means of Gronwall's inequality. Then, since $\mathcal{L}$ is self-adjoint, we can use a standard approximation argument to prove that we can  take $u_0 \in L^2(\RR^N) \cap L^p(\RR^N)$;~see~\cite[Theorem~7.7]{Brezisbook}. 
    
    Finally, if~$\widetilde{J} \geq \widetilde{J}_{\log}$ in the whole $\RR^N$, then it is easy to see that $\|v(t)\|_p \leq \|u_0\|_p$, and thus 
    \begin{equation} \tag*{ }
        \|u(t)\|_{L_{2t}^p} = \|v(t)\|_p \leq \|u_0\|_p. \qedhere
    \end{equation}
\end{proof}

Thanks to the previous result and the embeddings used in~\Cref{re:solutionforlog}, we can obtain the optimal time threshold after which solutions get bounded, which is  $t>N/2p$. Observe that the result is valid for anisotropic kernels $\widetilde{J}$.

Finally, we address the question of whether $\widetilde{J}(z) \sim |z|^{-N} $ for~$|z| \sim 0$ constitutes the threshold for the gradual smoothing phenomenon. The answer is affirmative: any radial perturbation that increases the singularity of the kernel renders the operator ultracontractive, whereas any radial perturbation that decreases it prevents any improvement of the fundamental solution.

\begin{Theorem} \label{re:summaryresult}
    Assume that the kernel defining the operator~$\mathcal{L}$ satisfies~$\widetilde{J}(z) = \ell(|z|) |z|^{-N}$ for~$|z| \le1$, and that~$\lim\limits_{r \to 0} \ell(r)$ exists. Then:
    \begin{itemize}
        \item[(i)] If~$\lim\limits_{r \to 0} \ell(r) = \infty$, the operator is ultracontractive. Moreover, solutions belong to~$\cont^\infty(\RR^N)$ for any~$t>0$.
        \item[(ii)] If~$\lim\limits_{r \to 0} \ell(r) = c>0$, the operator is strongly $p$--hypercontractive for every $p>1$, and also eventually ultracontractive. Solutions improve in differentiability right after getting bounded.
        \item[(iii)] If~$\lim\limits_{r \to 0} \ell(r) = 0$, the operator is not eventually ultracontractive for any $p>1$. In particular, the fundamental solution does not belong to~$L^q$ for any~$q>1$. 
    \end{itemize}
\end{Theorem}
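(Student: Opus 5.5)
The plan is to treat the three regimes separately, working throughout with the Lévy–Khintchine multiplier $m$ from \eqref{eq:Levy.Khintchine}. The tools are the two-sided estimates \eqref{eq:multiplierupper} and \eqref{eq:lowerboundregularlyvarying} in terms of $\psi_1^\ell,\psi_2^\ell$, together with the Fourier criteria of \Cref{re:fourierimprovement}, \Cref{re:translationequivalence} and \Cref{re:translationequivalence2}. The radial form $\widetilde J(z)=\ell(|z|)|z|^{-N}$ near the origin lets me compare $m$ with $\log|\xi|$.

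For (i), if $\ell(r)\to\infty$ then for every $M>0$ we have $\ell\ge M$ on some $(0,r_M)$. Repeating the computation preceding \Cref{re:translationeventualultra} on $B_{r_M}$, I expect
\[
m(\xi)\ge M\omega_{N-1}\log|\xi|-C_M \quad\text{for }|\xi|\gg1,
\]
hence $m(\xi)/\log|\xi|\to\infty$. Then for every $t>0$ and $k\ge0$ the function $|\xi|^k e^{-tm(\xi)}$ is integrable, since it is eventually bounded by $|\xi|^{-N-1}$. It follows that $\mathcal H_t\in \cont^\infty$ with all derivatives bounded, so $\|\mathcal H_t\|_\infty<\infty$ for all $t>0$. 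Young's inequality then gives ultracontractivity, and $\partial^\alpha u(t)=\partial^\alpha\mathcal H_t*u_0$ gives smoothness; for $L^2$ data I would instead apply Plancherel to $|\xi|^k e^{-tm}\widehat u_0$.

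For (ii), if $\ell\to c>0$, then $\ell\ge c/2$ near $0$, so $\widetilde J\ge \nu\widetilde J_{\log}$ on a small ball, using the asymptotics of $J_{\log}$ at the origin. Two further ingredients are needed. First, \Cref{re:strongzero}, after rescaling time and shrinking the ball (only the truncated energy matters, and the tail is a bounded perturbation, as in its proof), gives strong $p$--hypercontractivity for all $p>1$. Second, \Cref{re:translationeventualultra} and \Cref{re:commutationtheorem} give $u(t)\in L^p_{2\nu t}$ and hence boundedness for large $t$. Differentiability improves after boundedness because, with the bound $m(\xi)\ge c'\log|\xi|-C$, the function $|\xi|^k e^{-tm}$ is integrable once $t>(N+k)/c'$; alternatively one uses the Bessel-space membership together with Morrey embeddings, as in \Cref{rk:higher.regularity}.

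For (iii), I argue by contradiction. By \Cref{re:translationequivalence} and \Cref{re:translationequivalence2}, eventual ultracontractivity for some $p>1$ is equivalent to $\mathcal H_{t_0}\in L^q$ for some $q>1$ and $t_0>0$. Interpolating with $L^1$, we would get $\mathcal H_{t_0}\in L^2$, so $e^{-t_0m}\in L^2$. The goal is therefore to show $e^{-t m}\notin L^2$ for every $t$, which follows once $m(\xi)=o(\log|\xi|)$. By \eqref{eq:multiplierupper} it suffices to show $\psi_1^\ell(1/|\xi|)=o(\log|\xi|)$ and $\psi_2^\ell(1/|\xi|)=o(\log|\xi|)$. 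Given $\varepsilon>0$, we have $\ell\le\varepsilon$ on $(0,r_\varepsilon)$, so
\[
\psi_1^\ell(r)\le \varepsilon\log(1/r)+C_\varepsilon
\]
and $\psi_2^\ell(r)\le\varepsilon/2$; here I would check that \eqref{eq:multiplierupper} holds for nonincreasing or general $\ell$, and if needed re-derive it directly by splitting the integral at $|z|=1/|\xi|$. Then $e^{-2tm(\xi)}\ge |\xi|^{-2t\varepsilon c_1}e^{-C}$, which is not integrable once $2t\varepsilon c_1<N$. Since $\varepsilon$ is arbitrary, $e^{-tm}\notin L^2$ for every $t$. To conclude that $\mathcal H_t\notin L^q$ for any $q>1$, I would use \Cref{re:translationequivalence} directly: membership at a single time would give eventual ultracontractivity, which we have just excluded.

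The main obstacle is the Fourier bookkeeping in (i) and (iii). I need the upper bound \eqref{eq:multiplierupper} without any regular-variation assumption, and a lower bound in (i) that works on a shrinking ball. Both reduce to elementary splittings of $\int(1-\cos(z\cdot\xi))\ell(|z|)|z|^{-N}\,dz$ at $|z|=1/|\xi|$, which I would carry out explicitly rather than rely on the cited regular-variation results.
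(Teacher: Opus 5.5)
Your proposal is correct up to one easily repaired slip. In parts (ii) and (iii) it follows the paper. Part (ii) is the same comparison with $\log(I-\Delta)$ through \Cref{re:strongzero}, \Cref{re:translationeventualultra} and \Cref{re:commutationtheorem}; your remark that the unit ball may be shrunk, with the tail treated as a bounded perturbation, is something the paper leaves implicit. Part (iii) is the same argument via \eqref{eq:multiplierupper}, $m(\xi)=o(\log|\xi|)$, Hausdorff--Young and \Cref{re:translationequivalence2}.

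In part (i) you take a more direct route. You verify the Hartman--Wintner growth $m(\xi)/\log|\xi|\to\infty$ yourself, bounding $m$ from below on $B_{r_M}$ with the explicit constant $M\omega_{N-1}$. You then invert the Fourier transform of $|\xi|^k e^{-tm}$ directly, instead of invoking \eqref{eq:lowerboundregularlyvarying} and the external results on ultracontractivity and on smoothness of the heat kernel. This is more self-contained. It also makes explicit the uniformity in the level of the constant minorant, which the paper's displayed chain takes for granted: that chain is written with $\psi_1^\ell$, whereas \eqref{eq:lowerboundregularlyvarying} only yields $\psi_1^\beta$.

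The slip is in (iii). Interpolating with $L^1$ can only lower an integrability exponent. From $\mathcal H_{t_0}\in L^q$ with $1<q<2$ you therefore cannot deduce $\mathcal H_{t_0}\in L^2$, and this is exactly the case \Cref{re:translationequivalence2} produces when the ultracontractivity exponent is $p>2$.

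The fix is short. For $q\in(1,2]$, apply Hausdorff--Young directly to get $e^{-t_0m}\in L^{q'}$ with $q'<\infty$. Since $\|e^{-sm}\|_2^2=\|e^{-t_0m}\|_{q'}^{q'}$ for $s=q't_0/2$, this says $e^{-sm}\in L^2$. Your estimate $m=o(\log|\xi|)$ rules that out for every $s>0$. Interpolation with $L^1$ is needed only when $q>2$, which is precisely how the paper separates the two ranges.
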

\begin{proof}
    (i) Ultracontractivity in this case was established in~\cite[Corollary 4.5]{ArturoCristinaRegularizingEffect}. As noticed there, the condition $\lim_{r\to 0}\ell(r)=\infty$ implies that $\ell$ is bounded from below, for $r$ sufficiently small, by a positive constant (which is a regularly varying function). Consequently, the estimate~\eqref{eq:lowerboundregularlyvarying} applies, and we have
    \begin{equation}
        \lim_{|\xi|\to \infty} \frac{m(\xi)}{\log(|\xi|)} \geq c_3 \lim_{r\to 0}\frac{\psi_1^\ell(r)}{\log(1/r)} = -c_3 \lim_{r \to 0} r (\psi_1^\ell)'(r) = c_3 \lim_{r \to 0} \ell(r) = \infty.
    \end{equation}
    Thus,  the so-called Hartman-Wintner condition,
    \[
        \lim_{|\xi|\to \infty} \frac{m(\xi)}{\log(|\xi|)} = \infty,   
    \]
    holds,  which es enough to guarantee that for any $t>0$ the fundamental solution  is radial, nonincreasing in $r=|x|$ and bounded, and belongs to~$C^\infty(\RR^N)$;  see for instance~\cite{knopova_note_2013}. 
    
    \noindent (ii) Strong hypercontractivity follows in this case from~\Cref{re:strongzero} and~\Cref{re:translationeventualultra}. \Cref{re:commutationtheorem} also gives the strong hypercontractivity, as well as the optimal condition after which solutions get bounded and the differentiability improvement.
    
    \noindent (iii) If~$\lim\limits_{r \to 0} \ell(r) = 0$, it is easy to see that
    \begin{equation}
        \lim_{r \to 0} \frac{\psi_1^\ell(r) }{\log(r)} = \lim_{r \to 0} \frac{1}{\log(r)}\int^1_{r} \frac{\ell(s)}{s} \, ds = -\lim_{r \to 0}  \ell(r) = 0.
    \end{equation}
    Hence, for any~$\varepsilon>0$ we get~$\psi_1^\ell\left( \frac{1}{r} \right) \leq \varepsilon \log(r)$ for~$r$ big enough depending on~$\varepsilon$. Then, using~\eqref{eq:multiplierupper},
    \begin{equation}
        e^{-t m(\xi)} \geq e^{-c_2 t}e^{-c_1 \varepsilon t \log(|\xi|)} = \frac{e^{-c_2 t}}{|\xi|^{c_1 \varepsilon t}}\quad\text{for } |\xi| \text{ big enough.}
    \end{equation}
    Since~$\varepsilon$ is arbitrary,~$e^{-tm}$ cannot belong to any~$L^p$, with~$p \in [1,\infty)$ for any~$t>0$. Thanks to Hausdorff-Young's inequality, for~$p\geq 2$,
    \begin{equation}
        \|e^{-t m}\|_p \leq \|\mathcal{H}_t\|_{p'}.
    \end{equation}
    Thus,~$\mathcal{H}_t$ does not belong to any~$L^q$, with~$q \in (1,2]$, for any~$t>0$. If~$\mathcal{H}_t$ enters some space~$L^q$ with~$q>2$ for some big enough time, it also belongs to~$L^1$ from that time onward. Then, by interpolation, we could deduce that it also belonged to all~$L^q$ spaces with~$q \in (1,2]$. Hence,~$\mathcal{H}_t$ does not get into any~$L^q$ for any~$q>1$. Assume that $\mathcal{L}$ was eventually ultracontractive for some $p>1$. Then, due to~\Cref{re:translationequivalence2}, we find a contradiction.  
\end{proof}

There is a small subtlety in Theorem~\ref{re:summaryresult}(iii): we show that the fundamental solution does not gain integrability, but we cannot a priori rule out this phenomenon for other solutions with $L^p$ initial data. To this end, we require an analogue of~\Cref{re:translationequivalence2} valid in $L^r$, for arbitrary $r$, rather than only in $L^\infty$. In other words, we need to ``invert'' weak Young's inequality.

\begin{Lemma}[Optimality of weak Young's inequality] \label{re:optimalweak}
    Let $p, r \in \RR$ such that $1 \leq p < r <\infty$, and let $G \geq 0$ be a measurable function satisfying 
    \begin{equation} \label{eq:weakyoungstart}
        \|G*f\|_r \leq C \|f\|_p
    \end{equation}
    for $C>0$ and any $f \in L^p(\RR^N)$. For $\lambda \geq 0$, define the sets
    \begin{equation}
        E_\lambda = \{x \in \RR^N : G(x) > \lambda \}.
    \end{equation}
    Assume for any $\lambda \geq 0$ that 
    \begin{equation} \label{eq:sumhypothesis}
        \mu(E_\lambda + E_\lambda) \leq c \mu(E_\lambda)
    \end{equation}
    for some $c>0$ independent of $\lambda$. Then $G \in L^{q, \infty}(\RR^N)$ for $q$ satisfying
    \begin{equation}
        1 + \frac1r = \frac1p + \frac1q.
    \end{equation}
\end{Lemma}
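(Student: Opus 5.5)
The plan is to test \eqref{eq:weakyoungstart} against indicator functions of the level sets themselves. Fix $\lambda>0$ with $0<\mu(E_\lambda)<\infty$; the cases $\mu(E_\lambda)=0$ are trivial, and $\mu(E_\lambda)=\infty$ will be excluded along the way by testing with indicators of large finite-measure subsets. Take $f=\chi_{E_\lambda}$, so that $\|f\|_p=\mu(E_\lambda)^{1/p}$. The key pointwise lower bound comes from $G\ge0$ and $G>\lambda$ on $E_\lambda$:
\begin{equation}
    (G*\chi_{E_\lambda})(x)=\int_{E_\lambda} G(x-y)\,dy \geq \lambda\,\mu\big(E_\lambda\cap(x-E_\lambda)\big).
\end{equation}
The difficulty is that this overlap can be small for most $x$. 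So instead of bounding it pointwise, I will control it in average and then use the doubling hypothesis \eqref{eq:sumhypothesis} to localize.

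The function $x\mapsto \mu(E_\lambda\cap(x-E_\lambda))$ equals $\chi_{E_\lambda}*\chi_{E_\lambda}(x)$. It is supported in $E_\lambda+E_\lambda$ and has total integral $\mu(E_\lambda)^2$. By Hölder on the support,
\begin{equation}
    \mu(E_\lambda)^2=\int_{E_\lambda+E_\lambda}\chi_{E_\lambda}*\chi_{E_\lambda}\le \mu(E_\lambda+E_\lambda)^{1-1/r}\,\|\chi_{E_\lambda}*\chi_{E_\lambda}\|_r \le \big(c\,\mu(E_\lambda)\big)^{1-1/r}\lambda^{-1}\|G*\chi_{E_\lambda}\|_r.
\end{equation}
Applying \eqref{eq:weakyoungstart} then gives
\begin{equation}
    \lambda\,\mu(E_\lambda)^{1+1/r}\le c^{1-1/r}\,C\,\mu(E_\lambda)^{1/p}.
\end{equation}
That is, $\lambda\,\mu(E_\lambda)^{1+1/r-1/p}\le C'$. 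Since $1+\frac1r-\frac1p=\frac1q$, this reads $\lambda\,\mu(E_\lambda)^{1/q}\le C'$ uniformly in $\lambda$, which is exactly $\|G\|_{q,\infty}\le C'$. Note that $q>1$ automatically, because $p<r$.

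The main obstacle is the infinite-measure case and the case $\lambda=0$. If $\mu(E_\lambda)=\infty$ for some $\lambda>0$, I will take $F\subset E_\lambda$ of finite measure $M$ and rerun the argument with $\chi_F$. The pointwise bound becomes $G*\chi_F\ge\lambda\,\chi_F*\chi_F$, and the support is contained in $F+F\subset E_\lambda+E_\lambda$. However, \eqref{eq:sumhypothesis} then gives no finite bound, so the averaging step needs a doubling estimate for $F+F$ itself.

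To get it, I will first try to choose $F$ as $E_\lambda\cap B_R$ and see whether the hypothesis effectively forces $E_\lambda$ to be of finite measure. Alternatively, I will argue via the previous bound for larger $\lambda'$ and monotone limits: the estimate $\lambda'\mu(E_{\lambda'})^{1/q}\le C'$ holds for every $\lambda'$ with $E_{\lambda'}$ of finite measure. Using that $\lambda\mapsto\mu(E_\lambda)$ is right-continuous, and that $\mu(E_\lambda)<\infty$ at some level, this can be pushed down to all $\lambda>0$. I expect that step to require the most care. The $\lambda=0$ instance of the hypothesis plays no role in the weak-norm supremum.
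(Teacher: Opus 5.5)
Your argument for levels with $0<\mu(E_\lambda)<\infty$ is exactly the paper's proof. You test \eqref{eq:weakyoungstart} with $\chi_{E_\lambda}$, use $G*\chi_{E_\lambda}\ge\lambda\,\chi_{E_\lambda}*\chi_{E_\lambda}$, apply H\"older on $E_\lambda+E_\lambda$, and then use \eqref{eq:sumhypothesis}. The paper stops there and silently uses $\mu(E_\lambda)<\infty$, both to have $\chi_{E_\lambda}\in L^p$ and to divide by $\mu(E_\lambda)$.

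Your last paragraph, which tries to remove this assumption, does not work. Right-continuity only transfers bounds to a level from above. With $\lambda^*=\inf\{\lambda>0:\mu(E_\lambda)<\infty\}$ you get $\mu(E_{\lambda^*})\le (C'/\lambda^*)^q$, but nothing for $\lambda<\lambda^*$. A jump of $\mu(E_\lambda)$ to $+\infty$ there is perfectly compatible with right-continuity. Nor does \eqref{eq:sumhypothesis} force finiteness, or control $\mu(F+F)$ for $F=E_\lambda\cap B_R$.

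In fact, for $p>1$ no argument can close this case: the statement is false when \eqref{eq:sumhypothesis} is read in $[0,\infty]$. Here is a counterexample.

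\emph{Construction.} Let $1<p<r<\infty$ and $G=\sum_{k\ge1}\chi_{B(Lke_1,\rho_k)}$ with $|B(0,\rho_k)|=\kappa/k$. Take $\kappa\le|B_1|$ and $L\ge3$ an integer, so the balls are disjoint with radii at most $1$. Then $E_\lambda$ is their union, of infinite measure, for $0\le\lambda<1$, and is empty for $\lambda\ge1$. So \eqref{eq:sumhypothesis} holds (as $\infty\le c\cdot\infty$), while $G\notin L^{q,\infty}$.

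\emph{Boundedness.} Nevertheless $G*$ maps $L^p$ to $L^r$. Split $f=\sum_m f\chi_{Q_m}$ over unit cubes $Q_m$, $m\in\mathbb Z^N$, and set $a_m=\|f\|_{L^p(Q_m)}$. Young's inequality gives $\|\chi_{B(0,\rho_k)}*(|f|\chi_{Q_m})\|_r\le(\kappa/k)^{1/q}a_m$, and this function is supported in $Q_m+B_1$. Set $c_n=\|h\|_{L^{r'}(Q_n+B_1)}$, so that $\|c\|_{\ell^{r'}}\lesssim\|h\|_{r'}$. Let $\gamma(-Lke_1)=k^{-1/q}$ and $\gamma=0$ elsewhere on $\mathbb Z^N$. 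Then for $h\in L^{r'}$ one gets $\int (G*|f|)\,|h|\le\kappa^{1/q}\sum_m a_m(\gamma*c)(m)$. Since $\gamma\in\ell^{q,\infty}(\mathbb Z^N)$, $1+\frac1{p'}=\frac1q+\frac1{r'}$ and $1<r'<p'<\infty$, the discrete weak Young inequality gives $\|\gamma*c\|_{\ell^{p'}}\lesssim\|c\|_{\ell^{r'}}$. Hence $\|G*f\|_r\lesssim\|f\|_p$.

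So the right move is to state the extra hypothesis $\mu(E_\lambda)<\infty$ for all $\lambda>0$, not to try to derive it. For $p=1$ this hypothesis is automatic, since \eqref{eq:weakyoungstart} tested against an approximate identity forces $G\in L^r$. With it, your proof is complete and identical to the paper's. It is harmless in the paper's application: there $G=\mathcal H_t\in L^1$, so $\mu(E_\lambda)\le\|\mathcal H_t\|_1/\lambda$.
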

\begin{proof}
    Let $f(x) = \chi_{E_\lambda}(x)$. From~\eqref{eq:weakyoungstart}, we get that
    \begin{equation} \label{eq:startingweakproof}
        \|G*\chi_{E_\lambda}\|_r \leq C \|\chi_{E_\lambda}\|_p = C \mu(E_\lambda)^{1/p}.
    \end{equation}
    On the other hand, 
    \begin{equation}
        G*\chi_{E_\lambda}(x)\geq \int_{E_\lambda} G(y)\chi_{E_\lambda}(x-y)\, dy 
        \geq \lambda  \big(\chi_{E_\lambda}*\chi_{E_\lambda}\big)(x).
    \end{equation}
    Now, notice that 
    \begin{equation}
        \mu^2(E_\lambda) = \|\chi_{E_\lambda}*\chi_{E_\lambda}\|_1 = \int_{E_\lambda + E_\lambda} \chi_{E_\lambda}*\chi_{E_\lambda} \leq \|\chi_{E_\lambda}*\chi_{E_\lambda}\|_r \, \mu(E_\lambda + E_\lambda)^{1-\frac1r}.
    \end{equation}
    Hence, using~\eqref{eq:sumhypothesis},
    \begin{equation}
        \|G*\chi_{E_\lambda}(x)\|_r \geq \lambda\, \frac{\mu^2(E_\lambda)}{\mu(E_\lambda + E_\lambda)^{1-\frac1r}}\geq \widetilde{c} \lambda \mu(E_\lambda)^{1+\frac1r}.
    \end{equation}
    This, together with~\eqref{eq:startingweakproof} implies that $\lambda \mu(E_\lambda)^{1/q}$ is bounded independently of $\lambda\geq0$, and thus we conclude the result. 
\end{proof}

Hypothesis \eqref{eq:sumhypothesis} is trivially satisfied if $E_\lambda$ is a convex set. In particular, if $\mathcal{H}_t$ is radial and nonincreasing in $r=|x|$ for each $t>0$, then the sets $\{x \in \RR^N: H_t(x) > \lambda\}$ are balls and thus satisfy the condition. In order to ensure that~$\mathcal{H}_t$ satisfies these requirements, we introduce the following definition, which allows us to state a sufficient condition.

\begin{Definition}
    A measure $\mu(dx)$ on $\RR^N$ is \emph{isotropic} if there exists a function $f(r)$ $(0<r<\infty)$ such that $\mu(dx) = f(|x|)\, dx$ for $x\neq 0$. A measure $\mu(dx)$ on $\RR^N$ is \emph{isotropic unimodal} if $f(r)$ is nonincreasing ($\mu(\{0\}$ may be positive).
\end{Definition}

It turns out that the fundamental solution~$\mathcal{H}_t$ is isotropic unimodal if and only if the kernel $\widetilde{J}(z)$ is isotropic unimodal; see~\cite{watanabe_unimodal_1983}.  Furthermore,~$\widetilde{J}(z)$ is integrable if and only if~$\int_{\{0\}}\mathcal{H}_t\, dx > 0$ for $t>0$. With this in mind, we conclude the result below.
\begin{Corollary}
    Let the kernel~$\widetilde{J}$ defining the operator~$\mathcal{L}$ be radial and nonincreasing in $r=|x|$, and assume that $\lim_{r \to \infty}\ell(r) = 0$. Then, 
    \begin{enumerate}
        \item[(i)] If the kernel is integrable, solutions are only as regular as the initial data. 
        \item[(ii)] If the kernel is not integrable, then solutions do not improve in integrability.
    \end{enumerate}
    In any case, the operator is not $p$--hypercontractive for any $p \geq 1$.
\end{Corollary}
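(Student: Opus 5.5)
The plan is to first read the hypothesis as $\lim_{r\to0}\ell(r)=0$; this is presumably a typo for $r\to\infty$, and it matches \Cref{re:summaryresult}(iii). Then I split according to whether $\widetilde J$ is integrable. In both cases I start from the fact quoted just before the statement: since $\widetilde J$ is radial and nonincreasing, it is isotropic unimodal. By~\cite{watanabe_unimodal_1983}, so is $\mathcal{H}_t$ for every $t>0$. Moreover, $\mathcal{H}_t$ has an atom at the origin exactly when $\widetilde J$ is integrable.

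\emph{Case (i), integrable kernel.} Here $\mathcal{H}_t=e^{-t\|\widetilde J\|_1}\delta_0+v(t)$ with $v(t)\ge0$, as recalled in the introduction from~\cite{ChasseigneChavesRossi2007}. Take any $u_0\ge0$ with $u_0\in L^p(\RR^N)\setminus L^q(\RR^N)$ for some $q>p$; such data exist by a local singularity. Positivity gives the pointwise bound $u(t)\ge e^{-t\|\widetilde J\|_1}u_0$, so $u(t)\notin L^q$ for every $t>0$. The same bound works for any space that $u_0$ fails to belong to, which is the precise sense of ``only as regular as the initial data''. In particular, \eqref{def:hypercontractive} fails for any $p\ge1$: since $q(t)\to\infty$ is nondecreasing, there is some $t$ with $q(t)>p$, and at that time we get a contradiction.

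\emph{Case (ii), non-integrable kernel.} Now $\mathcal{H}_t$ has no atom. It is therefore a function, radial and nonincreasing in $|x|$, with $\|\mathcal{H}_t\|_1=1$. Its superlevel sets $E_\lambda$ are balls (open or closed), so $\mu(E_\lambda+E_\lambda)=2^N\mu(E_\lambda)$ and hypothesis~\eqref{eq:sumhypothesis} of \Cref{re:optimalweak} holds. Suppose that for some $u_0$-independent constant we had $\|\mathcal{H}_t*f\|_r\le C\|f\|_p$ with $1\le p<r<\infty$ and some $t>0$. This is what $p$--hypercontractivity, or any gain of integrability uniform in the data, provides at a time where $q(t)>p$; if $q(t)=\infty$, first replace it by a large finite exponent using interpolation with the $L^p$ contraction. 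Then \Cref{re:optimalweak} yields $\mathcal{H}_t\in L^{s,\infty}$ with $1+\frac1r=\frac1p+\frac1s$, hence $s>1$. Interpolating this with $\mathcal{H}_t\in L^1$ (Marcinkiewicz, or directly via the distribution function) gives $\mathcal{H}_t\in L^{\sigma}$ for every $\sigma\in(1,s)$. This contradicts \Cref{re:summaryresult}(iii), whose hypotheses hold because $\widetilde J=\ell(|z|)|z|^{-N}$ near $0$ with $\ell(0^+)=0$, and whose conclusion is that $\mathcal{H}_t\notin L^\sigma$ for all $\sigma>1$ and all $t>0$.

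The main obstacle is the passage from ``solutions do not improve'' in case (ii) to a uniform operator bound that \Cref{re:optimalweak} can consume. If the claim is only that some individual solution fails to improve, I would argue by a closed graph/Baire argument. If $\mathcal{H}_t*f\in L^r$ for every $f\in L^p$, then the map $f\mapsto\mathcal{H}_t*f$ is closed from $L^p$ to $L^r$, since it is continuous into $L^p$ and limits agree a.e. along subsequences. Hence it is bounded, and the argument above applies. For the $p=1$ endpoint, \Cref{re:optimalweak} already allows $p\ge1$, so no separate duality step is needed. Combining the two cases gives the final assertion that $\mathcal{L}$ is not $p$--hypercontractive for any $p\ge1$.
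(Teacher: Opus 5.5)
Your proposal is correct and follows the paper's route. For (i) you use the decomposition $\mathcal{H}_t=e^{-t\|\widetilde J\|_1}\delta_0+v(t)$ from \cite{ChasseigneChavesRossi2007}. For (ii) you use unimodality without an atom via \cite{watanabe_unimodal_1983}, then \Cref{re:optimalweak}, then a contradiction with \Cref{re:summaryresult}(iii). That theorem's hypothesis also confirms your reading $\ell(0^+)=0$. Your ball-sumset check, the weak-$L^s$/$L^1$ interpolation and the closed-graph step make explicit what the paper leaves implicit.

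The only overstatement is in (i). The bound $u(t)\ge e^{-t\|\widetilde J\|_1}u_0$ rules out gains only in lattice-type spaces such as $L^q$, which is all the non-hypercontractivity claim needs. For differentiability you would instead use $u_0=e^{t\|\widetilde J\|_1}e^{-t\widetilde J*}u(t)$.
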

\begin{proof}
    (i) If the kernel is integrable, the solution to~\eqref{problem-L} is given by~$u(t) = e^{-t} u_0 + v(t)*u_0$ for some function~$v$ (the regular part of the heat kernel); see for instance~\cite{ChasseigneChavesRossi2007}. Therefore,~$u(t)$ is only as good as the initial datum is.

     \noindent (ii) If the kernel is not integrable, then we can guarantee that the fundamental solution~$\mathcal{H}_t$ is radial and nonincreasing in $|x|$ for all $t>0$, since it has no Dirac delta at the origin. Thus, if solutions starting from initial data in~$L^p$ with~$p \geq 1$ improved in integrability,~\Cref{re:optimalweak} would force~$\mathcal{H}_t$ to belong to~$L^q(\RR^N)$ for some $q>1$. However, this contradicts~\Cref{re:summaryresult}.
\end{proof}
Hence, not only is $\widetilde{J}(z) \sim |z|^{-N}$ for $|z|\ll 1$ the threshold for eventual ultracontractivity, but also for any kind of hypercontractivity. Observe that this previous result even works for~${p=1}$, which is the most problematic case throughout the paper.

\section{Comments and possible extensions} \label{se:extensions}

Let us highlight what we regard as the most significant open question raised by this work: Are strong hypercontractivity and eventual ultracontractivity equivalent? Alternatively, can we find operators of the form~\eqref{general-L} whose solutions to~\eqref{problem-L} enter every space~$L^q$ for~$q \in [1,\infty)$ in finite time, yet never reach~$L^\infty$? According to our results, if such operators exist, we do not expect them to be translation-invariant. The study of how regular solutions can become with only ellipticity assumptions in the spirit of~\Cref{re:strongzero} and~\Cref{re:notsuper}, would correspond, in this setting, to the parabolic De Giorgi--Nash--Moser theory for local divergence-form operators. However, without additional regularity hypotheses on the kernel, at most $L^\infty$ is expected in our nonlocal framework, and even this may require some additional assumptions.

It is well established in the literature, under mild conditions, that the standard instantaneous ultracontractivity and Nash-type inequalities are equivalent; see~\cite{CKSNash,TomisakiNash,GrigorNash,CoulhonNash}. As shown in~\cite{TomisakiNash,CoulhonNash}, a Nash-type inequality of the form
\begin{equation}
    \varTheta(\|f\|_2) \leq \overline{\mathcal{E}}(f) \quad \text{ for } \|f\|_1 \leq 1
\end{equation}
implies ultracontractivity whenever
\begin{equation} \label{eq:nashintegralcond}
    \int^\infty \frac{1}{\varTheta(s)}\,ds < \infty.
\end{equation}

As noted in~\Cref{subse:precedents}, it was shown in~\cite{GentilMaheuxNash} that the operator~$\log(I-\Delta)$ satisfies a Nash-type inequality of the form
\begin{equation} \label{eq:lognash}
    A_\rho \|f\|_2^2 \log\left(1 + B_{\rho,N} \|f\|_2^{4/N}\right) \leq \overline{\mathcal{E}}_{\log}(f), \quad \|f\|_1 \leq 1,
\end{equation}
where~$A_\rho,B_{\rho,N}>0$ depend on a constant~$\rho>1$ and the dimension~$N$.
Observe that the left-hand side of~\eqref{eq:lognash} does not satisfy the integral condition~\eqref{eq:nashintegralcond}, as expected. To the best of our knowledge, no connection has been established in the literature between eventual ultracontractivity and Nash inequalities of the form~\eqref{eq:lognash}. It remains open whether such an equivalence exists, and whether it is related to our approach based on a family of~$p$--Nash inequalities.

Another question, of a more technical nature, is whether there exists a method to prove the~$p$--logarithmic Sobolev inequalities~\eqref{eq:log-sob-ineq} without relying on hypercontractivity. Such an approach would be particularly valuable when working on bounded domains, where the Fourier transform---heavily used in the current argument---is not available. 

Comparing the Dirichlet problem with its counterpart posed on the whole space shows that the  solution semigroup for the equation $\partial_t u+\mathcal{L}u=0$  with zero exterior data is strongly hypercontractive when the kernel $J$ defining the operator satisfies $J(x,y)\sim |x-y|^{-N}$ for $x\sim y$. However, the presence of zero exterior data introduces an additional diffusive effect that may enhance integrability more rapidly than in the whole-space case. In principle, this phenomenon could lead to instantaneous smoothing, yielding supercontractivity or perhaps even ultracontractivity. Can such behaviour be ruled out? Is a genuinely gradual smoothing still compatible with the boundary effect? These questions deserve further investigation.

A final and rather intriguing observation emerges when the problem is analysed as a critical case of fractional Porous Media equations as
$$
    \partial_t u+(-\Delta)^{\sigma/2}u^m=0,
$$ 
where~$m = m_c = \frac{(N - \sigma)_+}{N}$ is the critical exponent. Instantaneous ultracontractivity (known as~$L^1$--$L^\infty$ smoothing effect in this setting) appears when~$m>m_c$, and there is lack of instantaneous regularization when~$m \leq m_c$. What makes this particularly noteworthy is that for operators of zero or $0^+$-order, such as~$\log(I - \Delta)$, we may think of~$\sigma$ as~$0$, and consequently~$m_c = 1$. In this case, the linear scenario becomes critical.

In the range~$m\le m_c$ there is still ultracontractivity if the initial datum belongs to~$L^p$ with~$p\ge p^*(m)=\frac{N}{\sigma}(1-m)$, but not in general below this critical value~\cite{dePablo-Quiros-Rodriguez-Vazquez-CPAM-2012,Boudaoud-dePablo-Quiros-Preprint-2026}, since~$p^*(m)=\infty$ if~$m\in(0,1)$. For solutions of Porous Media-type equations
\[
    \partial_t u + \mathcal{L} u^m = 0
\]
with more general kernels analogous ultracontractivity results are expected. These observations suggest that the equation
\[
    \partial_t u + \log(I - \Delta) u^m = 0
\]
deserves further investigation in both the slow diffusion regime ($m > 1$) and the fast diffusion regime ($m < 1$). For~$m > 1$, ultracontractivity is anticipated, see~\cite{BonforteEndalSmoothing}; whereas for~$m < 1$ no improvement in integrability is expected. We plan to address both regimes in future work, and it is foreseeable that the analysis for~$m < 1$ will present significantly greater challenges than for~$m > 1$.

\section*{Appendix: Existence of solutions}
\label{se:existence}

\setcounter{equation}{0} 
\renewcommand{\theequation}{A.\arabic{equation}}

We recall the classic Hille-Yosida theorem, see~\cite{Brezisbook}.

\begin{Theorem}[Hille-Yosida for self-adjoint operators]\label{re:HilleYosida}
    Let~$H$ be a Hilbert space, and let
    \[
        {\mathcal{L}: D(\mathcal{L}) \subset H \to H}
    \]
    be a self-adjoint, maximal monotone operator. Then, for every~$u_0 \in H$ there exists a unique function
    \begin{equation} \label{eq:functionalcurve}
        u \in \cont([0,\infty); H) \cap \cont((0,\infty); D(\mathcal{L})) \cap \cont^1((0,\infty); H)
    \end{equation}
    such that
    \begin{equation}\label{eq:functionalODE}
        \dfrac{du}{dt}+\mathcal{L}u =0\quad\text{in } (0,\infty), \qquad u(0) = u_0.
    \end{equation}
    Moreover, we have
    \begin{equation}
        \|u(t)\|_H \leq \|u_0\|_H, \quad  \left\| \frac{du}{dt} (t)\right\|_H = \|\mathcal{L} u(t)\|_H \leq \frac{1}{t} \|u_0\|_H\quad\textup{for all }t>0,
    \end{equation}
    \begin{equation}
        u \in \cont^k((0, \infty) ; D(\mathcal{L}^l))\quad\textup{for all } k, l \text{ integers}.
    \end{equation}
    Additionally, if~$u_0 \in D(\mathcal{L}^k)$, then
    \begin{equation}
        u \in \cont^{k-j}([0, \infty); D(\mathcal{L}^j))\quad\textup{for all } j= 0, 1, \hdots, k.
    \end{equation}
\end{Theorem}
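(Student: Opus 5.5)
The cleanest route is the spectral theorem, since $\mathcal{L}$ is self-adjoint and monotone and therefore has spectrum in $[0,\infty)$. Let $E(d\lambda)$ be the associated projection-valued measure, so that $\mathcal{L}=\int_{[0,\infty)}\lambda\,dE(\lambda)$ and $D(\mathcal{L}^l)=\{f\in H:\int\lambda^{2l}\,d\langle E(\lambda)f,f\rangle<\infty\}$. I would define the candidate solution by functional calculus,
\[
    u(t)=e^{-t\mathcal{L}}u_0=\int_{[0,\infty)}e^{-t\lambda}\,dE(\lambda)u_0 .
\]
Every property in the statement then reduces to scalar bounds on the functions $\lambda\mapsto\lambda^l e^{-t\lambda}$ on $[0,\infty)$.

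\textbf{Step 1 (continuity and contraction).} Since $0\le e^{-t\lambda}\le1$, we get $\|u(t)\|_H\le\|u_0\|_H$. Continuity on $[0,\infty)$ follows from dominated convergence applied to $\int|e^{-t\lambda}-e^{-s\lambda}|^2\,d\langle E(\lambda)u_0,u_0\rangle$.

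\textbf{Step 2 (smoothing and the estimate $1/t$).} For $t>0$ we have $\sup_{\lambda\ge0}\lambda^l e^{-t\lambda}=(l/(et))^l<\infty$, so $u(t)\in D(\mathcal{L}^l)$ for every $l$. For $l=1$ this gives $\|\mathcal{L}u(t)\|_H\le (et)^{-1}\|u_0\|_H\le t^{-1}\|u_0\|_H$. Next, the difference quotients $(e^{-(t+h)\lambda}-e^{-t\lambda})/h$ converge to $-\lambda e^{-t\lambda}$ and are dominated by $C\lambda e^{-t\lambda/2}$ for $|h|<t/2$. Hence $u\in\cont^1((0,\infty);H)$ with $u'=-\mathcal{L}u$, and in particular $\|u'(t)\|_H=\|\mathcal{L}u(t)\|_H$. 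Iterating with the weights $\lambda^{k+l}e^{-t\lambda/2}$ gives $u\in\cont^k((0,\infty);D(\mathcal{L}^l))$. If $u_0\in D(\mathcal{L}^k)$, the integrand $\lambda^j(-\lambda)^{i}e^{-t\lambda}$ with $i+j\le k$ is dominated by $\lambda^k$, which is square integrable against $d\langle Eu_0,u_0\rangle$ up to $t=0$. This yields $u\in\cont^{k-j}([0,\infty);D(\mathcal{L}^j))$.

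\textbf{Step 3 (uniqueness).} If $u_1,u_2$ are two solutions in the class \eqref{eq:functionalcurve}, their difference $w$ satisfies
\[
    \tfrac{d}{dt}\|w\|_H^2=-2\langle\mathcal{L}w,w\rangle\le0 \quad\text{on }(0,\infty).
\]
By continuity at $t=0$ and $w(0)=0$, this forces $w\equiv0$.

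The main obstacle is only technical: justifying the interchange of limits and the spectral integral in Step 2, that is, the dominated convergence arguments near $t=0$ and the uniform bounds on compact subsets of $(0,\infty)$. If one prefers to avoid the spectral theorem, the alternative is the Yosida approximation $\mathcal{L}_\mu=\mathcal{L}(I+\mu\mathcal{L})^{-1}$. One solves the equation with the bounded operator $\mathcal{L}_\mu$, shows that $u_\mu$ is Cauchy in $\cont([0,T];H)$ for $u_0\in D(\mathcal{L})$, and passes to all $u_0\in H$ by density and contraction. In that route, the self-adjoint smoothing estimate $\|\mathcal{L}u(t)\|\le t^{-1}\|u_0\|$ is obtained instead from the monotonicity of $t\mapsto\|\mathcal{L}u(t)\|$ and of $t\mapsto\langle\mathcal{L}u(t),u(t)\rangle$, combined with the energy identity; this would be the delicate step there. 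I would follow the spectral route, as in \cite{Brezisbook}.
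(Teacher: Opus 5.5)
Your proof is correct, but it takes a different route from the one behind the statement. The paper gives no proof of its own. It recalls the result from \cite{Brezisbook} (Theorem~7.7 there, together with the regularity result for $u_0\in D(\mathcal{L}^k)$). Brezis's proof is the Yosida-approximation argument you list as the alternative, not a spectral one, so your phrase ``the spectral route, as in \cite{Brezisbook}'' is a misattribution.

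In Brezis's route, existence and uniqueness for $u_0\in D(\mathcal{L})$ come from the bounded approximations $\mathcal{L}(I+\mu\mathcal{L})^{-1}$ and need only maximal monotonicity. Self-adjointness is used only for the smoothing, which is what makes general $u_0\in H$ work. For regular data, the identity $\frac{d}{dt}\langle\mathcal{L}u,u\rangle=-2\|u'\|^2$, the energy identity and the fact that $t\mapsto\|u'(t)\|$ is nonincreasing give $\frac{T^2}{2}\|u'(T)\|^2\le\int_0^T t\|u'\|^2\,dt\le\frac14\|u_0\|^2$. One then passes to all $u_0\in H$ by density.

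Your functional-calculus argument is more economical. Every assertion reduces to a scalar bound on $\lambda^l e^{-t\lambda}$, general $u_0\in H$ needs no density step, and you get the sharper bounds $\|\mathcal{L}^l u(t)\|\le (l/(et))^l\|u_0\|$. The price is the spectral theorem for unbounded self-adjoint operators, with symmetry used already for existence. Brezis's route is elementary, works directly in real Hilbert spaces, and separates what depends only on monotonicity from what needs symmetry.

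Two small points to tidy up. First, the spectral theorem is usually stated on complex Hilbert spaces, while the paper works in real $L^2(\RR^N)$. Say that you complexify, and that $e^{-t\mathcal{L}}$, being a real function of $\mathcal{L}$, maps real elements to real elements. Second, in the last part of Step~2, $\lambda^{i+j}$ is not dominated by $\lambda^k$ on $(0,1)$ when $i+j<k$. Use $1+\lambda^k$ instead; it is still square integrable because $d\langle E(\lambda)u_0,u_0\rangle$ is a finite measure.
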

We can use this theorem to prove the existence of solutions to problem~\eqref{problem-L}, where~$H = L^2(\RR^N)$,~$D(\mathcal{L}) = H_\mathcal{L}(\RR^N)$, and the operator~$\mathcal{L}$ of type~\eqref{general-L} is understood via~$\langle \mathcal{L} f, g\rangle \coloneq \mathcal{E}(f,g)$ for~$f, g \in  H_\mathcal{L}(\RR^N)$. It is also worth mentioning that for this existence result to work, we do not require the kernel~$J$ to be a Lévy kernel, it is enough that it be nonnegative and symmetric.

\begin{Definition}
    We say that~$w$ is a \emph{supersolution} of~\eqref{problem-L} if~$w$ verifies~\eqref{eq:functionalcurve} and
    \begin{equation}
        \frac{dw}{dt}+\mathcal{L}w\geq 0\quad\textup{in }(0,\infty).
    \end{equation}
    It is a \emph{subsolution} if it satisfies the reverse inequality.
\end{Definition}

Observe that a solution is both a supersolution and a subsolution. Next we state a comparison result that is deduced from the standard \emph{Stampacchia’s truncation method}, and thus we omit the proof. The contractivity property expressed in the forthcoming integral formula~\eqref{eq:Tcontraction}  is usually known as \emph{$T$--contractivity}. In particular, this result implies that nonnegative initial data produce a nonnegative solution. 

\begin{Proposition}[Maximum principle]
    Let~$w,v$ be a supersolution and a subsolution of problem~\eqref{problem-L}, with initial data~$w_0$ and~$v_0$, respectively. Then
    \begin{equation} \label{eq:Tcontraction}
        \int_{\RR^N} (v(t)-w(t))_+ \leq \int_{\RR^N} (v_0 - w_0)_+\quad\textup{for all }t>0.
    \end{equation}
    Hence, if~$w_0 \geq v_0$, then~$w \geq v$.
\end{Proposition}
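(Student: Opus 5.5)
The plan is to apply Stampacchia's truncation to the difference $z = v - w$. Since $v$ is a subsolution and $w$ is a supersolution, $z$ satisfies, in the sense of~\eqref{eq:functionalcurve},
\[
\frac{dz}{dt} + \mathcal{L}z \le 0 \quad \text{in } (0,\infty), \qquad z(0) = v_0 - w_0 .
\]
Let $\varphi \in \cont^1(\RR)$ be nondecreasing with $\varphi(s) = 0$ for $s \le 0$. A natural choice is a smooth approximation of $\operatorname{sign}_+$, namely $\varphi_\delta(s) = \min\{s_+/\delta, 1\}$, regularized to be $\cont^1$. We test the inequality against $\varphi(z(t))$. This test function is admissible: for $t>0$ we have $z(t) \in D(\mathcal{L}) = H_\mathcal{L}(\RR^N)$, and $\varphi(z)\in H_\mathcal{L}$ because $\varphi$ is Lipschitz with $\varphi(0)=0$, by the argument of \Cref{re:betterfunction}. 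Since $\varphi \ge 0$, we obtain
\[
\int_{\RR^N} \partial_t z\, \varphi(z) + \mathcal{E}\big(z, \varphi(z)\big) \le 0 .
\]

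The key observation is that $\mathcal{E}(z,\varphi(z)) \ge 0$. This holds because $\varphi$ is nondecreasing, so $(z(x)-z(y))(\varphi(z(x))-\varphi(z(y))) \ge 0$ pointwise, and $J \ge 0$ is symmetric. Consequently $\int \partial_t z\,\varphi(z) \le 0$. Writing $\Phi_\delta(s) = \int_0^s \varphi_\delta$, this says $\frac{d}{dt}\int \Phi_\delta(z(t)) \le 0$. The chain rule is justified because $z \in \cont^1((0,\infty);L^2)$ and $\Phi_\delta$ is $\cont^1$ with Lipschitz derivative. Integrating from $s>0$ to $t$ and letting $s\to 0$, using $z \in \cont([0,\infty);L^2)$, gives
\[
\int \Phi_\delta(z(t)) \le \int \Phi_\delta(z(0)).
\]

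We then let $\delta \to 0$. Since $\Phi_\delta(s) \uparrow s_+$ monotonically, monotone convergence yields~\eqref{eq:Tcontraction}. If the right-hand side is infinite there is nothing to prove, because the $L^2$ data need not be integrable. The comparison statement follows at once: if $w_0 \ge v_0$, the right-hand side vanishes, so $(v(t)-w(t))_+ = 0$ almost everywhere.

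The main obstacle is the justification step. One has to check that the energy identity $\langle \mathcal{L}z, \varphi(z)\rangle = \mathcal{E}(z,\varphi(z))$ makes sense with $\varphi(z)$ in the form domain. One also has to handle the passage $t\to 0$ when only $L^2$-continuity is available, and integrability of $\Phi_\delta(z)$ can fail when the data are not in $L^1$. A cleaner alternative for the last issue is to use the bound $\Phi_\delta(s) \le s_+^2/(2\delta)$, which is finite in $L^2$, so the $L^2$ continuity is enough at each fixed $\delta$.
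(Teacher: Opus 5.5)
Your proposal is correct and is the Stampacchia truncation argument the paper invokes; the paper states this maximum principle and omits the proof, citing that method. Testing the differential inequality for $z=v-w$ with a Lipschitz approximation $\varphi_\delta$ of $\operatorname{sign}_+$, using $\mathcal{E}(z,\varphi_\delta(z))\ge 0$, and handling $t\to 0$ through the bound $\Phi_\delta(s)\le s_+^2/(2\delta)$ at fixed $\delta$ fills in exactly those omitted details; for the limit $\delta\to 0$ you need not rely on monotonicity of the regularized $\Phi_\delta$, since $\Phi_\delta(s)\le s_+$ on the right and Fatou's lemma on the left suffice.
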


This maximum principle has the following consequences.
\begin{Corollary} \label{re:pboundedness}
    Let~$u$ be a solution to problem~\eqref{problem-L} with~$u_0 \in L^2(\RR^N)$. If, in addition,~$u_0 \in L^p(\RR^N)$ for some~$p \in [1,\infty]$, then~$u(t) \in L^p(\RR^N)$ and
    \begin{equation}
        \|u(t)\|_p \leq \|u_0\|_p\quad\textup{for all }t \geq 0.
    \end{equation}
    Furthermore, if~$p\geq 2$ and~$u_0 \in L^\infty(\RR^N)$, then~$u \in \cont([0,\infty); L^p(\RR^N))$; if~$1 \leq p\leq 2$ and~$u_0 \in L^1(\RR^N)$, then~$u \in \cont([0,\infty); L^p(\RR^N))$.
\end{Corollary}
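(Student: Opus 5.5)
The plan is to derive both assertions from the maximum principle (the $T$--contractivity estimate~\eqref{eq:Tcontraction}), together with the linearity and order-preservation of the solution map. Continuity in time will then follow by interpolation with the $L^2$-continuity built into the notion of mild solution.

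\emph{Case $p=1$.} Apply~\eqref{eq:Tcontraction} twice. First take $v=u$ and $w=0$, which is a solution and hence a supersolution. Then take $v=0$ and $w=u$. This gives $\int u(t)_+\le\int (u_0)_+$ and $\int u(t)_-\le\int (u_0)_-$. Adding the two yields $\|u(t)\|_1\le\|u_0\|_1$.

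\emph{Case $p=\infty$.} Set $M=\|u_0\|_\infty$. The constant function $M$ is not in $L^2$, so it cannot be used directly as a barrier. Instead, I would use~\eqref{eq:Tcontraction} in the form $\int (u(t)-M)_+\le\int(u_0-M)_+=0$. To justify this, approximate $M$ by $M\chi_{B_R}$ convolved with a mollifier, call the resulting solution $w_R$, and note that $\mathcal{L}$ annihilates constants at the level of the Dirichlet form. I would then pass to the limit $R\to\infty$ using $\mathcal{E}$ and the Lévy condition~\eqref{eq:conditions.Levy.kernel}. A cleaner alternative is Stampacchia truncation applied directly: test the equation with $(u-M)_+\in H_\mathcal{L}$. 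Since $\mathcal{E}(u,(u-M)_+)\ge\overline{\mathcal E}((u-M)_+)\ge0$, we get $\frac{d}{dt}\|(u-M)_+\|_2^2\le0$, so $u\le M$. Arguing symmetrically for $-u$ gives $\|u(t)\|_\infty\le\|u_0\|_\infty$.

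\emph{Case $1<p<\infty$.} The map $u_0\mapsto u(t)$ is linear and contracts both $L^1\cap L^2$ and $L^\infty\cap L^2$, so Riesz--Thorin gives the $L^p$ contraction on $L^p\cap L^2$. Alternatively, one can differentiate $\int|u|^p$ and use $\mathcal{E}(u,|u|^{p-2}u)\ge0$, which holds since $(a-b)(|a|^{p-2}a-|b|^{p-2}b)\ge0$.

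\emph{Continuity.} For $p\ge2$ with $u_0\in L^\infty$, use $\|u(t)-u(s)\|_p\le\|u(t)-u(s)\|_\infty^{1-2/p}\|u(t)-u(s)\|_2^{2/p}\le(2\|u_0\|_\infty)^{1-2/p}\|u(t)-u(s)\|_2^{2/p}$, together with $u\in\cont([0,\infty);L^2)$. For $1\le p\le2$ with $u_0\in L^1$, interpolation between $L^1$ and $L^2$ handles $p\in(1,2]$. The case $p=1$ is the main obstacle, since there is no interpolation partner below $L^1$. There I would argue by density: approximate $u_0$ in $L^1$ by $u_0^n\in L^1\cap L^2$ with compact support, for which $L^1$-continuity follows from $L^2$-continuity plus tightness. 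Tightness, i.e.\ uniform smallness of $\int_{|x|>R}|u^n(t)|$ on bounded time intervals, would be obtained by testing against a cutoff and using~\eqref{eq:conditions.Levy.kernel}. The $L^1$-contraction then makes the convergence $u^n\to u$ uniform in $t$, so the continuity passes to the limit.
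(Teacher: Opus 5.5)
Your bounds are correct, and for two of them you take a different, and in places more careful, route than the paper.

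For $p=\infty$, the paper declares the constants $\sup u_0$ and $\inf u_0$ to be a super- and a subsolution and applies \eqref{eq:Tcontraction}. Constants are not in $L^2$, so they are not admissible in the sense of the definition. Your test with $(u-M)_+$, based on $(a-b)(a_+-b_+)\ge(a_+-b_+)^2$, is the rigorous form of that argument.

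For $1<p<\infty$, the paper does not interpolate. It differentiates $\int|u|^p$ and uses $\mathcal{E}(u|u|^{p-2},u)\ge0$ when $p>2$. When $1<p<2$ it must replace $s|s|^{p-2}$ by an increasing $\cont^1$ function $\varphi$ with $\varphi'(0)<\infty$, because $|u|^{p-2}u$ need not belong to $H_\mathcal{L}$; your ``alternative'' would need the same regularization below $p=2$. Riesz--Thorin avoids these technicalities and needs only linearity and the two endpoint contractions, whereas the energy method does not rely on linearity. Your continuity argument for $p\ge2$ and for $1<p\le2$ is the same as the paper's.

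The step that does not work as sketched is the tightness for $p=1$. Testing with a cutoff $\eta_R$ gives $\int u(t)\eta_R-\int u_0\eta_R=-\int_0^t\mathcal{E}(u(s),\eta_R)\,ds$. To make this small uniformly in $t$ you need $|\mathcal{E}(u,\eta_R)|\le\varepsilon(R)\|u\|_1$, i.e.\ $\mathcal{L}\eta_R\in L^\infty$ with small norm. That holds for translation-invariant kernels. For a general symmetric kernel satisfying only \eqref{eq:conditions.Levy.kernel} and more singular than $|x-y|^{-N-1}$, however, the first-order term $\int_{|x-y|<1}(y-x)J(x,y)\,dy$ does not cancel. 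So $\mathcal{L}\eta_R$ need not be bounded, or even defined pointwise; the paper makes this remark when discussing very weak solutions. The other natural bound, $\overline{\mathcal{E}}(u)^{1/2}\,\overline{\mathcal{E}}(\eta_R)^{1/2}$, does not help either, since $\overline{\mathcal{E}}(\eta_R)$ typically grows with $R$.

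No kernel information is actually needed. Split $u_0=(u_0)_+-(u_0)_-$, so you may assume $u_0\ge0$ and hence $u(h)\ge0$. Since $u(h)\to u_0$ in $L^2(B_R)$,
\[
    \int_{|x|>R}u(h)\le\|u_0\|_1-\int_{B_R}u(h)\longrightarrow\int_{|x|>R}u_0\quad\text{as }h\to0^+.
\]
Hence $\limsup_{h\to0^+}\|u(h)-u_0\|_1\le2\int_{|x|>R}u_0$ for every $R$, which gives continuity at $t=0$. Continuity at $t_0>0$ then follows from the $L^1$ contraction together with
\[
u(t_0+h)-u(t_0)=e^{-t_0\mathcal{L}}(u(h)-u_0),\qquad u(t_0)-u(t_0-h)=e^{-(t_0-h)\mathcal{L}}(u(h)-u_0).
\]
This makes your density step unnecessary. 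It is also what the paper's own $p=1$ argument implicitly needs, since a uniform $L^1$ bound alone does not give uniformly small tails.
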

\begin{proof}
    We start with the case~$p=1$. The inequality~$\|u(t)\|_1 \leq \|u_0\|_1$ is deduced trivially from~\eqref{eq:Tcontraction}. Now, observe that
    \begin{equation}
        \|u(h)-u_0\|_1 = \int_{B_R} |u(h)-u_0| + \int_{\RR^N \setminus B_R} |u(h)-u_0|.
    \end{equation}
    Thanks to the previous inequality,~$|u(h)-u_0|$ is uniformly in~$L^1(\RR^N)$, and thus there exists~$R>0$ independent of~$h$ such that
    \begin{equation}
        \int_{\RR^N \setminus B_R} |u(h)-u_0| < \frac \varepsilon2.
    \end{equation}
    Now, for this fixed~$R$, choose~$h$ so that
    \begin{equation}
        \int_{B_R} |u(h)-u_0| \leq |B_R|^{1/2}\,\|u(h)-u_0\|_2 \leq \frac \varepsilon2.
    \end{equation}
    From this, we get that~$\lim\limits_{h\to 0^+} \|u(h)-u_0\|_1 = 0$ and using the previous bound we get the continuity.

    Now we deal with the case~$p= \infty$. Again by~\eqref{eq:Tcontraction}, observe that~$\inf_{\RR^N} u_0 \leq u_0 \leq \sup_{\RR^N}u_0$, and that~$\sup_{\RR^N}u_0$ is a supersolution, and~$\inf_{\RR^N} u_0$ a subsolution. Hence,
    \begin{equation}
        \inf_{\RR^N} u_0 \leq u(t) \leq \sup_{\RR^N}u_0\quad\textup{for all }t \geq 0.
    \end{equation}
    
    Next, assume that~$2 < p < \infty$. Multiply the equation by~$u|u|^{p-2}$, that belongs to~$H_\mathcal{L}(\RR^N)$ thanks to~$p\geq2$, and integrate to obtain
    \begin{equation}
        \frac{d}{dt}\int_{\RR^N} |u|^p = -p \mathcal{E}(u|u|^{p-2} ,u) \leq 0.
    \end{equation}
    For the last inequality, we simply used that~$(a-b)(\varphi(a) - \varphi(b)) \geq 0$ for any nondecreasing function~$\varphi$. 

    The case~$1< p < 2$ is slightly more involved. Consider~$u_0 \in L^2(\RR^N) \cap L^\infty(\RR^N)$, and take an increasing function~$\varphi\in \cont^1(\RR)$ such that~$\varphi(0) = 0$. By~\Cref{re:betterfunction}, we have that~$\varphi(u) \in H_\mathcal{L}(\RR^N)$. With the same argument as before we obtain that
    \begin{equation}
        \frac{d}{dt}\int_{\RR^N} u \varphi(u) \leq 0 \implies \int_{\RR^N} u(t) \varphi(u(t)) \leq \int_{\RR^N} u_0 \varphi(u_0).
    \end{equation}
    Now we simply take~$\varphi(s)$ as a suitable approximation of~$s|s|^{p-2}$, i.e., with~$\varphi'(0)$ finite. Finally, we can avoid the~$L^\infty(\RR^N)$ hypothesis via another approximation.
    
    Continuity follows by interpolation using the~$L^2(\RR^N)$ continuity and either the~$L^\infty(\RR^N)$ or~$L^1(\RR^N)$ boundedness.
\end{proof}

\section*{Acknowledgements}

\textsc{A. de Pablo}, \textsc{F. Quir\'os} and \textsc{J. Ruiz-Cases} were supported by grants PID2020-116949GB-I00, PID2023-146931NB-I00, RED2022-134784-T, RED2024-153842-T and CEX2023-001347-S, all of them funded by MICIU/AEI/10.13039/501100011033.

\textsc{J. Ruiz-Cases} would also like to thank \textsc{X. Ros-Oton} for the fruitful discussions related to~\Cref{re:optimalweak} during a visit to Universidad de Barcelona funded by RED2024-153842-T.


\end{document}